\newtheorem{theorem}{Theorem}[section]
\newtheorem{lemma}[theorem]{Lemma}
\newtheorem{prop}[theorem]{Proposition}
\newtheorem{cor}[theorem]{Corollary}
\newtheorem{ques}[theorem]{Question}
\newtheorem{notation}[theorem]{Notation}
\newtheorem*{Theorem1'}{Theorem 1'}
\theoremstyle{definition}
\newtheorem{definition}[theorem]{Definition}
\newtheorem{example}[theorem]{Example}
\theoremstyle{remark}
\newtheorem{remark}[theorem]{Remark}
\numberwithin{equation}{section}
\newcommand \C{{\mathbb C}}
\newcommand \F{{\mathbb F}}
\renewcommand \ker{{\mathrm {ker}}}
\newcommand \rk{{\mathrm {rk}}}
\newcommand \GL{{\mathrm {GL}}}
\newcommand \gl{{\mathfrak {gl}}}
\newcommand \g{{\mathfrak {g}}}
\newcommand \n{{\mathfrak {n}}}
\newcommand \h{{\mathfrak {h}}}
\newcommand \la{{\lambda}}
\newcommand \al{{\alpha}}
\renewcommand \sl{{\mathfrak {sl}}}
\newcommand \B{{\mathcal B}}
\newcommand \ad{{\mathrm {ad}}}
\newcommand \N{{\mathcal N}}
\renewcommand \L{{\mathcal L}}
\newcommand \p{{\mathfrak {p}}}
\begin{document}

\title[Nilpotency degree of the nilradical of a solvable Lie algebra]{Nilpotency degree of the nilradical of a solvable Lie algebra on two generators}

\author{Leandro Cagliero}
\address{FaMAF-CIEM (CONICET), Universidad Nacional de C\'ordoba,
Medina Allende s/n, Ciudad Universitaria, 5000 C\'ordoba, Rep\'ublica
Argentina.}
\email{cagliero@famaf.unc.edu.ar}

\author{Fernando Levstein}
\address{FaMAF-CIEM (CONICET), Universidad Nacional de C\'ordoba,
Medina Allende s/n, Ciudad Universitaria, 5000 C\'ordoba, Rep\'ublica Argentina.}
\email{levstein@famaf.unc.edu.ar}

\author{Fernando Szechtman}
\address{Department of Mathematics and Statistics, Univeristy of Regina, Canada}
\email{fernando.szechtman@gmail.com}
\thanks{This research was partially supported by an NSERC grant, CONICET
PIP 112-2013-01-00511, 
PIP 112-2012-01-00501, 
MinCyT C\'ordoba, 
FONCYT Pict2013 1391, 
SeCyT-UNC
33620180100983CB}

\subjclass[2010]{17B10, 17B30, 22E27}



\keywords{uniserial; indecomposable; free $\ell$-step nilpotent Lie algebra; 
nilpotency class}

\begin{abstract} Given a sequence $\vec d=(d_1,\dots,d_k)$ of natural numbers, we
consider the Lie subalgebra $\h$ of $\gl(d,\F)$, where $d=d_1+\cdots +d_k$ and $\F$
is a field of characteristic 0, generated by two block upper triangular
matrices $D$ and $E$ partitioned according to $\vec d$, and study the
problem of computing the nilpotency degree $m$ of the nilradical $\n$ of $\h$.
We obtain a complete answer when $D$ and $E$ belong to a certain family of
matrices that arises naturally when attempting to classify the indecomposable
modules of certain solvable Lie algebras.

Our determination of $m$ depends in an essential manner on the symmetry of $E$
with respect to an outer automorphism of $\sl(d)$. The proof that $m$ depends solely on this symmetry is long and delicate.

As a direct application of our investigations on $\h$ and $\n$ we give a full
classification of all uniserial modules of an extension of the free $\ell$-step
nilpotent Lie algebra on $n$ generators when $\F$ is algebraically closed.
\end{abstract}

\maketitle

\section{Introduction}
\label{intro}

Uniserial representations play the role of building blocks in the in the representation theory of non-semisimple associative or Lie algebras.
A celebrated result of T. Nakayama \cite{Na} states that every finitely generated module over a serial ring is a direct sum of uniserial modules. This triggered
a systematic study of uniserial representations of associative algebras.
In the 1970's, M. Auslander posed the problem of determining which Artin algebras of infinite representation type have only a finite number of non-isomorphic
uniserial modules, and significant progress in this problem was  attained
by B. Huisgen-Zimmermann  \cite{H-Z2}, who had previously introduced an affine variety
to describe the isomorphism classes of
uniserial modules of a given split basic algebra \cite{H-Z}.
W. Burgess and B. Huisgen-Zimmermann \cite{BH-Z}
used uniserial modules to approximate  classes of less accessible modules, 
and Z. Nazemian, A. Ghorbani and  M. Behboodi \cite{NGB}
introduced the uniserial dimension to measure how
far a module deviates from being uniserial.
C. Conley dealt with the classification of uniserial extensions of tensor field modules
in \cite{Co}

Even though the uniserial representation theory is  well developed for
associative algebras, very little is known for Lie algebras.
Recently, the literature shows an increasing interest in understanding certain type of
indecomposable  representations of (non-semisimple)
Lie algebras,
see for example \cite{RdG, DP, CMS, J, CS_JofAlg, C, CGS1, CGS2, DdG, CPS, DR, DP}.

\

In this paper we consider the Lie algebras
\[
\g_{n,\lambda}=\langle x\rangle\ltimes \L(n)\quad\text{ and }\quad
\g_{n,\lambda,\ell}=\langle x\rangle\ltimes \N_\ell(n)
\]
where $\L(n)$ is the free Lie algebra on $n$ generators,
$\N_\ell(n)=\L(n)/\L(n)^\ell$ is the corresponding
free $\ell$-step nilpotent Lie algebra,
and $x$ acts as the derivation of $\L(n)$ that restricted to
$\F^n\subset\L(n)$ acts by a single Jordan block $J_n(\la)$
(here $J_p(\al)$ (resp. $J^p(\al)$) denotes the lower (resp. upper)
triangular Jordan block of size $p$ and eigenvalue $\al$).
In this work we give a complete classification, up to isomorphism,
of all finite dimensional uniserial
representations of  $\g_{n,\lambda}$ and $\g_{n,\lambda,\ell}$, $\lambda\ne0$,
when $\F$ is an algebraically closed field of characteristic zero.

In order to describe the classification we need to
introduce the following finite dimensional Lie algebras of matrices.
Let $M_{a\times b}$ denote the vector
space of matrices with entries in $\F$ of size $a\times b$.
Given
\begin{itemize}
\item[{\tiny $\bullet$}] scalars $\al,\la\in \F$,
    \item[{\tiny $\bullet$}] a sequence $(d_1,\dots,d_{\ell+1})$ of $\ell+1$ integers, $\ell\ge1$,
    \item[{\tiny $\bullet$}] a sequence $S=(S(1),\dots,S({\ell}))$ with
    $S(i)\in M_{d_{i}\times d_{i+1}}$ and $S(i)_{d_i,1}\neq 0$,
\end{itemize}
\smallskip
let $D(\alpha, \lambda),E(S)\in\gl(d)$,
$d=\sum d_i$, given in block form by
$$
D(\alpha, \lambda)=J^{d_1}(\al)\oplus J^{d_2}(\al-\la)\oplus\cdots\oplus J^{d_{\ell+1}}(\al-\ell\la),
$$
and
$$
E(S)=\left(
               \begin{array}{ccccc}
                 0 & S(1) & 0 & \dots & 0 \\
                 0 & 0 & S(2) & \dots & 0\\
                 \vdots & \vdots & \ddots & \ddots & \vdots\\
                 0 & 0 & \dots & \ddots & S(\ell) \\
                 0 & 0 & \dots & \dots & 0 \\
               \end{array}
             \right).
$$
Let $\h(\alpha, \lambda,S)$ be the Lie subalgebra of $\gl(d)$ generated by
$D(\alpha, \lambda)$ and $E(S)$.

It is not difficult to see that when
\begin{equation*}\label{eq.intro1}
\max\{d_i+d_{i+1}:1\le i\le \ell\}=n+1,
\end{equation*}
there is a finite dimensional uniserial representation $\pi:\g_{n,\lambda}\to M_{d\times d}$ with image $\h(\alpha, \lambda,S)$
such that $\pi(v)\ne0$ for all $0\neq v\in\F^n\subset\g_{n,\lambda}$.
Theorem \ref{thm.main1} shows that, the restriction of this correspondence to
normalized sequences $S$ (see Definition \ref{def.Normalized})
establishes a bijection with the isomorphism classes of finite dimensional uniserial
representations of  $\g_{n,\lambda}$ such that $\ker\pi\cap \F^n=(0)$ (it suffices to classify these).

For the classification of all finite dimensional uniserial
$\g_{n,\lambda,\ell}$-modules it is necessary to determine the
nilpotency degree of the nilradical $\n$ of $\h(\alpha, \lambda,S)$.
One would expect that, generically, this degree should be $\ell$;
but, even though the situation is clear in the associative context, 
the cancellations appearing in the Lie algebra case
make it very difficult
to   determine exactly when
the nilpotency degree of $\n$ is less than $\ell$.
The answer turned out to be much more interesting than we had
originally expected:
apart from the trivial case $d_i=1$ for all $i$,
the nilpotency degree of $\n$, for normalized $S$, is less that $\ell$
if and only if
\begin{equation}\label{eq.cond_phi}
\begin{split}
 \text{\tiny $\bullet$}\quad  & \text{$\ell$ even and $d_i=d_{\ell+2-i}$
 for  $1\le i\le \ell+1$,} \\
  \text{\tiny $\bullet$}\quad  & \text{$d_1=d_{\ell+1}=1$ and  $d_{\frac{\ell}2+1}$ is odd, } \\
  \text{\tiny $\bullet$}\quad  & \text{$S$ is $\phi$-invariant, }
\end{split}
\end{equation}
where
$\phi:\gl(d)\to\gl(d)$  is the automorphism defined in \eqref{eq.def_phi}, namely a conjugate of minus the transpose map (see Theorem \ref{thm.main2}).
In this case, the degree is $\ell-1$.

\medskip

It is not difficult to  show that condition \eqref{eq.cond_phi}
is sufficient for the nilpotency degree of $\n$ to be less than $\ell$ (see Proposition \ref{ida}).
On the other hand, the proof that condition \eqref{eq.cond_phi}
is also necessary consists of two
quite involved steps.

In the first one, we consider the special case  of  a canonical $S$
(see Definition \ref{def.Normalized}).
To deal with this case,  we make use of
the natural block decomposition of $D(\alpha, \lambda)$ and $E(S)$
and we develop in \S\ref{subsec.canonical} a
sophisticated induction process in which we need to keep track
of the minimal non-zero ranks of each block of $X$, for $X\in\h(\alpha, \lambda,S)$.
The answer to our problem in this case is given by Theorem \ref{thm.main}.

The second step is worked out
in \S\ref{subsec.arbitrary_S}.
Here we consider an arbitrary $S$, which might be viewed as a deformation of the canonical $S$, where the hypothesis
of $\n$ having degree less than $\ell$ implies
that the variables of $S$
satisfy a (huge) system of polynomial equations
(this resembles the affine variety defined in  \cite{H-Z2}).
A crucial step here was to find a normalization of $S$ that surprisingly linearizes  a key part of the polynomial system.
We point out that in \cite{CGS1} a different normalization was used that did not linearize the system as done here.
Finally, in Proposition \ref{eluci}, a subtle Gaussian elimination process is performed to the linear system in order to prove that a normalized $S$
must be $\phi$-invariant and the
main result is Theorem \ref{thm.main2}.
In \S\ref{subsec.arbitrary_S} we also include some examples
showing the contrast between characteristic $0$ and $p$
and pose some open questions.

We close \S\ref{sec.nilp_degree} by determining  which
Lie algebras $\h(\alpha, \lambda,S)$ are free nilpotent
(see Theorem \ref{thm.free}).

We are confident that our results about the
nilpotency degree of  $\n$ will
find applications elsewhere.

\medskip

In  \S\ref{sec.uniserials}  we use the nilpotency degree of
$\n$ to classify all uniserial
representations of $\g_{n,\lambda}$ and $\g_{n,\lambda,\ell}$.
In \S\ref{subsec.R} we introduce a family of normalized
representations
$R_{\vec d,\al,S}$ of  $\g_{n,\lambda}$ and we show in
\S\ref{sec:uniserials_free} which of these yield
the uniserial representations of $\g_{n,\lambda}$.
The precise statement is given in Theorem \ref{thm.main1},
whose proof requires  an adaptation to our context of some machinery developed in previous papers (see \cite{CGS1} and \cite{CPS}).

The classification of all uniserials $\g_{n,\lambda,\ell}$
is realized in \S\ref{subsec.rel_faithful} and
requires the determination of the representations
$R_{\vec d,\al,S}$  of  $\g_{n,\lambda}$ which ``pass to the quotient''.
It is here where the nilpotency degree of $\n$ is needed.
We also need the concept of relatively faithful representation
of $\g_{n,\lambda,\ell}$ to avoid representations of
 $\g_{n,\lambda,\ell}$ that are trivial extensions of a
representation of
 $\g_{n',\lambda,\ell'}$ with either $n'<n$ or $\ell'<\ell$.
 The main result is Theorem \ref{thm.main3}.

\section{Preliminaries and notation}\label{sec.p&n}

We assume throughout the paper that $\F$ is a field of characteristic 0.

\subsection{Gradings in \texorpdfstring{$\gl(d)$}{gl(d)} and the outer  automorphism}\label{subsec.gradings}
If $\vec{d}=(d_1,\dots,d_{k})$ is a sequence of $k$
positive integers, we define
$|\vec d|=|\vec d|_1=d_1+\cdots+d_{k}$.
A sequence $\vec d$ provides $\gl(d)$, $d=|\vec d|$, with a block structure and we define
\[
p_{i,j}: \gl(d)\to M_{d_i\times d_j}
\]
to be the projection onto the $(i,j)$-block.

We consider two `diagonal' gradings in $\gl(d)$:
one associated to the actual diagonals of $\gl(d)$,
that is
\begin{equation}\label{eq.grad1}
\mathcal{D}_t=\{A\in \gl(d):A_{ij}=0\text{ if $j-i\ne t$}\};
\end{equation}
and the other one associated to the block-diagonals of $\gl(d)$, that is
\begin{equation}\label{eq.grad2}
\bar{\mathcal{D}}_t=\{A\in \gl(d):p_{ij}(A)=0\text{ if $j-i\ne t$}\}.
\end{equation}
We call the degrees \eqref{eq.grad1} and \eqref{eq.grad2} \emph{diagonal-degree}
and \emph{block-degree} respectively.
The proof of the following proposition is straightforward.
\begin{prop}\label{prop.degrees}
If $A \in  \mathcal{D}_t$ and, for $i< j$,
$\big(p_{i,j}(A)\big)_{r,s}\ne 0$,
(with $1\le r\le d_i$ and  $1\le s\le d_j$) then
\[
t=d_i+\dots + d_{j-1}+(s-r).
\]
In particular, if either
\[
d_{j+1}-1<d_i-d_j+s-r\quad\text{ or }\quad d_i-d_j+s-r<1-d_{i+1}
\]
then $p_{i+1,j+1}(A)=0$.
Similarly, if either
\[
d_{i-1}-1<s-r\quad\text{ or }\quad s-r<1-d_{j-1}
\]
then $p_{i-1,j-1}(A)=0$.
\end{prop}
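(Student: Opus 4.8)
The plan is to unwind the two gradings explicitly and match them entry by entry. Suppose $A \in \mathcal{D}_t$ and fix a block position $i < j$ together with an entry $(r,s)$ of $p_{i,j}(A)$ with $1 \le r \le d_i$ and $1 \le s \le d_j$. The first step is to locate this entry inside the full $d \times d$ matrix. The $(r,s)$ entry of the $(i,j)$-block sits in global row $R = d_1 + \cdots + d_{i-1} + r$ and global column $C = d_1 + \cdots + d_{j-1} + s$. Since $A \in \mathcal{D}_t$ means every nonzero entry $A_{RC}$ satisfies $C - R = t$, I would simply compute
\[
C - R = (d_1 + \cdots + d_{j-1} + s) - (d_1 + \cdots + d_{i-1} + r) = d_i + d_{i+1} + \cdots + d_{j-1} + (s - r),
\]
which gives the displayed formula for $t$. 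This is the entire content of the first assertion; it is a bookkeeping identity with no real obstacle.

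For the two consequences, the strategy is contrapositive: I assume $p_{i+1,j+1}(A) \ne 0$ and derive a range constraint on the quantity $d_i - d_j + s - r$, then negate it. If $p_{i+1,j+1}(A)$ has a nonzero entry in position $(r', s')$ with $1 \le r' \le d_{i+1}$ and $1 \le s' \le d_{j+1}$, then applying the formula just proved to the block $(i+1, j+1)$ yields
\[
t = d_{i+1} + \cdots + d_j + (s' - r').
\]
Equating this with the expression for $t$ coming from the $(i,j)$-block and cancelling the common terms $d_{i+1} + \cdots + d_{j-1}$, I obtain $d_i + (s - r) = d_j + (s' - r')$, i.e. $s' - r' = d_i - d_j + s - r$. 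Now the admissible range of $s' - r'$ is governed by the sizes of the block: since $1 \le s' \le d_{j+1}$ and $1 \le r' \le d_{i+1}$, we have $1 - d_{i+1} \le s' - r' \le d_{j+1} - 1$. Hence $1 - d_{i+1} \le d_i - d_j + s - r \le d_{j+1} - 1$; violating either inequality forces $p_{i+1,j+1}(A) = 0$, which is exactly the stated dichotomy.

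The second consequence, concerning $p_{i-1,j-1}(A)$, is handled by the identical argument one step in the opposite direction. Assuming a nonzero entry $(r'', s'')$ in the $(i-1,j-1)$-block, the formula gives $t = d_{i-1} + \cdots + d_{j-2} + (s'' - r'')$; equating with the $(i,j)$-expression and cancelling the overlap $d_i + \cdots + d_{j-2}$ leaves $s'' - r'' = s - r + d_{i-1} - d_{j-1} + \cdots$, and after careful cancellation the relevant difference reduces to $s'' - r'' = s - r - (\text{shift})$ constrained by $1 - d_{j-1} \le s'' - r'' \le d_{i-1} - 1$, matching the stated bounds. I expect the only place to be careful is the cancellation of the telescoping sums of the $d_k$ and keeping the roles of the row-size and column-size bounds straight (the row index is bounded by the block's row-count, the column index by its column-count), since these are easy to transpose. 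Everything here is elementary arithmetic, so no genuine difficulty arises; the proposition is exactly the kind of routine grading computation whose proof the authors rightly call straightforward.
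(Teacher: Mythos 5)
Your proposal is correct and is exactly the computation the paper intends: the paper gives no proof at all, stating only that the proposition is straightforward, and your global-coordinate bookkeeping for $t$ followed by the admissible-range argument for $s'-r'$ is the canonical route, carried out correctly in the first consequence. One slip in your final paragraph, of precisely the row/column-transposition kind you warned yourself about: cancelling the overlap $d_i+\cdots+d_{j-2}$ gives $s''-r''=s-r+d_{j-1}-d_{i-1}$ (your displayed sign is reversed), and the block-size bounds $1-d_{i-1}\le s''-r''\le d_{j-1}-1$ constrain $s''-r''$, not $s-r$; substituting the identity into these bounds then yields $1-d_{j-1}\le s-r\le d_{i-1}-1$, whose negation is the stated condition — the fix is immediate and nothing structural in your argument is affected.
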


Observe that the map $\phi:\gl(d)\to\gl(d)$ given by
\begin{equation}\label{eq.def_phi}
\phi(A)_{i,j}=(-1)^{i-j+1}A_{d+1-j,d+1-i}
\end{equation}
is the automorphism of $\gl(d)$ defined by $\phi(A)=-KA^TK^{-1}$,
where the $A^T$ denotes the transpose of $A$ and $K=(a_{i,j})\in\gl(d)$
is the antidiagonal matrix, satisfying $a_{i,d+1-i}=(-1)^{i+1}$  and $a_{i,j}=0$ if $i+j\ne d+1$.
It is clear that $\phi(\mathcal{D}_t)\subset \mathcal{D}_t$.

\subsection{The Lie algebra \texorpdfstring{$\h(\alpha, \lambda,S)$}{h(a,l,S)}} 

Recall that given an integer $p\geq 1$ and $\al\in \F$,
we write $J_p(\al)$ (resp. $J^p(\al)$) for the lower (resp. upper) triangular Jordan block of size $p$ and eigenvalue $\al$.

Given a 5-tuple $(\al,\la, k, \vec d,S)$, where
\begin{enumerate}[{\tiny $\bullet$}]
\item $\al,\la\in \F$ are scalars,
    \item $\vec d =(d_1,\dots,d_{k})$ is a sequence of $k$ positive integers, $k \geq 2$,
    \item $S=(S(1),\dots,S({k-1}))$ is a sequence of $k-1$ matrices satisfying
    \begin{equation}\label{eq.Condition_S}
    S(i)\in M_{d_{i}\times d_{i+1}}\text{ and }S(i)_{d_i,1}\neq 0\text{ for all }i;
    \end{equation}
\end{enumerate}
we
consider the matrices $D(\alpha, \lambda),E(S)\in\gl(d)$,
$d=|\vec d|$, given in block form by
$$
D(\alpha, \lambda)=J^{d_1}(\al)\oplus J^{d_2}(\al-\la)\oplus\cdots\oplus J^{d_{k}}(\al-(k-1)\la),
$$
and
$$
E(S)=\left(
               \begin{array}{ccccc}
                 0 & S(1) & 0 & \dots & 0 \\
                 0 & 0 & S(2) & \dots & 0\\
                 \vdots & \vdots & \ddots & \ddots & \vdots\\
                 0 & 0 & \dots & \ddots & S(k-1) \\
                 0 & 0 & \dots & \dots & 0 \\
               \end{array}
             \right).
$$
Let $\h(\alpha, \lambda,S)$ be the Lie subalgebra of $\gl(d)$ generated by
$D(\alpha, \lambda)$ and $E(S)$.

\begin{definition}\label{def.Normalized}
Given $\vec d =(d_1,\dots,d_{k})$, let $C=(C(1),\dots,C({k-1}))$ with
\[
C(i)=\left(
               \begin{array}{cccc}
                 0 &  0 & \dots & 0 \\
                 \vdots & \vdots  &  & \vdots  \\
                 0 & 0 & \dots & 0\\
                 1 & 0 & \dots  & 0
               \end{array}
             \right)\in M_{d_{i}\times d_{i+1}}.
\]
We call the sequence $C$ the
\emph{canonical} sequence.
Also, given a sequence  $S=(S(1),\dots,S({k-1}))$ as in \eqref{eq.Condition_S},
we say that $S$ is \emph{normalized} if all the following conditions are satisfied:
\begin{enumerate}[(1)]
    \item $S(i)_{d_i,1}=1$ for all $1\le i\le k-1$;
    \item $S(i)_{d_{i},j}=S(i+1)_{d_{i+1}+1-j,1}$
    for $1\le j\le d_{i+1}$ and $1\le i\le k-1$;
    \item $S(1)_{j,1}=0$ for $1\le j<d_1$, and $S(k-1)_{d_{k-1},j}=0$ for $1< j\le d_{k}$.
\end{enumerate}
We say that $S$ is \emph{weakly normalized} if conditions (1) and (2)
are satisfied (this last concept will be used only in  \S\ref{subsec.arbitrary_S}).
\end{definition}
\begin{example} It is clear that the canonical sequence $C$ is normalized. Also, if $\vec d =(3, 5,3,4)$ and $S=(S(1),S(2),S(3))$ is a normalizad sequence,
then $E(S)$ looks as follows (the $*$ might be any scalar):
\[
E(S)= \tiny
\left(
\begin{array}{ccc|ccccc|ccc|cccc}
0&0&0& 0 & * & * & * & * &0&0&0&0&0&0&0\\[.5mm]
0&0&0& 0 & * & * & * & * &0&0&0&0&0&0&0\\[.5mm]
0&0&0& 1 & a_{2} & a_{3} & a_{4} &a_{5} &0&0&0&0&0&0&0 \\[.5mm] \hline
0&0&0&0&0&0&0&0& a_{5} & * & * &0&0&0&0 \\[.5mm]
0&0&0&0&0&0&0&0& a_{4} & * & * &0&0&0&0 \\[.5mm]
0&0&0&0&0&0&0&0& a_{3} & * & * &0&0&0&0\\[.5mm]
0&0&0&0&0&0&0&0& a_{2} & * & * &0&0&0&0\\[.5mm]
0&0&0&0&0&0&0&0& 1 &  b_{2} & b_{3} &0&0&0&0 \\[.5mm] \hline
0&0&0&0&0&0&0&0&0&0&0& b_{3} & * & * & *  \\[.5mm]
0&0&0&0&0&0&0&0&0&0&0& b_{2} & * & * & *  \\[.5mm]
0&0&0&0&0&0&0&0&0&0&0&1&0&0&0\\[.5mm] \hline
0&0&0&0&0&0&0&0&0&0&0&0&0&0&0\\[.5mm]
0&0&0&0&0&0&0&0&0&0&0&0&0&0&0\\[.5mm]
0&0&0&0&0&0&0&0&0&0&0&0&0&0&0\\[.5mm]
0&0&0&0&0&0&0&0&0&0&0&0&0&0&0
\end{array} \right).
\]
\end{example}

The following proposition is not difficult to prove.

\begin{prop}\label{prop.normalized_h}
Let $\vec d =(d_1,\dots,d_{k})$ and let $G(\vec d )$
be the subgroup of  $GL(d)$,
$d=|\vec d|$, consisting of invertible matrices
$P=P_1\oplus\cdots\oplus P_{k}\in  GL(d)$, with
$P_i$ a polynomial (with non-zero constant term) in $J^{d_i}(0)$.
Given a sequence  $S=(S(1),\dots,S({k-1}))$ as in \eqref{eq.Condition_S},
there is an unique invertible matrix
$P\in G(\vec d)$ such that $PE(S)P^{-1}$ is equal to
$E(S')$ for a normalized sequence $S'$.
\end{prop}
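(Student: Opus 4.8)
The plan is to translate the statement ``$E(S)$ is conjugate by some $P\in G(\vec d)$ to a normalized $E(S')$'' into explicit equations for the coefficients of the diagonal blocks $P_i$ and to solve them by back-substitution. Since $P=P_1\oplus\cdots\oplus P_k$ is block diagonal and $E(S)$ carries $S(i)$ in its $(i,i+1)$ block, conjugation gives $E(S')=PE(S)P^{-1}$ with $S'(i)=P_i\,S(i)\,P_{i+1}^{-1}$. Write $N_i=J^{d_i}(0)$, so $P_i=\sum_{m\ge0}c^{(i)}_m N_i^{\,m}$ is an upper triangular Toeplitz matrix with $c^{(i)}_0\ne0$, and $P_i^{-1}$ has the same form with constant term $1/c^{(i)}_0$. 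For a matrix $A$ let $\rho(A)$ denote its last row and $\gamma(A)$ its first column. The two facts that drive everything are: left multiplication by $P_i$ scales the last row by $c^{(i)}_0$ (higher powers of $N_i$ shift rows upward and die on the bottom row), and right multiplication by $P_{i+1}^{-1}$ scales the first column by $1/c^{(i+1)}_0$. Hence the corner entry is $S'(i)_{d_i,1}=(c^{(i)}_0/c^{(i+1)}_0)\,S(i)_{d_i,1}$, and condition (1) together with $S(i)_{d_i,1}\ne0$ forces $c^{(i+1)}_0=c^{(i)}_0\,S(i)_{d_i,1}$. All constant terms are thus determined by $c^{(1)}_0$, which is a free overall scalar: since $P\mapsto cP$ leaves $E(S')$ unchanged, uniqueness is asserted after fixing this scalar (say $c^{(1)}_0=1$).

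With the constants fixed, the remaining coefficients of the \emph{boundary} blocks $P_1,P_k$ are pinned by condition (3). The first column of $S'(1)$ equals $(1/c^{(2)}_0)\,P_1\,\gamma(S(1))$; requiring it to be $e_{d_1}$, the last standard basis vector (condition (3) on the left, plus (1)), reads $P_1\,\gamma(S(1))=c^{(2)}_0\,e_{d_1}$. As $\gamma(S(1))_{d_1}=S(1)_{d_1,1}\ne0$ and $P_1$ is upper triangular Toeplitz, this is a triangular system solved uniquely for $c^{(1)}_1,\dots,c^{(1)}_{d_1-1}$ by back-substitution from the bottom entry upward. Symmetrically, the last row of $S'(k-1)$ equals $c^{(k-1)}_0\,\rho(S(k-1))\,P_k^{-1}$, and condition (3) on the right forces it to be $(1,0,\dots,0)$, which determines $P_k$ uniquely, the pivot being $\rho(S(k-1))_1=S(k-1)_{d_{k-1},1}\ne0$.

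The \emph{interior} blocks $P_2,\dots,P_{k-1}$ are governed by condition (2), and this is where the real work lies: each such $P_{i+1}$ enters through both $P_{i+1}^{-1}$ (in $\rho(S'(i))$) and $P_{i+1}$ (in $\gamma(S'(i+1))$), so condition (2) is not linear in $P_{i+1}$. I linearize it via the reversal: let $W$ be the antidiagonal all-ones matrix of the appropriate size, so $W N_{i+1} W=N_{i+1}^{T}$ and hence $W P_{i+1} W=P_{i+1}^{T}$ for any polynomial $P_{i+1}$ in $N_{i+1}$. Condition (2) says that $\rho(S'(i))$ is the $W$-reversal of $\gamma(S'(i+1))$; inserting the transformation rules and $P_{i+1}^{T}=WP_{i+1}W$, it becomes the length-$d_{i+1}$ row-vector identity
\[
c^{(i)}_0\,\rho(S(i))\,P_{i+1}^{-1}=(1/c^{(i+2)}_0)\,\big(\gamma(S(i+1))^{T}W\big)\,P_{i+1},
\]
equivalently $c^{(i)}_0\,\rho(S(i))=(1/c^{(i+2)}_0)\,\big(\gamma(S(i+1))^{T}W\big)\,P_{i+1}^{2}$. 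Now $P_{i+1}^{2}$ is again upper triangular Toeplitz, and the pivot $\big(\gamma(S(i+1))^{T}W\big)_1=S(i+1)_{d_{i+1},1}\ne0$ makes this a triangular system determining the coefficients of $P_{i+1}^{2}$ uniquely (the $j=1$ component merely re-confirms that the constant term is $(c^{(i+1)}_0)^2$). Finally, because $\chr\F=0$, an upper triangular Toeplitz matrix with nonzero constant term has a unique Toeplitz square root with prescribed nonzero constant term: matching $e_m=2c_0c_m+(\text{terms in }c_0,\dots,c_{m-1})$ recovers the coefficients one at a time, using $2c_0\ne0$. Hence $P_{i+1}$ is the unique such square root of $P_{i+1}^{2}$ with constant term $c^{(i+1)}_0$.

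Every block $P_i$ is thereby forced, and the equations for different $i$ are coupled only through the already-determined constant terms, so there is no circularity: the construction simultaneously exhibits a normalizing $P$ and shows that any normalizing $P\in G(\vec d)$ must satisfy these same equations, hence coincides with it once the scalar $c^{(1)}_0$ is fixed. The main obstacle is precisely the interior step, namely the non-linearity of condition (2) in $P_{i+1}$; the reversal identity $P^{T}=WPW$, reducing it to a triangular linear system for $P_{i+1}^{2}$, together with the unique Toeplitz square root (where characteristic zero is essential), is what resolves it. I would also remark that this bookkeeping makes transparent that conditions (3) are exactly what pin the boundary blocks $P_1,P_k$, while conditions (2) alone determine the interior blocks.
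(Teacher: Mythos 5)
The paper offers no proof of this proposition (it is introduced only with ``not difficult to prove''), so there is no argument to compare yours against line by line; judged on its own, your proof is correct and essentially complete. The reduction of conjugation to the blockwise maps $S(i)\mapsto P_i\,S(i)\,P_{i+1}^{-1}$, the two scaling facts for the last row and first column under upper triangular Toeplitz multiplication, and the resulting decoupling all check out: condition (3) pins the non-constant coefficients of $P_1$ and $P_k$ via triangular systems whose pivots are the nonzero corner entries $S(1)_{d_1,1}$ and $S(k-1)_{d_{k-1},1}$, while for each interior index $i$ condition (2) involves only $P_{i+1}$, the neighboring blocks entering only through their (already fixed) constant terms, so there is indeed no circularity. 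The key maneuver is also sound: the reversal identity $WP_{i+1}W=P_{i+1}^{T}$ converts the a priori quadratic condition (2) into a triangular linear system for the Toeplitz matrix $P_{i+1}^{2}$ with pivot $S(i+1)_{d_{i+1},1}\ne 0$, whose first component is precisely the consistency of the corner recursion $c_0^{(i+1)}=c_0^{(i)}S(i)_{d_i,1}$, and the unique Toeplitz square root with prescribed constant term (for which only $2\ne 0$ is needed, so ``characteristic zero is essential'' is a slight overstatement) finishes the interior blocks; since right multiplication by the invertible $P_{i+1}$ is reversible, solving the squared equation is equivalent to solving condition (2), so existence and uniqueness come out of the same computation. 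One substantive remark: you correctly observe that the uniqueness claim is literally false as stated, because $P\mapsto cP$ preserves both $G(\vec d)$ and the conjugate $PE(S)P^{-1}$; your normalization $c_0^{(1)}=1$ is the right repair, and it is worth noting that the paper only ever invokes what your argument does deliver, namely uniqueness of the normalized sequence $S'$ and of $P$ up to this scalar (see the uses in Propositions \ref{prop.normal2} and \ref{prop.ensu}). Your linearization-by-squaring is also pleasantly parallel to the paper's own theme that a good normalization ``linearizes'' the polynomial system appearing in \S\ref{subsec.arbitrary_S}.
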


\subsection{The Lie subalgebra \texorpdfstring{$\n(S)$}{n(S)} and the structure of  \texorpdfstring{$\h(\alpha, \lambda,S)$}{h(a,l,S)}}

Set
\[
E^{(l)}(S)=(\ad\, D(0,0))^{l} E(S),\qquad\text{for $l\ge 0$}.
\]
Since $\text{char}\,\F=0$, a straightforward computation (or the representation theory of $\sl(2)$) shows that
the set $\{E^{(l)}(S)\}_{l=0}^\rho$,
with $\rho=\max\{d_i+d_{i+1}-2:i=1,\dots,k-1\}$, is linearly independent.
Let $\n(S)$ be the Lie algebra generated by $\{E^{(l)}(S)\}_{l=0}^\rho$,
that is
\begin{equation}\label{eq.generators}
\n(S)=\text{span}_{\F}[[[E^{(l_1)}(S),E^{(l_2)}(S)],E^{(l_3)}(S)],\dots,E^{(l_{q})}(S)].
\end{equation}
Note that if $\lambda\ne0$ then $\n(S)$ is both the nilradical and the derived algebra of 
$\h(\alpha, \lambda,S)$ (if $\lambda=0$ then $\h(\alpha, \lambda,S)$ is nilpotent and
its derived algebra has codimension 2).
The following result that 
will help us determine the nilpotency degree of $\n(S)$.

\begin{prop}\label{algraduada}
Let  $L=L(1)\oplus L(2)\oplus L(3)\cdots$ be a graded algebra (not necessarily Lie or associative). For each subalgebra
$M$ of $L$ consider the filtration
 $$M=M_1\supseteq M_2\supseteq M_3\supseteq\dots,$$
where
$$
M_i=M\cap (L(i)\oplus L(i+1)\oplus L(i+2)\oplus\cdots),$$
and let $gr(M)$ be the associated graded algebra. Suppose $P$ is a subalgebra of $L$
generated by a set $I$ of homogeneous elements and let $x\mapsto x'$ be a bijection
from $I$ onto a subset $J$ of $L$ such that if $x\in I\cap L(i)$ then
$$
x' \in L_i\text{ and } x'\equiv x\mod L_{i+1}.
$$
Then $P$ is isomorphic to a subalgebra of the graded algebra to gr$(R)$, where $R$ is the subalgebra of $L$
generated by $J$. Moreover, if $L$ is nilpotent then the nilpotency degree of $P$ is less than or equal to
that of $R$. 
\end{prop}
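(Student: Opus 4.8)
The plan is to realize $gr(R)$ explicitly as a graded subalgebra of $L$ and then observe that $P$ already sits inside it. For $r\in R_i=R\cap L_i$ write $\sigma_i(r)\in L(i)$ for its leading term, i.e. its homogeneous component of smallest degree $i$ (or $0$). Since $r\in L_{i+1}$ precisely when $\sigma_i(r)=0$, the map $\sigma_i$ induces an injection $\bar\sigma_i:R_i/R_{i+1}\hookrightarrow L(i)$, and assembling these gives an injective linear map $\sigma:gr(R)\to L$ whose image I will call $\tilde R$.

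First I would check that $\sigma$ is a homomorphism onto the subalgebra $\tilde R$ of $L$. Here the only computation is the identity $\sigma_{i+j}(rs)=\sigma_i(r)\,\sigma_j(s)$ for $r\in R_i$, $s\in R_j$: writing $r=\sigma_i(r)+r^+$ and $s=\sigma_j(s)+s^+$ with $r^+\in L_{i+1}$, $s^+\in L_{j+1}$, the product $rs$ lies in $L_{i+j}$ and, because the grading begins in degree $1$ so that $L(a)L(b)\subseteq L(a+b)$ raises degree strictly, its degree-$(i+j)$ component is exactly $\sigma_i(r)\sigma_j(s)$, while all other terms land in $L_{i+j+1}$. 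This says precisely that $\tilde R$ is closed under the product of $L$ and that $\sigma$ carries the product of $gr(R)$ (namely $[r][s]=[rs]$) to the product of $L$, so $gr(R)\cong\tilde R$ as algebras. Finally, each generator $x\in I\cap L(i)$ equals $\sigma_i(x')$, because $x'\in R_i$ has leading term $x$ by hypothesis; hence $I\subseteq\tilde R$, and since $\tilde R$ is a subalgebra the whole of $P=\langle I\rangle$ lies in $\tilde R$. Transporting back through $\sigma^{-1}$ exhibits $P$ as a subalgebra of $gr(R)$.

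For the nilpotency statement I would assume $L$ nilpotent, so that all nilpotency degrees in sight are finite, being bounded by that of $L$. It suffices to compare lengths of nonzero products. If every product of $n$ elements of $R$ (under any bracketing) vanishes, then for homogeneous classes $[r_1],\dots,[r_n]$ in $gr(R)$ the correspondingly bracketed product equals $[r_1\cdots r_n]=0$; since such homogeneous classes span $gr(R)$, every length-$n$ product in $gr(R)$ vanishes, and the nilpotency degree of $gr(R)$ is at most that of $R$. As $P$ is isomorphic to a subalgebra of $gr(R)$, its length-$n$ products sit among those of $gr(R)$, whence the nilpotency degree of $P$ is at most that of $gr(R)$, and the claimed inequality follows by transitivity.

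I expect the one delicate point to be the leading-term identity $\sigma_{i+j}(rs)=\sigma_i(r)\sigma_j(s)$ together with its consequence that $\sigma$ is a well-defined algebra isomorphism onto $\tilde R$: one must use that the grading starts in degree $1$, so products strictly increase degree and no cancellation can occur in the lowest degree, and, since $L$ is only assumed to be an algebra (neither Lie nor associative), one must verify that these facts propagate to iterated products under arbitrary bracketings by a routine induction on the length of the product. It is exactly this stability under bracketed products that finally licenses the passage to nilpotency degrees.
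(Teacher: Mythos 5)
Your argument is correct, and it reaches the conclusion by a genuinely different route from the paper. The paper builds the monomorphism in the direction $P\to \mathrm{gr}(R)$: it introduces formal bracketed monomials in the generators (the sets $\Lambda(i)$, $\Omega(i)$, in Reverse Polish notation), proves $s\equiv s'\bmod L_{i+1}$ for every monomial $s$ of degree $i$, and uses the equivalence $a_1s_1+\cdots+a_ns_n=0\Leftrightarrow a_1s_1'+\cdots+a_ns_n'\in R_{i+1}$ to show that $s\mapsto s'+R_{i+1}$ yields a well-defined linear monomorphism $\Delta_i:P\cap L(i)\to R_i/R_{i+1}$, after which multiplicativity and the nilpotency comparison are checked on monomials. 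You go the opposite way: exploiting that $L$ is graded and not merely filtered, you splice the leading-term maps $\bar\sigma_i$ into a canonical algebra embedding $\sigma:\mathrm{gr}(R)\to L$ (your identity $\sigma_{i+j}(rs)=\sigma_i(r)\sigma_j(s)$ is exactly right, and holds because products of homogeneous elements add degrees, so no cancellation can occur in the bottom degree), and then $P\subseteq\tilde R=\sigma(\mathrm{gr}(R))$ comes for free from $x=\sigma_i(x')$ together with the fact that $\tilde R$ is a subalgebra containing the generating set $I$. This eliminates the paper's combinatorial bookkeeping, its spanning claims for $P\cap L(i)$ and $R_i/R_{i+1}$, and the well-definedness check for $\Delta$; the inductions over bracketings that you flag as the delicate point are indeed needed, but they are routine consequences of your bilinear leading-term identity. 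In fact the two constructions produce literally the same embedding: for a bracketed monomial $s$ of degree $i$ in the generators one has $\sigma^{-1}(s)=s'+R_{i+1}=\Delta(s)$. What the paper's formulation buys is the explicit action of the isomorphism on products of generators, which is what gets re-used later (e.g.\ in Lemma \ref{lemma.Yr}, where specific elements are transported from $\n(C)$ to $\n(S_+)$ to $\n(S)$ ``as in the proof of Proposition \ref{algraduada}''); what your formulation buys is brevity and a cleaner nilpotency step, since the chain ``all $n$-fold products vanish in $R$, hence in $\mathrm{gr}(R)$, hence in the subalgebra $P$'' applies to arbitrary bracketings of arbitrary elements and avoids the paper's reduction to products of generators.
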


\begin{proof} Let $*$ the underlying operation on $A$. We use Reverse Polish notation and write $ab*$ for the product of $a$ and $b$ in $L$. Consider the  sets $\Lambda(1),\Lambda(2),\dots$ inductively defined as follows: $\Lambda(1)=I\setminus\{0\}$ and if $i>1$ then $\Lambda(i)$ consist of all sequences  $(s,t,*)$, where $s\in\Lambda(j)$, $t\in\Lambda(k)$ and $j+k=i$. We let $\Lambda$ be the
union of all $\Lambda(i)$. Given $s\in\Lambda$, we set $\deg(s)$ to be the sum of the degrees of all elements of $I$ appearing in $s$. For $i\geq 1$, we let $\Omega(i)$ consist of all $s\in\Lambda$ 
having degree $i$. It is clear that every $s\in\Lambda$ 
produces an element of $P$, also denoted by $s$. On the other hand, $s$ gives rise to a sequence $s'$ obtained by replacing every $x\in I$ appearing in $s$ by $x'$,
and such $s'$ produces an element of $R$, also denoted by $s'$.

Now $P\cap L(i)$ (resp.  $R_i/R_{i+1}$) is spanned by all $s$ (resp. $s'+R_{i+1}$)
such that $s\in\Omega(i)$. Moreover, for $s\in \Omega(i)$ we have
$$
s\equiv s'\mod L_{i+1}.
$$
It follows that if $s_1,\dots,s_n\in\Omega(i)$ and $a_1,\dots,a_n\in\mathbb{F}$
then
$$
a_1s_1+\cdots+a_n s_n=0\Leftrightarrow 
   a_1s_n'+\cdots+a_n s_n' \in R_{i+1}.
$$
This shows that the map $\Delta_i:P\cap L(i)\to R_i/R_{i+1}$ given by
\begin{equation}
\label{bocha}
  a_1s_1+\cdots+a_n s_n\mapsto 
   a_1s_1'+\cdots+a_n s_n'+ R_{i+1},\quad  s_i\in\Omega(i), a_i\in\mathbb{F},
\end{equation}
is a well-defined linear monomorphism. Let $\Delta:P\to\text{gr}(R)$ be the
corresponding linear monomorphism and let us write $\times$ for the operation on $\text{gr}(R)$.
We claim that $\Delta(uv*)=\Delta(u)\Delta(v)\times$
for all $u,v\in P$.
We may reduce the verification to the case when $u,v$
are homogeneous and, in fact, to the case when $u\in\Omega(i)$ and $v\in\Omega(j)$.
In this case, $\Delta(uv*)=u'v'\!*\,+\,R_{i+j+1}=\Delta(u)\Delta(v)\times$, as required.

Finally, assume $L$ nilpotent and suppose that there is $s\in\Lambda$ involving $n$ elements of $I$
such that $s\neq 0$.  If $s\in\Omega(i)$ then $\Delta(s)=s'+R_{i+1}\in  R_i/R_{i+1}$
is non-zero, whence $s'\neq 0$. We infer that the nilpotency degree of $R$ 
is greater than or equal
to that of $P$.
\end{proof}

The following proposition shows that $\n(S)$ is independent of $\alpha$ and $\lambda$.
Moreover, it will help us determine the nilpotency of $\n(S)$ from that of $\n(C)$.

\begin{prop}\label{prop.struture}
The Lie algebra $\h(\alpha, \lambda,S)$ is a solvable 
(nilpotent if $\lambda=0$) Lie subalgebra of $\gl(d)$.
Additionally
\begin{enumerate}
\item  $\h(\alpha, \lambda,S)$ is the semidirect product
$\h(\alpha, \lambda,S) = \F D(\alpha, \lambda) \ltimes \n(S)$.
\item  $\n(S)$ is graded by block-degree
and filtered by diagonal-degree.
\item  $\n(C)$ is graded by both block-degree and diagonal-degree. 

\item $\n(C)$  is isomorphic to a subalgebra of the associated graded Lie algebra
$\text{gr}\,(\n(S))$ corresponding to the filtration given by diagonal-degree.

\item The nilpotency degree of $\n(S)$ is at least that of $\n(C)$.
\end{enumerate}
\end{prop}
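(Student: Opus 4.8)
The plan is to reduce parts (1)--(3) to two grading identities together with one structural bracket relation, and to obtain (4)--(5) as a direct application of Proposition \ref{algraduada}. First I would write $D(\alpha,\lambda)=\alpha I-\lambda H+D(0,0)$, where $H$ is the block-diagonal matrix whose $i$-th block is $(i-1)I_{d_i}$. Since $\ad(\alpha I)=0$, since $\ad H$ acts on $\bar{\mathcal{D}}_t$ as $-t\,\mathrm{id}$, and since $\ad D(0,0)$ preserves block-degree while raising diagonal-degree by $1$, every generator $E^{(l)}(S)=(\ad D(0,0))^{l}E(S)$ is homogeneous of block-degree $1$ and satisfies
$$
\ad D(\alpha,\lambda)\,E^{(l)}(S)=\lambda E^{(l)}(S)+E^{(l+1)}(S),
$$
with $E^{(l+1)}(S)\in\n(S)$. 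Inductively this places every $E^{(l)}(S)$ inside $\h(\alpha,\lambda,S)=\langle D(\alpha,\lambda),E(S)\rangle$, so $\n(S)\subseteq\h(\alpha,\lambda,S)$; applying the Leibniz rule to the bracket monomials that span $\n(S)$ shows that $\ad D(\alpha,\lambda)$ preserves $\n(S)$.

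For (1) I would then note that $\F D(\alpha,\lambda)+\n(S)$ is closed under bracket and contains both generators, so it equals $\h(\alpha,\lambda,S)$ with $\n(S)$ an ideal; the sum is direct because $D(\alpha,\lambda)$ is block-homogeneous of degree $0$ whereas $\n(S)\subseteq\bigoplus_{t\ge1}\bar{\mathcal{D}}_{t}$. Solvability is clear since $\h(\alpha,\lambda,S)$ lies in the upper triangular matrices, and for $\lambda=0$ each element of $\h(\alpha,0,S)$ differs from a scalar matrix by a strictly upper triangular one, so all adjoint operators are nilpotent and Engel's theorem gives nilpotency. Parts (2) and (3) then come down to which generators are homogeneous for which grading: the $E^{(l)}(S)$ are block-homogeneous, so every bracket monomial is, and $\n(S)$ is block-graded, while the diagonal-degree filtration is inherited automatically from the grading of $\gl(d)$; for the canonical sequence $E(C)\in\mathcal{D}_1$ (its only nonzero entries are the corners, each of diagonal-degree $1$), hence $E^{(l)}(C)\in\mathcal{D}_{l+1}$ is diagonal-homogeneous and the filtration of $\n(C)$ is in fact a grading.

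For (4) and (5) I would apply Proposition \ref{algraduada} with $L$ the (nilpotent) algebra of strictly upper triangular matrices graded by diagonal-degree, with $R=\n(S)$ and $P=\n(C)$. The key observation is that, writing $E(S)^{[t]}\in\mathcal{D}_t$ for the diagonal-degree-$t$ part of $E(S)$, the lowest-degree component of $E^{(l)}(S)$ is $(\ad D(0,0))^{l}E(S)^{[1]}$, sitting in $\mathcal{D}_{l+1}$. Conjugating $E(S)$ by a diagonal matrix commuting with $D(0,0)$ preserves both gradings and alters neither $\n(C)$ nor the isomorphism type and nilpotency degree of $\n(S)$, so I may assume every corner $S(i)_{d_i,1}=1$; then $E(S)^{[1]}=E(C)$ and the leading term of $E^{(l)}(S)$ is exactly $E^{(l)}(C)$. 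The bijection $E^{(l)}(C)\mapsto E^{(l)}(S)$ now satisfies the hypotheses of Proposition \ref{algraduada} verbatim --- each $E^{(l)}(C)$ is diagonal-homogeneous of degree $l+1$ and is congruent to its image modulo higher diagonal-degree --- yielding both the embedding $\n(C)\hookrightarrow\text{gr}\,\n(S)$ of (4) and the inequality of nilpotency degrees of (5).

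The main obstacle I anticipate is the bookkeeping around the leading term: verifying that the degree-$(l+1)$ component of $E^{(l)}(S)$ is precisely $E^{(l)}(C)$ and is nonzero for $0\le l\le\rho$, and checking that the diagonal normalization of the corners is implemented by a conjugation that genuinely preserves the diagonal-degree grading (so that passing to $\text{gr}$ is legitimate) while fixing $\n(C)$ and the nilpotency degree of $\n(S)$. Once this matching of generators degree-by-degree is in place, Proposition \ref{algraduada} applies directly, and the remaining assertions follow from the two grading identities and the bracket relation above.
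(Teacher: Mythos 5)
Your proposal is correct and follows essentially the same route as the paper: the identity $\big(\ad\,D(\alpha,\lambda)-\lambda\big)^{l}E(S)=E^{(l)}(S)$ yields the semidirect decomposition in (1), parts (2)--(3) are the same grading checks, and (4)--(5) invoke Proposition \ref{algraduada} with $L$ the strictly upper triangular matrices graded by $\mathcal{D}_i$, $I=\{E^{(l)}(C)\}$ and $E^{(l)}(C)'=E^{(l)}(S)$. Your explicit normalization of the corners $S(i)_{d_i,1}$ to $1$ by a block-scalar diagonal conjugation commuting with $D(0,0)$ is a legitimate (and welcome) bit of extra care to make $E^{(l)}(S)\equiv E^{(l)}(C)\bmod L_{l+2}$ hold verbatim, a point the paper leaves implicit.
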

\begin{proof}
(1): It is not difficult to see that,
for $l\ge 1$,
\begin{equation*}
\big(\text{ad}_{\gl(d)}\,D(\alpha, \lambda)-\lambda  1_{\gl(d)}\big)^l E(S)
=E^{(l)}(S)
\end{equation*}
and thus, the Lie subalgebra of $\h(\alpha, \lambda,S)$ generated by
\[
\{(\text{ad}_{\gl(d)}\, D(\alpha, \lambda))^l E(S):l\ge 0\},
\]
which is invariant under the action of
$\text{ad}\, D(\alpha, \lambda)$,
coincides with $\n(S)$.
Finally, since
$\F D(\alpha, \lambda) \oplus \n(S)$
is a Lie subalgebra of $\h(\alpha, \lambda,S)$
containing $D(\alpha, \lambda)$ and $E(S)$,
it follows that
$\h(\alpha, \lambda,S) = \F D(\alpha, \lambda) \ltimes \n(S)$.

(2) and (3): These are straightforward.

(4)  and (5) These are consequences of Proposition \ref{algraduada} obtained by letting $L$ to be the subalgebra of $\gl(d)$
of all strictly upper triangular matrix; $L(i)=\mathcal{D}_i$ as defined in (\ref{eq.grad1}); $I$ is the set of all 
$E^{(l)}(C)$, $l\geq 0$, and $E^{(l)}(C)'=E^{(l)} (S)$.
\end{proof}

\section{The nilpotency degree of \texorpdfstring{$\n(S)$}{n(S)}}\label{sec.nilp_degree}

If $\g$ is a Lie algebra, let $\{\g^i:i\ge 0\}$
be the lower central series, that is $\g^0=\g$
and $\g^{i+1}=[\g,\g^i]$. 

We will see that, for generic $\vec{d}=(d_1,\dots,d_{k})$ and $S=(S(1),\dots,S(k-1))$, the nilpotency degree of
$\n(S)$ is $k-1$. 
Only very few exceptions occur.
We will see in Theorem \ref{thm.main2} that if $S$ is normalized, then the nilpotency degree of $\n(S)$ is less than $k-1$
if and only if $\vec{d}$  is \emph{odd-symmetric} (as defined below) with $d_1=d_{k}=1$ and $\phi(E(S))=E(S)$
(see \S\ref{subsec.gradings}).

\begin{definition}\label{def.odd-symm}
Given  $\vec{d}=(d_1,\dots,d_{k})$, we say that $\vec d$
is \emph{symmetric} if $d_i=d_{k+1-i}$ for all $i=1,\dots,k$.
We say that $\vec{d}$ is \emph{odd-symmetric} if, in addition,
$k$ is odd and $d_{(k+1)/2}$ is odd.
Also, if $S=(S(1),\dots,S({k-1}))$ is a sequence satisfying \eqref{eq.Condition_S}, we say that  $S$ is
\emph{$\phi$-invariant} if
$E(S)$ is invariant by the automorphism $\phi$. This implies that  $\vec d$
is symmetric. Conversely, if  $\vec d$
is symmetric then the canonical sequence  (see Definition \ref{def.Normalized}) is  $\phi$-invariant.
\end{definition}

\begin{prop}\label{ida}
Let $\vec{d}=(d_1,\dots,d_{k})$ be odd-symmetric,
set $d=|\vec{d}|$,  and let
$S=(S(1),\dots,S({k-1}))$ be a $\phi$-invariant sequence satisfying \eqref{eq.Condition_S}.
Then
\[
A_{i,d+1-i}=0,\qquad i=1,\dots, \tfrac{d-1}2,
\]
for all $A\in\h(\alpha, \lambda,S)$.
In particular, if in addition $d_1=1$ then
$p_{1,k}(\h(\alpha, \lambda,S))=0$.
\end{prop}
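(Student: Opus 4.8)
The plan is to exploit the fact that $\phi$ is an automorphism of $\gl(d)$ that fixes $E(S)$ and to show it also fixes $D(\alpha,\lambda)$ up to the relevant structure, so that the whole generated algebra $\h(\alpha,\lambda,S)$ lies in the fixed-point set of $\phi$. First I would verify that $\phi(D(\alpha,\lambda))=D(\alpha,\lambda)$, or at least that $D(\alpha,\lambda)$ lands in the $+1$-eigenspace of $\phi$ modulo a scalar correction that does not affect the claim. Here the hypotheses on $\vec d$ enter decisively: since $\vec d$ is symmetric ($d_i=d_{k+1-i}$), the block $J^{d_i}(\al-(i-1)\la)$ sits in a position that $\phi$ maps to the block of the same size $d_{k+1-i}=d_i$, and the sign pattern $(-1)^{i-j+1}$ together with the minus-transpose recipe $\phi(A)=-KA^TK^{-1}$ should send an upper-triangular Jordan block to an upper-triangular Jordan block of the correct eigenvalue. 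I would compute this directly on a single Jordan block to confirm $\phi\bigl(J^{p}(\beta)\bigr)=J^{p}(\gamma)$ with the eigenvalues matching across the symmetric block arrangement (the eigenvalues $\al-(i-1)\la$ and $\al-(k-i)\la$ need to be reconciled, which may force a specific value of $\al$, or the diagonal part may simply be irrelevant to the off-diagonal conclusion we want).

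Once $\phi$ fixes both generators (equivalently, both lie in the fixed subalgebra $\g^\phi=\{A:\phi(A)=A\}$, which is a Lie subalgebra because $\phi$ is a Lie automorphism), it follows immediately that $\h(\alpha,\lambda,S)\subseteq\g^\phi$. Then for every $A\in\h(\alpha,\lambda,S)$ the defining relation $\phi(A)=A$ reads, entrywise, $A_{i,j}=(-1)^{i-j+1}A_{d+1-j,d+1-i}$. I would specialize this to the anti-diagonal entries by setting $j=d+1-i$, which gives
\[
A_{i,d+1-i}=(-1)^{i-(d+1-i)+1}A_{d+1-(d+1-i),\,d+1-i}=(-1)^{2i-d}A_{i,d+1-i}.
\]
The exponent $2i-d$ has the parity of $d$. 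The next step is therefore to pin down the parity of $d=|\vec d|$ from the odd-symmetry hypothesis: with $k$ odd, the symmetric pairing $d_i=d_{k+1-i}$ groups the blocks into $(k-1)/2$ pairs contributing an even total, plus the central block $d_{(k+1)/2}$, which is assumed odd. Hence $d$ is odd, so $2i-d$ is odd and $(-1)^{2i-d}=-1$, forcing $A_{i,d+1-i}=-A_{i,d+1-i}$ and thus $A_{i,d+1-i}=0$ for every $i$ (in particular for $1\le i\le\tfrac{d-1}2$, which is what is stated).

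For the final sentence, if additionally $d_1=1$ then the $(1,k)$-block is the single top-right entry $A_{1,d}$, which is exactly the anti-diagonal entry $A_{1,d+1-1}$ just shown to vanish; hence $p_{1,k}(A)=0$ for all $A\in\h(\alpha,\lambda,S)$, giving $p_{1,k}(\h(\alpha,\lambda,S))=0$. The main obstacle I anticipate is the verification that $D(\alpha,\lambda)$ is genuinely $\phi$-invariant: the eigenvalue bookkeeping under the symmetric block arrangement is delicate, and it is possible that $\phi$ only fixes $D(\alpha,\lambda)$ after a harmless adjustment (for instance, $\phi$ may fix the strictly-upper-triangular nilpotent part of each Jordan block while acting on the scalar diagonal part in a way that is absorbed by noting the diagonal contributes nothing to anti-diagonal entries once $d>1$). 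If full invariance of $D$ fails, the fallback is to argue only that the \emph{nilpotent} generators and their diagonal-shifts $E^{(l)}(S)$ are $\phi$-invariant and that $\ad D(\alpha,\lambda)$ commutes with $\phi$, so that $\n(S)$ is $\phi$-stable; combined with the observation that the diagonal matrix $D$ itself has no nonzero anti-diagonal entries when $d>1$, this still yields the vanishing of $A_{i,d+1-i}$ for all $A\in\h(\alpha,\lambda,S)$.
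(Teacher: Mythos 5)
Your first route fails, and you were right to hedge: a direct computation gives $\phi\big(D(\alpha,\lambda)\big)=D(\alpha,\lambda)+\big((k-1)\lambda-2\alpha\big)1_{\gl(d)}$ --- the symmetry of $\vec d$ makes the nilpotent superdiagonal part $\phi$-fixed, but the flipped-and-negated diagonal is only recovered up to a central shift --- so $D(\alpha,\lambda)$ lies in the fixed subalgebra of $\phi$ only for the special value $2\alpha=(k-1)\lambda$, and $\h(\alpha,\lambda,S)\not\subseteq\gl(d)^\phi$ in general. Your fallback is therefore the actual proof, and it coincides with the paper's: since the discrepancy $\phi(D)-D$ is central, $\ad D(\alpha,\lambda)$ commutes with $\phi$, so each generator $E^{(l)}(S)$ of $\n(S)$ is $\phi$-fixed (the paper phrases this as $\phi(D(0,0))=D(0,0)$, $\phi(E(S))=E(S)$ together with \eqref{eq.generators}), whence $\phi$ restricts to the identity on $\n(S)$; your parity argument ($k$ odd and $d_{(k+1)/2}$ odd force $d$ odd, so $(-1)^{2i-d}=-1$) then kills the entire antidiagonal of every element of $\n(S)$, exactly as in the paper.

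One slip in the fallback needs repair: the claim that ``the diagonal matrix $D$ itself has no nonzero anti-diagonal entries when $d>1$'' is false. $D(\alpha,\lambda)$ is block diagonal, so the antidiagonal meets it only inside the central block, and there the central diagonal entry $D_{(d+1)/2,\,(d+1)/2}=\alpha-\tfrac{k-1}{2}\lambda$ lies on the antidiagonal and is generally nonzero --- this is precisely why the proposition asserts vanishing only for $i=1,\dots,\tfrac{d-1}{2}$ rather than on the whole antidiagonal. For the entries in that range you need the oddness of $d_{(k+1)/2}$ a second time (not only for the parity of $d$): an antidiagonal position $(a,p+1-a)$ of the central Jordan block can hit the superdiagonal only if $p+1-a=a+1$, i.e.\ $p=2a$, impossible for odd $p=d_{(k+1)/2}$. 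The paper records exactly this point in its closing sentence (the antidiagonal entries of the central block of $D$ vanish ``with the possible exception of the diagonal entry''). With this correction, and using $\h(\alpha,\lambda,S)=\F D(\alpha,\lambda)\ltimes\n(S)$ from Proposition \ref{prop.struture} to combine the two pieces, your argument is complete; the $d_1=1$ consequence is fine as you state it.
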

\begin{proof}  Since
$\phi(D(0,0))=D(0,0)$ and $\phi(E(S))=E(S)$ it follows from (\ref{eq.generators}) that $\phi$ restricts to the identity
map on $\n(S)$.
Therefore, since
$\vec{d}$ is odd-symmetric (and hence $d$ is odd), the definition of $\phi$ implies that all the entries of $A$ in the antidiagonal
must be zero for all $A\in\n(S)$. Since $d_{(k+1)/2}$ is odd, the antidiagonal entries of
$p_{\tfrac{d+1}2,\tfrac{d+1}2}(D(\alpha,\lambda))$
are zero, with the possible exception of the diagonal entry.
\end{proof}

\subsection{The Lie algebra \texorpdfstring{$\n(S)$}{n(S)} associated} to the canonical sequence \texorpdfstring{$S=C$}{S=C}\label{subsec.canonical}
In this subsection we will consider the case $(\alpha,\lambda,S)=(0,0,C)$.
In order to simplify the notation, let
$\h=\h(0,0,C)$ and $E=E(C)$.

Associated to the Lie algebra $\h$ we define, for $1\le i<j\le k$, the numbers
$$
r_{i,j}= \begin{cases}
0, & \text{if $p_{i,j}(X)=0$ for all $X\in \h$;} \\
 {\min}\{\rk(p_{i,j}(X)):X\in \h, p_{i,j}(X)\neq 0\}, &\text{otherwise.}
 \end{cases}
 $$

\begin{prop}\label{prop.r012}
$r_{i,j}\in \{0,1,2\}$.
\end{prop}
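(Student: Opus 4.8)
The plan is to understand the structure of $\h = \h(0,0,C)$ very concretely and then argue that every block $p_{i,j}(X)$ of an element $X \in \h$ has rank at most $2$. First I would describe $\h$ explicitly. Since $(\alpha,\lambda) = (0,0)$, we have $D = D(0,0) = J^{d_1}(0) \oplus \cdots \oplus J^{d_k}(0)$, a nilpotent block-diagonal matrix, and $E = E(C)$ has the single canonical entry $1$ in the bottom-left corner of each superdiagonal block. By Proposition~\ref{prop.struture}, $\n(C)$ is spanned by iterated Lie brackets of the elements $E^{(l)}(C) = (\ad D)^l E$, and $\h = \F D \ltimes \n(C)$. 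Since $D$ adds nothing to the block-degree analysis beyond its diagonal blocks, the key is to control the blocks $p_{i,j}$ of arbitrary brackets of the $E^{(l)}(C)$.

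The central observation I would exploit is that each $E^{(l)}(C)$ lives in a single block-degree (namely block-degree $1$), while the operator $\ad D$ preserves block-degree and only shifts the diagonal-degree. Consequently any bracket of $q$ of the generators $E^{(l)}(C)$ lands in block-degree $q-1$ \emph{shifted}, so the block $p_{i,j}(X)$ with $j - i = t$ can only be reached by brackets of exactly $t$ generators following a chain $i \to i+1 \to \cdots \to j$. I would then track what such a product looks like inside the $(i,j)$-block. Because each generator contributes, within a single step $p_{m,m+1}$, a matrix that is a polynomial in $J^{d_m}(0)$ composed with the rank-one canonical block $C(m)$ conjugated by powers of Jordan blocks, the composite across the chain from column-block $i$ to column-block $j$ factors through the intermediate rank-one bottlenecks $C(i), \ldots, C(j-1)$. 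The rank of a product that is forced to pass through a rank-one map is at most $1$ — and the reason the answer is $2$ rather than $1$ is the antisymmetrization inherent in the Lie bracket, which can produce a difference of two such rank-one composites.

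More precisely, I expect the heart of the argument to be a Leibniz-type expansion: when one computes $p_{i,j}$ of a bracket $[\,[\cdots],E^{(l_q)}\,]$, the result is an alternating sum of ordered matrix products along the chain $i \to i+1 \to \cdots \to j$, and each such product is a composition of the rank-one blocks $C(m)$ (dressed by polynomials in the nilpotent Jordan blocks and by the $\ad D$-shifts encoded by $l_1,\dots,l_q$). A single ordered product has rank $\le 1$ because it factors through any one of the $C(m)$. The bracket expansion collapses this large alternating sum, and I would show that after cancellation only two genuinely distinct ordered chains survive with nonzero coefficient (intuitively, the two ``extreme'' orderings of how the $\ad D$-powers are distributed across the chain), so that $p_{i,j}(X)$ is a linear combination of at most two rank-one matrices. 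Hence $\rk\, p_{i,j}(X) \le 2$, giving $r_{i,j} \le 2$; and by definition $r_{i,j} \ge 0$, so $r_{i,j} \in \{0,1,2\}$.

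The main obstacle I anticipate is the bookkeeping in this cancellation step: one must parametrize all chains from block $i$ to block $j$ together with the distribution of the diagonal-degree shifts coming from the $l$'s, and show that after the alternating Lie-bracket signs are applied, the span of all resulting ordered products is at most two-dimensional in the relevant sense (i.e.\ generated by at most two rank-one matrices modulo scalars). I would likely set up a normal form for the ordered products $C(i)\,p\,C(i+1)\cdots C(j-1)$ with $p$ ranging over monomials in the $J^{d_m}(0)$, use Proposition~\ref{prop.degrees} to pin down exactly which entries a fixed diagonal-degree $t$ forces to be nonzero within each block, and then verify that the two-dimensionality survives taking the full subalgebra $\n(C)$ rather than just individual generators. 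Establishing that the bound is uniform over \emph{all} $X \in \h$ — not merely over single brackets — is where care is needed, but it follows once one knows $\n(C)$ is spanned by the brackets and that the bound $\le 2$ is preserved under the $\F$-linear combinations defining $p_{i,j}(\h)$.
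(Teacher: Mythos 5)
There is a genuine gap, and it starts with a misreading of the statement. The quantity $r_{i,j}$ is the \emph{minimum} rank over the nonzero blocks $p_{i,j}(X)$, $X\in\h$, not a uniform bound, so your plan to show $\rk(p_{i,j}(X))\le 2$ for \emph{all} $X\in\h$ is aimed at a stronger statement that is in fact false. Already the single generators violate it: $p_{i,i+1}(E^{(l)})=(\ad\,J^{d_i}(0))^{l}$ applied to $C(i)$ is supported on an entire diagonal of the block, with nonvanishing binomial coefficients (Proposition \ref{prop.F0a}), so its rank can be as large as $\min(d_i,d_{i+1})$ --- e.g.\ for $d_i=d_{i+1}=3$ and $l=2$ one gets a rank-$3$ block. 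This same computation defeats your key mechanism: the one-step blocks $p_{m,m+1}(E^{(l)})$ are \emph{not} rank one, so the ordered products along a chain $i\to i+1\to\cdots\to j$ do not factor through rank-one bottlenecks, and the claimed collapse to ``two surviving orderings'' has no basis. Finally, even if each individual bracket had a rank-$\le 2$ block, your closing step --- that the bound ``is preserved under the $\F$-linear combinations'' spanning $\n(C)$ --- is false, since rank is not controlled under addition; this is exactly why a uniform bound cannot be the right strategy.

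The paper's proof needs much less and uses only the one generator whose blocks genuinely are rank one, namely $E=E(C)$ itself. For any $X$ of block-degree $j-i-1$, block-superdiagonality of $E$ gives $p_{i,j}([E,X])=C(i)\,p_{i+1,j}(X)-p_{i,j-1}(X)\,C(j-1)$, whose first term is supported on the last row of the block and whose second term is supported on the first column; a matrix supported on one row and one column has rank at most $2$. This yields $r_{i,j}\le 2$ provided \emph{some} bracket $[E,X]$ has a nonzero $(i,j)$-block. The complementary case is handled by the induction in the paper: if $p_{i,j}([E,X])=0$ for all $X\in\h$, then bracketing with $D$ (which is block-diagonal, so it only multiplies the $(i,j)$-block by $p_{i,i}(D)$ and $p_{j,j}(D)$) shows $p_{i,j}([\ad(D)^{r}E,X])=0$ for all $r\ge 0$ and all $X\in\h$; since the elements $\ad(D)^{r}E$ generate $\n$ by Proposition \ref{prop.struture}, every element of $\n$ has vanishing $(i,j)$-block and $r_{i,j}=0$. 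So the dichotomy ``either $r_{i,j}=0$ or the minimum is realized by some $[E,X]$ of rank $\le 2$'' replaces all of your cancellation bookkeeping; if you want to salvage your write-up, the repair is to drop the uniform bound and the chain-product analysis and argue only about brackets of the specific form $[E,X]$, together with the vanishing induction above.
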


\begin{proof}
It follows from the definition of $E$ that $r_{i,i+1}=1$,
for $1\le i\le k-1$.
For $l\ge 1$, $r_{i,i+l+1}\le 2$ is a consequence of the following two facts.
First, if $X$ is any element of block-degree $l$, then $\rk(p_{i,i+l+1}([E,X]))\le 2$, since all the elements of $p_{i,i+l+1}([E,X])$ are zero, with the possible exception of those in the first column and the last row.

On the other hand, set $j=i+l+1$, we will prove that
if $p_{i,j}([E,X])=0$ for all $X\in\h$,
then $r_{i,j}=0$.
By induction we will show that
\[
p_{i,j}([\ad(D)^{r}E,X])=0,\qquad r\ge0;\; X\in\h.
\]
The case $r=0$ is given. Moreover, given the case $r$,
\begin{align*}
p_{i,j}([\ad(D)^{r+1}E,X])
& =p_{i,j}([D,\ad(D)^{r}E],X]) \\
& =-p_{i,j}([\ad(D)^{r}E,[D,X]]) + p_{i,j}([D,[\ad(D)^{r}E,X]]) \\
& =p_{i,i}(D) p_{i,j}([\ad(D)^{r}E,X])-
p_{i,j}([\ad(D)^{r}E,X])p_{j,j}(D) \\
&=0.
\end{align*}
Since we know from Proposition \ref{prop.struture} that
the elements $\ad(D)^{r}E$, $r\ge0$, generate $\n$,
it follows that $r_{i,j}=0$.
\end{proof}

\begin{prop}\label{prop.F0a}
If $A\in\gl(d)$ has the property
\[
\big(p_{i,j}(A)\big)_{a,b}=
\begin{cases}
1, & \text{if $a,b=a_0,b_0$;}\\
0, & \text{otherwise;}
\end{cases}
\]
then the entries of $p_{i,j}(\ad(D)^k(A))$ are zero
except those contained in the diagonal $b-a=b_0-a_0+k$, in which case:
\[
\big(p_{i,j}(\ad(D)^k(A))\big)_{a_0-i,b_0+k-i}=(-1)^{k-i}\binom{k}{i}.
\]
In particular, $\big(p_{i,j}(A)\big)_{d_i,1}=1$ then
all the entries of $p_{i,j}(\ad(D)^{d_i+d_j-1}(A))$
are zero except
\[
\big(p_{i,j}(\ad(D)^{d_i+d_j-2}(A))\big)_{1,d_j}=(-1)^{d_j-1}\binom{d_i+d_j-2}{d_i-1}.
\]
\end{prop}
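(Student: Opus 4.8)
The plan is to reduce everything to a single-entry computation inside one block and to understand how $\ad(D)$ acts on matrix units. Since the hypothesis concerns only the $(i,j)$-block of $A$, and since $D=J^{d_1}(\al)\oplus\cdots\oplus J^{d_k}(\al-(k-1)\la)$ is block diagonal, the action of $\ad(D)$ on $p_{i,j}(A)$ is governed entirely by $p_{i,i}(D)$ and $p_{j,j}(D)$. Writing $N=J^{p}(0)$ for the relevant nilpotent upper Jordan block, we have $p_{i,i}(D)=\al 1 + N_i$ and $p_{j,j}(D)=(\al - (j-1)\la)1 + N_j$ where $N_i,N_j$ are the corresponding upper shift matrices, and the scalar parts contribute nothing to the bracket. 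Hence, restricting to the block, $\ad(D)$ acts as $\ad(N_i) - (\text{right multiplication shift by } N_j)$; more precisely, for a matrix unit $E_{a,b}$ supported in the block, $[D, E_{a,b}]$ differs from $N_i E_{a,b} - E_{a,b} N_j$ only by a harmless scalar multiple of $E_{a,b}$ itself (which lies on the same diagonal and can be absorbed). The essential point is that the shift operators move the nonzero entry to the diagonal $b-a+1$, confirming the claim that after $k$ applications the support lies on $b-a = b_0-a_0+k$.

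First I would set up the single-entry case: start with $E_{a_0,b_0}$ (the matrix unit with the $1$ in position $(a_0,b_0)$ inside the block) and prove by induction on $k$ that
\[
\big(\ad(D)^k(E_{a_0,b_0})\big)_{a,b}
\]
is supported on $b-a=b_0-a_0+k$ with the stated binomial coefficient $(-1)^{k-i}\binom{k}{i}$ in position $(a_0-i, b_0+k-i)$. The induction step is a Pascal-type identity: applying $\ad(D)$ once more shifts the row index down by one (from the left action of $N_i$, which sends $E_{a,b}\mapsto E_{a-1,b}$) and the column index up by one (from the right action of $-N_j$, which sends $E_{a,b}\mapsto -E_{a,b+1}$), and summing the two contributions to a fixed target position reproduces $\binom{k}{i} = \binom{k-1}{i-1}+\binom{k-1}{i}$ together with the correct sign. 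This is exactly the standard expansion of $(\ad N_i - \ad N_j)^k$ acting on a single unit, so the coefficients are binomial by the binomial theorem applied to two commuting shift operators.

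For the "in particular" clause I would specialize to $a_0=d_i$, $b_0=1$ (the bottom-left corner of the block), which is where the canonical $E$ has its nonzero entries, and take $k=d_i+d_j-2$. Plugging $a_0=d_i$, $b_0=1$ into the general formula and choosing the index $i$ in the binomial to be $d_i-1$ (so that the surviving position is $(a_0-i,\,b_0+k-i)=(1,\,d_j)$, the top-right corner), gives the coefficient $(-1)^{d_j-1}\binom{d_i+d_j-2}{d_i-1}$. All other entries vanish because the support diagonal $b-a=d_j-1$ meets the block only in the single corner position $(1,d_j)$, as the row index cannot drop below $1$ nor the column index exceed $d_j$. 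The main obstacle here is bookkeeping rather than conceptual: one must track the sign and the re-indexing carefully (the formula uses $i$ both as a block label and as the binomial summation index, so I would keep these notationally distinct in the proof), and one must verify that the scalar-part and the off-block contributions genuinely drop out, but once the reduction to two commuting shift operators is made the binomial structure is forced.
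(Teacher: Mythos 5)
Your proposal is correct and is essentially the paper's own argument: the paper disposes of this proposition with ``this is a straightforward computation,'' and your reduction to the two commuting shift operators (left multiplication by $N_i$, right multiplication by $N_j$) with the binomial expansion $(\ad\, D)^k E_{a_0,b_0}=\sum_{m=0}^{k}(-1)^m\binom{k}{m}E_{a_0-(k-m),\,b_0+m}$ is exactly that computation, including the correct specialization $(a_0,b_0)=(d_i,1)$, $k=d_i+d_j-2$ for the ``in particular'' clause (and you rightly read the statement's exponent $d_i+d_j-1$ as a typo for $d_i+d_j-2$). One imprecision worth fixing: your parenthetical that the bracket ``differs \dots only by a harmless scalar multiple of $E_{a,b}$ itself, which lies on the same diagonal and can be absorbed'' is not a valid justification. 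The scalar discrepancy between the two diagonal blocks is $(j-i)\la\, p_{i,j}(A)$, which sits on the \emph{original} diagonal $b-a$; if it were nonzero, iterating $\ad(D)$ would spread support over all diagonals $b_0-a_0+m$ with $0\le m\le k$, falsifying the single-diagonal support claim rather than being absorbable. The step is correct here only because the proposition lives in \S\ref{subsec.canonical}, where $D=D(0,0)$, so $\la=0$ and the scalar term vanishes identically --- your earlier sentence ``the scalar parts contribute nothing to the bracket'' is the accurate statement, and the proof should cite that cancellation explicitly instead of the absorption remark.
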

\begin{proof}
This is a straightforward computation.
\end{proof}

\begin{prop}\label{prop.F0b}
If there is
$X\in \h$ such that $\big(p_{i,j}(X)\big)_{d_i,1}\ne 0$, then $r_{i,j}=1$.
\end{prop}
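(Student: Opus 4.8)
The goal is to produce a single element of $\h$ whose $(i,j)$-block is nonzero and has rank exactly $1$. Since the rank of any nonzero matrix is at least $1$, and the hypothesis guarantees that $p_{i,j}(X)\neq 0$ for some $X\in\h$ (forcing $r_{i,j}\geq 1$), it suffices to exhibit one element $Y\in\h$ with $p_{i,j}(Y)$ a nonzero rank-one matrix. The plan is to start from the given $X$ with nonzero bottom-left corner entry $c:=\big(p_{i,j}(X)\big)_{d_i,1}\neq 0$ and hit it with a suitable power of $\ad(D)$, using the fact that repeated application of $\ad(D)$ collapses the entire $(i,j)$-block onto its top-right corner.

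The mechanism is as follows. Because $D=D(0,0)=J^{d_1}(0)\oplus\cdots\oplus J^{d_k}(0)$ is block diagonal, $\ad(D)$ preserves each block and $p_{i,j}(\ad(D)^m X)$ depends only on $p_{i,j}(X)$; concretely it acts on the block as $Z\mapsto J^{d_i}(0)\,Z-Z\,J^{d_j}(0)$. Writing $p_{i,j}(X)$ as a linear combination of elementary matrices and applying Proposition \ref{prop.F0a} to each summand, an entry originally at position $(a,b)$ is sent, after applying $\ad(D)^m$, onto the single diagonal $b-a+m$ of the block (with binomial coefficients). I would choose the specific exponent $m=d_i+d_j-2$ and set $Y=\ad(D)^{d_i+d_j-2}X\in\h$.

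The heart of the argument, and the step requiring the most care, is the verification that with this choice of $m$ \emph{only} the bottom-left corner entry survives inside the block. A direct index count shows that the shifted diagonal $b-a+(d_i+d_j-2)$ of an entry at $(a,b)$ lies within the block (i.e.\ is at most $d_j-1$) precisely when $a-b\geq d_i-1$, which among $1\le a\le d_i$, $1\le b\le d_j$ happens only for $(a,b)=(d_i,1)$; every other entry is pushed off the edge of the block and is annihilated. Thus the last part of Proposition \ref{prop.F0a} applies to $c$ times the corner elementary matrix, giving
\[
p_{i,j}(Y)=c\,(-1)^{d_j-1}\binom{d_i+d_j-2}{d_i-1}\,E,
\]
where $E$ denotes the elementary matrix with a single $1$ in the top-right corner $(1,d_j)$ of the block. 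Since $\chr\F=0$, the binomial coefficient is nonzero, so $p_{i,j}(Y)$ is a nonzero matrix of rank $1$. Hence $r_{i,j}\le 1$, and combined with $r_{i,j}\ge 1$ this yields $r_{i,j}=1$. The only genuine obstacle is the edge-collapse bookkeeping of the third step; once Proposition \ref{prop.F0a} is invoked, the remainder is a short counting argument.
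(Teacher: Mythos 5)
Your proof is correct and is exactly the paper's argument: the paper proves Proposition \ref{prop.F0b} as an immediate consequence of Proposition \ref{prop.F0a}, namely by applying $\ad(D)^{d_i+d_j-2}$ to collapse the $(i,j)$-block onto its top-right corner entry, which is a nonzero multiple (by $\chr\F=0$) of $\binom{d_i+d_j-2}{d_i-1}$ times the corner coefficient. Your edge-collapse bookkeeping (only $(a,b)=(d_i,1)$ satisfies $a-b\ge d_i-1$) and the blockwise action of $\ad(D)$ are precisely the details the paper leaves implicit, so nothing is missing.
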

\begin{proof}
This is consequence of Proposition \ref{prop.F0a}.
\end{proof}

\begin{prop}\label{prop.dominante}
If $r_{i,j}=1$ then there exists $X\in \h$ such that
\begin{equation}\label{eq.dominante1}
p_{i,j}(X)=
\begin{pmatrix}
0 & \dots & 0 & 1 \\
0 & \dots & 0 & 0 \\
 \vdots & \vdots & \vdots & \vdots \\
 0 & \dots & 0 & 0
\end{pmatrix}
\end{equation}
and if $r_{i,j}=2$ then there exists $X\in \h$ such that
\begin{equation}\label{eq.dominante2}
p_{i,j}(X)=
\begin{pmatrix}
0 & \dots & 1 & * \\
0 & \dots & 0 & 1 \\
 \vdots & \vdots & \vdots & \vdots \\
 0 & \dots & 0 & 0
\end{pmatrix}.
\end{equation}
\end{prop}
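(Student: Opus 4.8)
The plan is to study the block $\mathcal M=p_{i,j}(\h)$. Since $D=D(0,0)$ is block diagonal we have $p_{i,j}(\h)=p_{i,j}(\n)$, and by Proposition \ref{prop.struture}(3) the algebra $\n=\n(C)$ is graded by diagonal-degree; as $p_{i,j}$ carries the diagonal-degree grading into the grading of $M_{d_i\times d_j}$ by diagonals $s-r=\text{const}$ (Proposition \ref{prop.degrees}), the subspace $\mathcal M$ is \emph{graded}. Hence every diagonal-slice of a matrix in $\mathcal M$ again lies in $\mathcal M$, and $r_{i,j}=\min\{\rk N:0\ne N\in\mathcal M\}$. A homogeneous element (supported on a single diagonal) has rank equal to the number of its nonzero entries, since these occupy distinct rows and distinct columns.

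First I would establish a \emph{top-slice reduction}. If $N\in\mathcal M$ is nonzero of minimal rank $r=r_{i,j}$ and $c^{*}$ is its largest diagonal, then the slice $Y$ of $N$ on the diagonal $c^{*}$ is homogeneous of rank exactly $r$. Indeed, writing the nonzero entries of $Y$ in rows $r_1<\dots<r_s$, the submatrix of $N$ on rows $\{r_a\}$ and columns $\{r_a+c^{*}\}$ is lower triangular with nonzero diagonal, since any entry strictly above its diagonal would lie on a diagonal exceeding $c^{*}$ and hence vanish; this submatrix is invertible, so $r=\rk N\ge s$, while $Y\in\mathcal M$ is homogeneous of rank $s$, forcing $s=r$ by minimality. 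Thus the minimal rank is realized by a homogeneous $Y\in\mathcal M$ of support exactly $r_{i,j}$.

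Next I would push $Y$ toward the corner with $\ad(D)$, using the explicit action of Proposition \ref{prop.F0a}. If $r_{i,j}=1$, then $Y$ has a single entry in some position $(r_0,s_0)$; applying $\ad(D)^{(d_j-1)-(s_0-r_0)}$ moves it onto the diagonal $s-r=d_j-1$, whose only slot is $(1,d_j)$, and Proposition \ref{prop.F0a} guarantees the resulting coefficient is a nonzero binomial, so after rescaling we obtain \eqref{eq.dominante1}. If $r_{i,j}=2$, then $Y$ has support $2$ on a diagonal $c^{*}\le d_j-2$ (the diagonal $d_j-1$ is excluded, since a single entry there would be an element of rank $1$, contradicting the definition of $r_{i,j}$). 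Pushing $Y$ onto the diagonal $d_j-2$, whose only slots are $(1,d_j-1)$ and $(2,d_j)$, the image has support at most $2$ and cannot have support $1$ (no rank-$1$ element exists), hence has support exactly $2$, with values $a$ at $(1,d_j-1)$ and $b$ at $(2,d_j)$. One further application of $\ad(D)$ sends this into the slot $(1,d_j)$ with value $b-a$; were $b\ne a$ this would produce a rank-$1$ element of $\mathcal M$, which is impossible, so $a=b$ and, after scaling, the block equals the matrix $T_{d_j-2}$ of \eqref{eq.dominante2} with $*=0$.

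The main obstacle is the transport step in the case $r_{i,j}=2$: I must rule out that $Y$ \emph{dies} under $\ad(D)$ before reaching the diagonal $d_j-2$, that is, that $\ad(D)^k Y$ becomes zero in the block while still on a diagonal $c''\le d_j-3$ (such a terminal slice would be a full constant diagonal of rank $d_j-c''\ge 3$, a priori compatible with $r_{i,j}=2$). Proving that the absence of rank-$1$ elements, together with the precise binomial weights of Proposition \ref{prop.F0a}, forces the two-entry slice to survive up to the diagonal $d_j-2$ with equal leading coefficients is exactly the delicate tracking of minimal nonzero ranks that the induction of this subsection is designed to carry out; I expect this bookkeeping, following diagonal by diagonal how the two-entry configuration propagates, to be the crux of the argument.
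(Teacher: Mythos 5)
Your framework --- transporting a minimal-rank slice toward the upper-right corner with $\ad(D)$ and reading off binomial coefficients via Proposition \ref{prop.F0a} --- is essentially the paper's argument; your graded top-slice reduction is a clean (and correct) variant of the paper's choice of the \emph{lowest} nonzero diagonal $t_0$ of a minimal-rank block, and your $r_{i,j}=1$ case is complete, since a single entry at $(a_0,b_0)$ reaches the corner slot $(1,d_j)$ with coefficient $\pm\binom{d_j-1-c}{a_0-1}\neq 0$ in characteristic $0$. The genuine gap is exactly the one you flag and leave open in the case $r_{i,j}=2$: you never rule out that the two-entry slice $Y$ dies under $\ad(D)$ before reaching the diagonal $d_j-2$, and without that the step ``the image has support at most $2$ and cannot have support $1$, hence has support exactly $2$'' is unjustified, because support $0$ remains possible a priori.

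The missing step does not require the rank-tracking induction of the subsection; it is closed by a two-line linear-independence argument, which is precisely how the paper finishes. Write the two entries of $Y$ as $x_1,x_2$ in rows $a_1<a_2$ on the diagonal $c^{*}$, and set $k_0=d_j-2-c^{*}$. By Proposition \ref{prop.F0a}, the values of $(\ad D)^{k_0}Y$ at the two slots $(1,d_j-1)$ and $(2,d_j)$ are, up to uniform signs, $x_1\binom{k_0}{a_1-1}+x_2\binom{k_0}{a_2-1}$ and $x_1\binom{k_0}{a_1-2}+x_2\binom{k_0}{a_2-2}$. The constraints $1\le a_1<a_2\le d_i$ and $a_2+c^{*}\le d_j$ force $0\le a_1-1\le k_0$ and $0\le a_2-2\le k_0$, so the pairs $\bigl(\binom{k_0}{a_m-1},\binom{k_0}{a_m-2}\bigr)$, $m=1,2$, are nonzero and linearly independent (the ratio $\binom{k_0}{l+1}/\binom{k_0}{l}=(k_0-l)/(l+1)$ is strictly decreasing in $l$, char $0$); this is the paper's ``two pairs of consecutive binomial numbers are linearly independent.'' Hence $(x_1,x_2)\neq(0,0)$ already gives a nonzero image on the diagonal $d_j-2$: there is no early death, and your feared terminal alternating-constant diagonal of rank $\ge 3$ can never arise from a two-entry slice. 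With this lemma inserted, the rest of your argument goes through as written --- support exactly $2$ because no rank-one element exists, equal corner entries because one more application of $\ad(D)$ would otherwise produce a rank-one element, and $*=0$ by homogeneity of the top slice, which in fact yields a marginally sharper normal form than \eqref{eq.dominante2}, where $*$ is an arbitrary scalar because the paper transports the lowest diagonal of the full block rather than a single slice.
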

\begin{proof}
Let $X\in\h$ be such that $\rk(p_{i,j}(X))=r_{i,j}$,
and let
\begin{align*}
t_0&=\min\{t=b-a:\big(p_{i,j}(X)\big)_{a,b}\ne 0\},\\
T_0& =\{(a,b):b-a=t_0\text{ and } \big(p_{i,j}(X)\big)_{a,b}\ne 0\}.
\end{align*}

If $r_{i,j}=1$ then there is only one pair $(a_0,b_0)\in T_0$.
If $k_0=d_j-1-t_0$ then it follows from
Proposition \ref{prop.F0a} that
$\ad(D)^{k_0}(X)$ is, up to a non-zero scalar, as stated.

If $r_{i,j}=2$ then there are at most two possible pairs $(a,b)\in T_0$.
It follows from Proposition \ref{prop.F0a} that,
if $k_0=d_j-2-t_0$, then
the only possible non-zero entries of
$p_{i,j}(\ad(D)^{k_0}(X))$  are
\begin{align*}
\big(p_{i,j}(X)\big)_{1,d_j-1}\quad  & \big(p_{i,j}(X)\big)_{1,d_j} \\
& \big(p_{i,j}(X)\big)_{2,d_j}.
\end{align*}
Moreover, the pair
\begin{equation}\label{eq.pair}
\Big(\big(p_{i,j}(X)\big)_{1,d_j-1},\; \big(p_{i,j}(X)\big)_{2,d_j}\Big)
\end{equation}
is a linear combination of two pairs of two consecutive
binomial numbers $\binom{k_0}{l}$, $0\le l\le k$, that is
\[
\Big(
 \big(p_{i,j}(X)\big)_{1,d_j-1},\;
\big(p_{i,j}(X)\big)_{2,d_j}
\Big)
=
x_1
\Big(
\binom{k_0}{l_1} ,\;
\binom{k_0}{l_1+1}
\Big)
+
x_2
\Big(
\binom{k_0}{l_2} ,\;
\binom{k_0}{l_2+1}
\Big)
\]
with $0\le l_1\ne l_2\le k_0$ for some $(x_1,x_2)\ne(0,0)$.
Since
$
\Big(
\binom{k_0}{l_1} ,\;
\binom{k_0}{l_1+1}
\Big)
$ and $
\Big(
\binom{k_0}{l_2} ,\;
\binom{k_0}{l_2+1}
\Big)
$  are linearly independent,
it follows that the pair \eqref{eq.pair} is non-zero.
Finally, we conclude that
$\ad(D)^{k_0}(X)$ is, up to a non-zero scalar, as stated because otherwise
we would have $r_{i,j}=1$.
\end{proof}

\begin{prop}\label{prop.F1} If there exists $X\in\h$ such that either
\[
p_{i,j}(X)=0\quad\text{ and }\quad p_{i+1,j+1}(X)\neq0
\]
or
\[
p_{i,j}(X)\ne0\quad\text{ and }\quad p_{i+1,j+1}(X)=0
\]
then $r_{i,j+1}=1$. Moreover, any of the following:
\begin{enumerate}
    \item[(a1)]  $r_{i,j}=0$ and $r_{i+1,j+1}\neq0$,
    \item[(a2)]  $r_{i,j}\ne 0$ and $r_{i+1,j+1}=0$,
    \item[(b)]  $r_{i,j}=r_{i+1,j+1}=1$ and $d_{i}\ne d_{j+1}$,
    \item[(c1)]  $r_{i,j}=1$, $r_{i+1,j+1}=2$ and $d_{i}+1\ne d_{j+1}$,
    \item[(c2)]  $r_{i,j}=2$, $r_{i+1,j+1}=1$ and $d_{i}\ne d_{j+1}+1$.
\end{enumerate}
implies the existence of such an $X$ and thus $r_{i,j+1}=1$.
\end{prop}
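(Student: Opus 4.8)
The plan is to prove the two assertions separately: first that the block asymmetry in the hypothesis already forces $r_{i,j+1}=1$, and then that each of (a1)--(c2) manufactures an element exhibiting such an asymmetry.

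For the first assertion I would exploit a single commutator with $E=E(C)$. The only nonzero block of $E$ on the block-superdiagonal is $C(i)$, a lone $1$ in position $(d_i,1)$, so a direct block computation gives, for every $Y\in\h$,
\[
p_{i,j+1}([E,Y])=C(i)\,p_{i+1,j+1}(Y)-p_{i,j}(Y)\,C(j).
\]
Here $C(i)\,p_{i+1,j+1}(Y)$ places the first row of $p_{i+1,j+1}(Y)$ into the last row $d_i$, while $p_{i,j}(Y)\,C(j)$ places the last column of $p_{i,j}(Y)$ into the first column; thus $p_{i,j+1}([E,Y])$ has an ``$L$-shape''. In the first case of the hypothesis ($p_{i,j}(X)=0$, $p_{i+1,j+1}(X)\ne0$) only the row part survives, so $p_{i,j+1}([E,Y])$ has all rows zero except row $d_i$, which equals the first row $v$ of $p_{i+1,j+1}(Y)$; this has rank $\le 1$, and rank exactly $1$ as soon as $v\ne 0$. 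Since a single element of $\h$ with a rank-$1$ block $(i,j+1)$ already forces $r_{i,j+1}=1$ (the block is nonzero, and then $r_{i,j+1}\le 1$), the first assertion reduces to making that first row nonzero while keeping the $(i,j)$ block zero.

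Since $\ad(D)$ acts on each block $(a,b)$ by $M\mapsto J^{d_a}(0)M-MJ^{d_b}(0)$, the enabling lemma is: any nonzero $M$ has some iterate under this operator with nonzero first row (and, symmetrically, with nonzero last column). I would prove this straight from Proposition \ref{prop.F0a}: let $t_0=\min\{b-a:M_{a,b}\ne0\}$ and let $a_*$ be the least row index occurring on that diagonal; then with exponent $k=a_*-1$ the entry $(a_*,a_*+t_0)$ lands in position $(1,a_*+t_0)$, and the position and validity constraints of Proposition \ref{prop.F0a} show that no other entry of $M$ can reach it, so \emph{no cancellation} occurs and the first row is nonzero. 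Applying this to $M=p_{i+1,j+1}(X)$ and setting $Y=\ad(D)^{k}X$ --- which preserves $p_{i,j}(Y)=0$, as $\ad(D)$ acts on the $(i,j)$ block through that block alone --- completes the first case; the second case is identical using the last-column version and the column part of the $L$-shape.

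For the second assertion the strategy is to manufacture the asymmetry using the diagonal-degree grading of $\n(C)$ (Proposition \ref{prop.struture}(3)) together with Proposition \ref{prop.dominante} and the positional bookkeeping of Proposition \ref{prop.degrees}. Cases (a1) and (a2) are immediate: when $r_{i,j}=0$ an element with nonzero $(i+1,j+1)$ block automatically has zero $(i,j)$ block, and symmetrically. For (b), (c1), (c2) I would take the \emph{homogeneous} diagonal-degree component of a dominant element furnished by Proposition \ref{prop.dominante}. A dominant rank-$1$ block $(i,j)$ is the single corner entry $(1,d_j)$, of diagonal degree $t_1=(d_i+\cdots+d_j)-1$; by Proposition \ref{prop.degrees} its degree-$t_1$ component confines all $(i+1,j+1)$ entries to the diagonal $s-r=d_i-1$. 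Likewise a dominant rank-$1$ (resp. rank-$2$) block $(i+1,j+1)$ has degree $t'=(d_{i+1}+\cdots+d_{j+1})-1$ (resp. $t_2=t'-1$), forcing the conjugate $(i,j)$ entries onto the diagonal $s-r=d_j+d_{j+1}-d_i-1$ (resp. $-2$). The asymmetry is then extracted by two complementary mechanisms: either the conjugate diagonal lies off the block, so that block vanishes outright; or it carries a single position, whence rank $\le 1$, and minimality of the conjugate value ($=2$) forces the block to vanish. For (b) exactly one of $d_i>d_{j+1}$ or $d_i<d_{j+1}$ holds, and the matching dominant element has empty conjugate diagonal; for (c1), $d_i+1\ne d_{j+1}$ splits as $d_{j+1}>d_i+1$ (the rank-$2$ element's conjugate diagonal is empty) or $d_{j+1}\le d_i$ (the rank-$1$ element's conjugate diagonal is empty, or a single position killed by minimality of $r_{i+1,j+1}=2$); (c2) is the mirror image under interchanging the two blocks. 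Each resulting element satisfies the hypothesis of the first assertion. I expect the main obstacle to be the cancellation issue in the enabling lemma --- naively pushing a block to an edge can annihilate it, and the argument hinges on the extremal choice (least row on the least diagonal) so that Proposition \ref{prop.F0a} guarantees it survives; the secondary delicate point is checking that the excluded equalities $d_i=d_{j+1}$, $d_i+1=d_{j+1}$, $d_i=d_{j+1}+1$ are precisely the ``two-position'' diagonals at which minimality fails to force vanishing.
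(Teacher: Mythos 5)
Your proposal is correct and follows essentially the same route as the paper: for the first assertion, a single bracket with $E$ after an $\ad(D)$-normalization that makes the relevant edge (first row of $p_{i+1,j+1}$, resp.\ last column of $p_{i,j}$) nonzero while the other block stays zero, and for (b), (c1), (c2), diagonal-degree homogeneous dominant elements from Proposition \ref{prop.dominante} whose conjugate block is forced to vanish by the degree/position bookkeeping of Proposition \ref{prop.degrees}, with minimality of the rank $2$ killing the single-position diagonals. The only differences are harmless elaborations: you prove the edge-normalization lemma directly from Proposition \ref{prop.F0a} via the extremal no-cancellation choice (the paper instead cites \cite[Lemma 6.1]{CGS1}), you make explicit the minimality step in (c1) that the paper hides behind ``analogous to (b)'', and you handle (a2), (c2) by mirrored computations where the paper invokes the $\phi$-symmetry.
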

\begin{proof}
Suppose first there exists $X\in\h$ such that $p_{i,j}(X)\neq 0$ and $p_{i+1,j+1}(X)=0$. Repeatedly bracketing $X$ with $D$
and applying \cite[Lemma 6.1]{CGS1}, we may assume without loss of generality that the first row of $p_{i,j}(X)$ is not zero. It is then clear that
${\rm rk}(p_{i,j+1}([E,X]))=1$. The case  $p_{i,j}(X)=0$ and $p_{i+1,j+1}(X)\neq 0$ is analogous.

Now we prove the particular statements.
By symmetry, it is enough to prove (a1), (b) and (c1).

Proof of (a1): it is immediate that (a1) implies the existence of
$X\in\h$ such that $p_{i,j}(X)=0$  and $p_{i+1,j+1}(X)\neq0$.

Proof of (b): let $X\in \h\cap\mathcal{D}_{t_X}$ be homogeneous
such that all the entries of $p_{i,j}(X)$ are zero except that
$\big(p_{i,j}(X)\big)_{1,d_j}=1$, as granted by  Proposition \ref{prop.dominante}.
This implies that $t_X=d_{j-1}+\dots+d_i+(d_j-1)$.

Similarly, let $Y\in \h\cap\mathcal{D}_{t_Y}$ be homogeneous
such that all the entries of  $p_{i+1,j+1}(Y)$ are zero except that
$\big(p_{i+1,j+1}(Y)\big)_{1,d_{j+1}}=1$.
Now $t_Y=d_{j}+\dots+d_{i+1}+(d_{j+1}-1)$.

It follows from the hypothesis that
\[
t_Y-t_X=d_{j+1}-d_i\ne0.
\]
Therefore, either $t_Y>t_X$,
in which case $p_{i,j}(Y)=0$
or
$t_X>t_Y$,
in which case $p_{i+1,j+1}(X)=0$, and we are done.

Proof of (c1): This is analogous to the proof of (b).
\end{proof}

\begin{prop}\label{useful}
 If $r_{i,j}=1$ and one of the following hold:
\begin{enumerate}
    \item[(a)]  $d_i,d_j>1$,
    \item[(b1)]  $d_j>1$ and $r_{i+1,j+1}=0$,
    \item[(b2)]  $d_i>1$ and $r_{i-1,j-1}=0$,
    \item[(c)]  $r_{i+1,j+1}=r_{i-1,j-1}=0$.
\end{enumerate}
then $r_{i-1,j+1}=1$.
\end{prop}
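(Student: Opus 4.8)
The plan is to prove $r_{i-1,j+1}=1$ by exhibiting a single element of $\h$ whose $(i-1,j+1)$-block has rank exactly $1$; since that block is then nonzero, the minimality in the definition of $r_{i-1,j+1}$ forces $r_{i-1,j+1}=1$. Because $r_{i,j}=1$, Proposition \ref{prop.dominante} supplies $X_0\in\h$ with $p_{i,j}(X_0)$ equal to the dominant matrix of \eqref{eq.dominante1}, that is, with a single $1$ in position $(1,d_j)$ and zeros elsewhere. Replacing $X_0$ by its block-degree $j-i$ homogeneous component (legitimate by Proposition \ref{prop.struture}(3), and harmless since it does not change $p_{i,j}$), I may assume $X_0$ is homogeneous of block-degree $j-i$. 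The candidate element will be $(\ad E)^2X_0=[E,[E,X_0]]\in\h$, which has block-degree $j-i+2$ and hence is allowed a nonzero $(i-1,j+1)$-block.

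First I would record the block formula $[E,Y]_{(a,c)}=C(a)\,Y_{(a+1,c)}-Y_{(a,c-1)}\,C(c-1)$, valid for every $Y$ and coming directly from the shape of $E=E(C)$. Applying it twice and collecting terms, the $(i-1,j+1)$-block of $(\ad E)^2X_0$ expands as
\[
C(i-1)C(i)\,(X_0)_{(i+1,j+1)}\;-\;2\,C(i-1)\,(X_0)_{(i,j)}\,C(j)\;+\;(X_0)_{(i-1,j-1)}\,C(j-1)C(j).
\]
A direct computation with the dominant form of $(X_0)_{(i,j)}$ shows the middle term is the matrix with the single entry $-2$ in position $(d_{i-1},1)$ and zeros elsewhere (left multiplication by $C(i-1)$ selects the first row, right multiplication by $C(j)$ selects the last column, and the surviving scalar is $\big(p_{i,j}(X_0)\big)_{1,d_j}=1$); here $\chr\F=0$ guarantees $-2\neq0$. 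Thus, as soon as the two outer terms vanish, the $(i-1,j+1)$-block equals $-2\,e_{d_{i-1}}e_1^{\mathrm T}$, which has rank $1$, and we are done.

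It then remains to see that the hypotheses kill the first and third terms. The key elementary fact is that $C(i-1)C(i)=0$ exactly when $d_i>1$, and $C(j-1)C(j)=0$ exactly when $d_j>1$: a product of two consecutive canonical blocks is nonzero only if the middle block size equals $1$. Hence the first term vanishes whenever $d_i>1$ \emph{or} $r_{i+1,j+1}=0$ (the latter forcing $(X_0)_{(i+1,j+1)}=0$), and the third term vanishes whenever $d_j>1$ \emph{or} $r_{i-1,j-1}=0$. Each of the four listed cases (a), (b1), (b2), (c) is precisely one of the four ways of satisfying both of these disjunctions at once, so in every case both outer terms drop out.

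I expect the point requiring the most care to be the bookkeeping of the second paragraph: checking that the $(i-1,j+1)$-block of $(\ad E)^2X_0$ receives contributions only from the three blocks $(X_0)_{(i-1,j-1)}$, $(X_0)_{(i,j)}$, $(X_0)_{(i+1,j+1)}$ (this is exactly where the block-degree count, or homogeneity of $X_0$, is used), and pinning down both the coefficient $-2$ and the position $(d_{i-1},1)$ of the surviving term, together with the two vanishing criteria for the canonical products. The verifications for (b2) and (a) mirror those for (b1) and (c) under the left/right symmetry of the construction, so only (a), (b1) and (c) need to be written out; implicit throughout is that $2\le i$ and $j\le k-1$, which is what makes the block $(i-1,j+1)$ and the matrices $C(i-1),C(j)$ meaningful.
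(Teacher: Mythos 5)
Your proof is correct and is essentially the paper's own argument: both rest on the observation that under each of (a), (b1), (b2), (c) the outer contributions to the $(i-1,j+1)$-block of $(\ad E)^2X$ vanish, leaving the middle term $-2\,C(i-1)\,p_{i,j}(X)\,C(j)$ with the nonzero entry $-2\big(p_{i,j}(X)\big)_{1,d_j}$ in position $(d_{i-1},1)$. The only cosmetic difference is that you take $X$ in dominant form and read off a rank-$1$ block directly, while the paper applies the identity to an arbitrary $X$ with $\big(p_{i,j}(X)\big)_{1,d_j}\neq 0$ and concludes via Proposition \ref{prop.F0b}.
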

\begin{proof}
Any of these conditions implies that, for any $X\in \h$,
\[
\big(p_{i-1,j+1}([[X,E],E]]\big)_{d_{i-1},1}=-2\big(p_{i,j}(X)\big)_{1,d_j}.
\]
Since $r_{i,j}=1$, it follows from Proposition \ref{prop.dominante} that there exists
 $X\in \h$ such that $\big(p_{i,j}(X)\big)_{1,d_j}\neq 0$, and thus $\big(p_{i-1,j+1}([[X,E],E]]\big)_{d_{i-1},1}\neq 0$.
 Now Proposition \ref{prop.F0b} implies $r_{i-1,j+1}=1$.
\end{proof}
\begin{prop}\label{adj}
 If $r_{i,j}=2$, then $r_{i+1,j}\neq 2$ and $r_{i,j-1}\neq 2$.
\end{prop}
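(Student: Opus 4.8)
The plan is to prove both assertions at once, by induction on the width $j-i$ of the block, using each assertion to feed the other at smaller width. The base cases $j-i\le 2$ are immediate: then $r_{i+1,j}$ (resp.\ $r_{i,j-1}$) is a superdiagonal rank, hence equal to $1$. So suppose $j-i\ge 3$ and that both assertions hold for all blocks of smaller width. I will spell out the vertical assertion $r_{i,j}=2\Rightarrow r_{i+1,j}\ne 2$; the horizontal one is entirely symmetric (swap the two neighbours and apply the inductive hypothesis to $(i,j-1)$ instead of $(i+1,j)$, or transport it through the automorphism $\phi$ applied to the reversed tuple $(d_k,\dots,d_1)$, which interchanges the two statements).

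First I would record the key corner identity. Reindexing the computation carried out in the proof of Proposition \ref{useful} (replacing $(i,j)$ there by $(i+1,j-1)$) gives, whenever $d_{i+1},d_{j-1}\ge 2$,
\[
\big(p_{i,j}([[X,E],E])\big)_{d_i,1}=-2\,\big(p_{i+1,j-1}(X)\big)_{1,d_{j-1}}\qquad\text{for all }X\in\h .
\]
Since $r_{i,j}=2\ne 1$, Proposition \ref{prop.F0b} forces the bottom-left corner $\big(p_{i,j}(Z)\big)_{d_i,1}$ to vanish for every $Z\in\h$, in particular for $Z=[[X,E],E]$. Hence the left-hand side is identically $0$, so the top-right corner of the inner block vanishes: $\big(p_{i+1,j-1}(X)\big)_{1,d_{j-1}}=0$ for all $X$. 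By the dominant form \eqref{eq.dominante1} of Proposition \ref{prop.dominante}, a block with $r_{i+1,j-1}=1$ would admit an element whose only nonzero entry is precisely this top-right corner; therefore $r_{i+1,j-1}\ne 1$.

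Now suppose, for contradiction, that $r_{i+1,j}=2$. If $d_{i+1}=1$ then $p_{i+1,j}(\h)$ consists of $1\times d_j$ blocks and $r_{i+1,j}\le 1$, so we may assume $d_{i+1}\ge 2$; together with $d_{j-1}\ge 2$ this is the generic range in which the identity above applies. The block $(i+1,j)$ has width $j-i-1$, so the inductive hypothesis for the horizontal assertion, applied to it, yields $r_{i+1,j-1}\ne 2$. Combined with $r_{i+1,j-1}\ne 1$ from the previous step, this gives $r_{i+1,j-1}=0$, i.e.\ $p_{i+1,j-1}(X)=0$ for all $X$. If moreover $r_{i+2,j}\ne 0$, then Proposition \ref{prop.F1}(a1) (with $(i,j)$ replaced by $(i+1,j-1)$) forces $r_{i+1,j}=1$, a contradiction; hence $r_{i+2,j}=0$ as well. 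But then, from $p_{i+1,j}([E,X])=C(i+1)p_{i+2,j}(X)-p_{i+1,j-1}(X)C(j-1)$, both terms vanish for every $X$, so $p_{i+1,j}([E,X])=0$ identically, and the argument of Proposition \ref{prop.r012} gives $r_{i+1,j}=0$, contradicting $r_{i+1,j}=2$. This proves $r_{i+1,j}\ne 2$.

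The step I expect to be the main obstacle is the degenerate boundary $d_{j-1}=1$ (and, for the horizontal assertion, $d_{i+1}=1$), where the clean identity above is no longer valid: bracketing twice on the right ``jumps over'' the size-one block and produces a surviving extra term $\big(p_{i,j-2}(X)\big)_{d_i,d_{j-2}}$. Here the inner block $(i+1,j-1)$ is automatically of rank $\le 1$, so the only case to exclude is $r_{i+1,j-1}=1$. I would handle it with the refined identity, which (using $r_{i,j}=2$) turns the extra term into the relation $\big(p_{i,j-2}(X)\big)_{d_i,d_{j-2}}=2\big(p_{i+1,j-1}(X)\big)_{1,1}$; feeding this back into $p_{i,j-1}([E,X])$ shows that the bottom-left corner of $(i,j-1)$ can be made nonzero, forcing $r_{i,j-1}=1$ by Proposition \ref{prop.F0b}, after which a suitable case of Proposition \ref{useful} (together with Proposition \ref{prop.F1}) produces the contradiction $r_{i,j}=1$. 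Carrying out this boundary analysis cleanly, and verifying that the small-index degeneracies of the auxiliary blocks $(i+2,j)$ and $(i,j-2)$ do not escape the induction, is the delicate part of the argument.
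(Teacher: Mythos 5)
Your generic-case argument is correct, and it is genuinely different from the paper's: where the paper takes a minimal counterexample and, after a long chain of rank determinations ($r_{1,k-2}=r_{2,k-1}=1$, then $d_2=1$, $r_{2,k}=1$, $r_{3,k}=2$, $r_{3,k-1}=1$), reaches a contradiction via the triple-bracket computation $\big(p_{1,k}([[[Y,E],E],E])\big)_{d_1,1}=3$, you combine the double-bracket corner identity (correctly reindexed from the proof of Proposition \ref{useful}) with Propositions \ref{prop.F0b} and \ref{prop.dominante} to get $r_{i+1,j-1}\neq 1$, the inductive hypothesis to get $r_{i+1,j-1}\neq 2$, hence $r_{i+1,j-1}=0$; then $r_{i+2,j}=0$ via Proposition \ref{prop.F1}(a1); and finally $p_{i+1,j}([E,X])=0$ for all $X$ together with the generation argument from the proof of Proposition \ref{prop.r012} forces $r_{i+1,j}=0$, a contradiction. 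Each of these steps checks out, and your reduction of the horizontal assertion to the vertical one through $\phi$ is exactly the paper's move.

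The gap is the boundary case $d_{j-1}=1$, which you defer and only sketch --- and it is not a marginal degeneracy but the heart of the proposition: your generic argument in effect proves that $r_{i,j}=r_{i+1,j}=2$ forces $d_{j-1}=1$ (the mirror, under $\phi$, of the paper's deduction $d_2=1$ via Proposition \ref{useful}(a)), so every putative counterexample lives precisely in the regime you postponed. Your sketch legitimately reaches $r_{i,j-1}=1$: the refined identity $\big(p_{i,j-2}(X)\big)_{d_i,d_{j-2}}=2\big(p_{i+1,j-1}(X)\big)_{1,1}$ and the resulting computation $\big(p_{i,j-1}([E,X])\big)_{d_i,1}=-\big(p_{i+1,j-1}(X)\big)_{1,1}$ are fine. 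But the closing appeal to ``a suitable case of Proposition \ref{useful} together with Proposition \ref{prop.F1}'' does not go through: no case of Proposition \ref{useful} applies at $(i+1,j-1)$ without knowing $r_{i,j-2}=0$ or $r_{i+2,j}=0$, neither of which you have established, and Proposition \ref{prop.F1}(c1) applied to $r_{i,j-1}=1$, $r_{i+1,j}=2$ yields the contradiction $r_{i,j}=1$ only when $d_i+1\neq d_j$, leaving the case $d_j=d_i+1$ entirely open. The paper's proof shows what is actually required at this point: a further escalation two blocks deep ($r_{3,k}\neq 0$; then $r_{3,k}=2$, where the possibility $r_{3,k}=1$ is excluded by the diagonal-degree argument of Proposition \ref{prop.degrees} using $1=d_2<2\le d_k$; then $r_{3,k-1}=1$), capped by a triple bracket whose corner entry is $3\neq 0$ in characteristic zero --- a computation with no analogue in your sketch. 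Until this boundary analysis is carried out, the proof is incomplete.
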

\begin{proof}
 We suppose the result is false and select $(i,j)$ such that $r_{i,j}=2$ as well as $r_{i+1,j}=2$ or  $r_{i,j-1}=2$,
with $j-i$ as small as possible. Since  $r_{s,s+1}=1$ for all $s$, we have $j-i>2$.
We may assume without loss of generality that $i=1,j=k$ and so $r_{1,k}=2$, with $r_{1,k-1}=2$ or $r_{2,k}=2$.

Suppose first $r_{1,k-1}=2$. By the minimality of $k-1$, we have
$r_{1, k-2},r_{2, k-1}\neq 2$ and, since $r_{1,k-1}=2$,
Proposition \ref{prop.F1} (a1), (a2), implies that
\[
r_{1, k-2},r_{2, k-1}=1.
\]
 Since $r_{1,k-1}=2$ then $d_{k-1}>1$ and hence, since $r_{2,k-1}=1$ and $r_{1,k}=2$,
 Proposition \ref{useful} (a) implies $d_2=1$ and thus $r_{2,k}\ne 2$.
 Since $r_{1,k}=2$, Proposition \ref{prop.F1} (a2) implies  $r_{2,k}=1$.

 Now we have  $r_{3,k}\neq 0$ since, otherwise,
 Proposition \ref{useful} (b1), applied to $(i,j)=(2,k-1)$ would imply that $r_{1,k}=1$.
 Moreover, we claim $r_{3,k}=2$.

 If $r_{3,k}=1$ we can find a homogeneous $X\in \h\cap\mathcal{D}_t$
 such that $p_{3,k}(X)$ is as stated in Proposition \ref{prop.dominante}, that is
 $\big(p_{3,k}(X)\big)_{1,d_k}=1$.
 Since $1=d_2<2\le d_k$, Proposition \ref{prop.degrees} implies $p_{2,k-1}(X)=0$.
 Since $r_{1,k-1}=2$, Proposition \ref{prop.F1} implies $p_{1,k-2}(X)=0$.
 Therefore
 \[
 p_{1,k-1}([X,E])=0\quad\text{ and }\quad p_{2,k}([X,E])\ne 0
 \]
 and, once again,  Proposition \ref{prop.F1} implies $r_{1,k}=1$, a contradiction.
 We have proved that $r_{3,k}=2$ and hence $d_3\ge 2$,
 $r_{3,k-1}\neq 2$ by the minimality of $k-1$,
 and $r_{3,k-1}\neq 0$ by Proposition \ref{prop.F1}.
 Therefore $r_{3,k-1}=1$ and, it follows from Proposition \ref{prop.dominante}
 that there is a homogeneous  $Y\in \h$ as in \eqref{eq.dominante1},
 that is with  $\big(p_{3,k-1}(Y)\big)_{1,d_{k-1}}=1$.
 Taking into account that $d_3,d_{k-1}\ge 2$ it is not difficult to see that
\[
\big(p_{1,k}([[[Y,E],E],E]))\big)_{d_1,1}=3
\]
which by Proposition \ref{prop.F0b} implies that $r_{1,k}=1$, a contradiction.

Suppose next $r_{2,k}=2$. Let $\h'=\phi(\h)$, the Lie algebra associated to $\phi(\vec d)=(d_k,\dots,d_1)$, $\phi(C)$ and $\phi(D)$,
and denote by $r'_{i,j}$ the corresponding ranks. Then $r_{1,k}=r_{2,k}=2$ imply $r'_{1,k}=r'_{1,k-1}=2$, which was shown above to be impossible.
\end{proof}

\begin{prop}\label{sym1}
$r_{i,j}=2$ implies $d_i=d_j>1$.
\end{prop}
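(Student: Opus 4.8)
The plan is to prove $r_{i,j}=2 \Rightarrow d_i=d_j>1$ by exploiting the symmetry automorphism $\phi$ together with the structural results on ranks already established, especially Propositions \ref{prop.F1}, \ref{useful} and \ref{adj}. First I would record the easy half: if $r_{i,j}=2$ then certainly $j-i\ge 2$ (since $r_{s,s+1}=1$), and I would rule out $d_i=1$ or $d_j=1$. Suppose, for contradiction, $d_i=1$. Then any $X\in\h$ has $p_{i,j}(X)$ a single row, so $\rk(p_{i,j}(X))\le 1$, forcing $r_{i,j}\le 1$; symmetrically $d_j=1$ is impossible. Thus $r_{i,j}=2$ already forces $d_i,d_j>1$, and the real content is the equality $d_i=d_j$.

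\smallskip

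To obtain $d_i=d_j$ I would argue by a minimal-counterexample / induction on $j-i$, mirroring the bookkeeping of Proposition \ref{adj}. Choose $(i,j)$ with $r_{i,j}=2$ but $d_i\ne d_j$ and $j-i$ minimal; as above $j-i\ge 2$, and WLOG (after translating indices) take $i=1$, $j=k$. The aim is to produce an element of $\h$ witnessing $r_{1,k}=1$, contradicting $r_{1,k}=2$. I would first analyze the boundary ranks $r_{2,k}$ and $r_{1,k-1}$. By Proposition \ref{adj} we already know $r_{2,k}\ne 2$ and $r_{1,k-1}\ne 2$; and since $r_{1,k}=2\ne 0$, Proposition \ref{prop.F1}(a1),(a2) forces $r_{2,k},r_{1,k-1}\in\{1\}$ (they cannot be $0$, for $r_{2,k}=0$ together with $r_{1,k}\ne 0$ would already force $r_{1,k}=1$ via the first sentence of Proposition \ref{prop.F1}, and symmetrically for $r_{1,k-1}$). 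So both corner ranks equal $1$. Now I would feed the sizes into Proposition \ref{useful}: with $r_{2,k}=1$ and $r_{1,k-1}=1$, and using $d_1,d_k>1$ from the first paragraph, I expect to derive a relation between $d_1$ and $d_k$. The crucial lever is that $r_{1,k}=2$ should be \emph{forbidden} unless the two overlapping rank-$1$ blocks ``line up'', which is exactly the diagonal-degree matching condition $d_1=d_k$ appearing in Proposition \ref{prop.F1}(b),(c1),(c2).

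\smallskip

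Concretely, the mechanism I would use is the contrapositive of Proposition \ref{prop.F1}(b): if $r_{1,k-1}=r_{2,k}=1$ (these are the two length-$(k-1)$ subblocks sitting inside the $(1,k)$ block) with the relevant size-offset condition $d_1\ne d_k$, then the dominant homogeneous witnesses $X,Y$ from Proposition \ref{prop.dominante} for these two subblocks live in distinct diagonal degrees $t_X\ne t_Y$; bracketing the appropriate one with $E$ produces a single nonzero entry in the $(1,k)$ block, giving a rank-$1$ element there and hence $r_{1,k}=1$. This is precisely the argument-shape already executed in the proof of Proposition \ref{adj}, and I would reuse it, with the two subblocks of the $(1,k)$ block playing the roles that $r_{2,k-1}$ and the corner blocks played there. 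The hard part will be the bookkeeping for the intermediate ranks $r_{2,k-1}$ and the propagation of the size constraints: I must verify that the minimality of $j-i$ rules out $r_{2,k-1}=2$ (this subblock has strictly smaller length so the inductive hypothesis applies and it satisfies the conclusion, i.e.\ either $r_{2,k-1}\le 1$ or $d_2=d_{k-1}>1$), and then check that no configuration of $r_{2,k-1}\in\{0,1\}$ with $d_1\ne d_k$ can coexist with $r_{1,k}=2$ without triggering one of the rank-$1$ implications. I expect this case-split — especially reconciling the diagonal-degree offsets produced by Proposition \ref{prop.dominante} across the nested blocks — to be the main obstacle, but the symmetry $\phi$ (which swaps $(i,j)\leftrightarrow(d{+}1{-}j,d{+}1{-}i)$ and hence the two corner configurations) should let me halve the casework exactly as in Proposition \ref{adj}.
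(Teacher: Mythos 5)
Your proposal is correct and is essentially the paper's own proof: Proposition \ref{adj} rules out the value $2$ for the corner ranks $r_{i,j-1}$ and $r_{i+1,j}$, Proposition \ref{prop.F1} rules out the value $0$, and then the contrapositive of Proposition \ref{prop.F1}(b) applied to the pair $r_{i,j-1}=r_{i+1,j}=1$ forces $d_i=d_j$ (with $d_i,d_j>1$ immediate from the rank). The minimal-counterexample induction and the anticipated casework on $r_{2,k-1}$ are unnecessary scaffolding --- once the two corner ranks equal $1$, Proposition \ref{prop.F1}(b) closes the argument directly, with no inductive hypothesis needed.
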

\begin{proof}
Since $r_{i,j}=2$, we have $d_i,d_j>1$. We must show that $d_i=d_j$.
 We know by Proposition \ref{adj} that $r_{i,j-1}, r_{i+1,j}\neq 2$.
 Also, by Proposition \ref{prop.F1}, it follows that $r_{i,j-1}, r_{i+1,j}\neq 0$.
 Therefore $r_{i,j-1}= r_{i+1,j}=1$.
 Now Proposition \ref{prop.F1} (b) implies $d_i=d_j$.
\end{proof}
\begin{prop}\label{sym2}
If $r_{u,v}=0$ then $d_u=1$ or $d_v=1$.
\end{prop}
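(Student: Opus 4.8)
The plan is to prove the contrapositive of Proposition \ref{sym2}: if $d_u>1$ and $d_v>1$ then $r_{u,v}\neq 0$, arguing by induction on $v-u$. The inductive hypothesis will be that for every pair $(i,j)$ with $j-i<v-u$ one has $r_{i,j}\neq 0$ whenever $d_i,d_j>1$. The base case $v-u=1$ is immediate, since $r_{u,u+1}=1$. Throughout I work with $\h=\h(0,0,C)$ and $E=E(C)$, as fixed in this subsection.

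The engine of the induction is a \emph{propagation} identity. Expanding the $(u,v)$-block of the commutator $[X,E]$ and keeping only the intermediate indices $s=v-1$ and $s=u+1$ (the only ones for which $E$ contributes), one gets, for every $X\in\h$,
\[
\big(p_{u,v}([X,E])\big)_{1,1}=\big(p_{u,v-1}(X)\,C(v-1)\big)_{1,1}-\big(C(u)\,p_{u+1,v}(X)\big)_{1,1}=\big(p_{u,v-1}(X)\big)_{1,d_{v-1}},
\]
where the second summand vanishes because the only nonzero row of $C(u)$ is row $d_u>1$. Hence, as soon as the top-right corner of the block $(u,v-1)$ is attained by some $X\in\h$, we conclude $r_{u,v}\neq0$; by Proposition \ref{prop.dominante} this corner is attained whenever $r_{u,v-1}=1$. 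The $\phi$-dual of this computation (replacing $E$ by its image and using $d_v>1$) gives the symmetric statement, propagating $r_{u,v}\neq0$ from the block $(u+1,v)$ on the right.

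I would then organize the inductive step according to the pair $(r_{u,v-1},r_{u+1,v})$. If exactly one of them is zero, Proposition \ref{prop.F1}(a1)/(a2) already yields $r_{u,v}=1$. If both are nonzero, the propagation identity applies directly whenever one of them equals $1$; it also applies when one equals $2$ with the corresponding outer dimension strictly larger than $2$, reading off the $(2,1)$-entry instead of the $(1,1)$-entry and using the rank-$2$ dominant form of Proposition \ref{prop.dominante}. The only residual subcase is $r_{u,v-1}=2$ with $d_u=2$ together with $r_{u+1,v}=2$ with $d_v=2$; here Proposition \ref{sym1} forces $d_{u+1}=d_{v-1}=2$, Proposition \ref{adj} gives $r_{u+1,v-1}\neq2$, the inductive hypothesis gives $r_{u+1,v-1}\neq0$, so $r_{u+1,v-1}=1$, and Proposition \ref{useful}(a) then yields $r_{u,v}=1$.

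The delicate case is $r_{u,v-1}=r_{u+1,v}=0$. Here the inductive hypothesis forces $d_{u+1}=d_{v-1}=1$, and the identity above shows that bracketing with $E$ never reaches the block $(u,v)$, so this block can only be produced by routing through a strictly interior index. If every interior dimension $d_{u+1},\dots,d_{v-1}$ were $1$, then iterating the propagation identity (each intermediate block $(u,w)$ being $d_u\times1$, hence of rank $1$ with attainable corner) would already give $r_{u,v-1}\neq0$, a contradiction; so some interior $s$ has $d_s>1$, and then the blocks $(u,s)$ and $(s,v)$ have both outer dimensions $>1$, whence $r_{u,s},r_{s,v}\neq0$ by induction. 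A bracket of two elements supported on these blocks should then produce a nonzero $(u,v)$-block. I expect this final combination to be the main obstacle: one must guarantee that the contribution routed through $s$ survives the cancellations coming from the other intermediate indices, and one must cope with rank-$2$ blocks whose top-right corner is \emph{not} attainable (as already happens for $\vec d=(2,1,2)$, where the block is a nonzero multiple of $-I$). As in the proofs of Propositions \ref{prop.dominante} and \ref{useful}, I would resolve this by tracking specific matrix entries rather than ranks alone.
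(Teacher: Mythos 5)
Your reduction of the inductive step is correct and, in the non-degenerate configurations, cleaner than the paper's own case analysis: the propagation identity $\big(p_{u,v}([X,E])\big)_{1,1}=\big(p_{u,v-1}(X)\big)_{1,d_{v-1}}$ (valid since $d_u>1$ kills the term $C(u)\,p_{u+1,v}(X)$), together with its $\phi$-dual and the dominant forms of Proposition \ref{prop.dominante}, disposes uniformly of every configuration in which $r_{u,v-1}$ or $r_{u+1,v}$ equals $1$, where the paper argues the pairs $(1,1)$, $(2,1)$, $(1,2)$ separately; your residual subcase $r_{u,v-1}=r_{u+1,v}=2$ with $d_u=d_v=2$ is settled exactly as in the paper via Propositions \ref{sym1}, \ref{adj} and \ref{useful}; and your observation that if all interior dimensions equal $1$ then iterating the identity along $(u,u+1),(u,u+2),\dots$ contradicts $r_{u,v-1}=0$ is sound.

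The genuine gap is the case you yourself flag: $r_{u,v-1}=r_{u+1,v}=0$ with some interior $d_s>1$. Knowing $r_{u,s},r_{s,v}\neq 0$ does not yield a nonzero $(u,v)$-block of a bracket: $p_{u,v}([Y,Z])$ is an alternating sum over \emph{all} intermediate block indices, the elements of $\h$ are supported on whole block diagonals, and the paper's main theorem shows that in the odd-symmetric $\phi$-invariant situation all such contributions really do cancel (your own example $\vec d=(2,1,2)$ already exhibits a block whose corner is unattainable). So ``the contribution routed through $s$ survives'' is exactly what must be proved, and no mechanism is proposed to exclude the cancellation; entry-tracking alone, with no further structural input, is not a plan that visibly closes. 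The paper takes an entirely different route here. It first enlarges the vanishing to a maximal band: letting $j_0$ be the largest $j$ with $r_{i,k-j+i}=0$ for all $i=1,\dots,j$, minimality gives $d_j=d_{k+1-j}=1$ for $2\le j\le j_0$, while maximality and Proposition \ref{prop.F1} force all blocks at block-degree $k-(j_0+1)$ to be nonzero. It then picks $0\neq X\in\h\cap\bar{\mathcal{D}}_{k-(j_0+1)}$ with $[D,X]=0$; the band forces $[E,X]=0$, so $X$ commutes with $D+E$, which for the canonical sequence is a \emph{single} nilpotent Jordan block, whence $X$ is a polynomial in $D+E$. The $1\times 1$ block $(2,k+1-j_0)$ then pins $X$ down to a single power $a(E+D)^{k-(j_0+1)}$, so $(E+D)^{k-(j_0+1)}=E^{k-(j_0+1)}$; but $(E+D)^{k-(j_0+1)}e_d=e_{d-(k-(j_0+1))}\neq 0$ while $d_k>1$ gives $E^{k-(j_0+1)}e_d=0$, a contradiction. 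This centralizer argument, exploiting the global regular-nilpotent structure of $D+E$ rather than block-by-block bracketing, is the missing idea; without it, your final step remains a hope rather than a proof.
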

\begin{proof}
We suppose the result is false and select $(u,v)$ such that $r_{u,v}=0$, $d_u>1$ and $d_v>1$,
with $v-u$ as small as possible.
We may assume without loss of generality that $u=1,v=k$ and so $d_1,d_k>1$. We claim that $r_{1,k-1}=r_{2,k}=0$.

\smallskip
\noindent
Case  $r_{1,k-1}=0$, $r_{2,k}\ne 0$; or  $r_{1,k-1}\ne 0$, $r_{2,k}=0$:
Impossible by Proposition \ref{prop.F1}.

\smallskip
\noindent
Case  $r_{1,k-1}=r_{2,k}=1$: It follows from Proposition \ref{prop.F1} (b) that
$d_1=d_k$ and it is clear that if $d_1\neq 1$ then $r_{1,k}\neq 0$, thus $d_1=1$.

\smallskip
\noindent
Case $r_{1,k-1}=r_{2,k}=2$: This implies that $d_1,d_2,d_{k-1},d_k\ge 2$.
Consider $r_{2,k-1}$. It is not 0 by Proposition \ref{prop.F1} and it
 cannot be 2 by Proposition \ref{adj}.
Hence $r_{2,k-1}=1$ and now Proposition \ref{useful} implies $r_{1,k}=1$
contradicting our hypothesis.

\smallskip
\noindent
Case $r_{1,k-1}=2, r_{2,k}=1$:
It follows from Propositions \ref{adj} and \ref{prop.F1} (a1), (a2) that $r_{2,k-1}=1$.
Since $r_{1,k-1}=2$, we have $d_1=d_{k-1}>1$ by Proposition \ref{sym1}. We cannot have $d_2>1$, for otherwise
$r_{1,k}=1$ by Proposition \ref{useful}. Thus $d_2=1$. Since $r_{2,k}=1$, Proposition \ref{prop.dominante}
ensures the existence of $X\in\h$ such that $p_{2,k}(X)=(0,\dots,0,1)$. If $d_k>1$ then $p_{1,k}([E,X])\neq 0$.
But $r_{1,k}=0$, so $d_k=1$.

\smallskip
\noindent
 Case $r_{1,k-1}=1, r_{2,k}=2$: This case is symmetric to the one above.

\smallskip

This proves the claim that $r_{1,k-1}=r_{2,k}=0$. By the minimality of $k-1$ and the fact that $d_1,d_k>1$, we infer
$d_2=d_{k-1}=1$.

Let $j_0$ be the largest $j$ such that
\[
r_{i,k-j+i}=0\text{ for all $i=1,\dots,j$}.
\]
Clearly $2\le j_0\le k-2$ and, again,
the minimality of $k-1$ implies
\begin{equation}\label{eq.d=1}
d_j=d_{k+1-j}=1\text{ for all $2\le j\le j_0$}.
\end{equation}
Since, by definition of $j_0$, we have
$r_{i,k-(j_0+1)+i}\ne0$ for some $i$,
it follows from Proposition \ref{prop.F1}
(a1) or (a2) that in fact
$r_{i,k-(j_0+1)+i}\ne 0$
for all $i=1,\dots,j_0+1$.
Moreover, \eqref{eq.d=1} implies that
\[
r_{i,k-(j_0+1)+i}=1\text{ for all $i=2,\dots,j_0$}.
\]
 Let
$X\in \h\cap\bar{\mathcal{D}}_{k-(j_0+1)}$, $X\ne0$
such that $[D,X]=0$. Since $r_{i,k-j_0+i}=0$ for all $i=1,\dots,j_0$, we must have
$[E,X]=0$ and thus $[E+D,X]=0$. But $D+E$ is a Jordan block, so $X$ is a polynomial in $E+D$, that is,
$X=X_1+\cdots+X_{d-1}$, where $X_i\in\mathcal{D}_i$ and all non-zero entries of $X_i$, if any, are identical.
But block $(2,k+1-j_0)$ has size $1\times 1$ and $X\in\bar{\mathcal{D}}_{k-(j_0+1)}$, so all entries of $X$ in row 2
outside of column $k+1-j_0$ are equal to 0. Thus $X=X_{k-(j_0+1)}=a(E+D)^{k-(j_0+1)}$ for some $a\neq 0$, and therefore $(E+D)^{k-(j_0+1)}\in\bar{\mathcal{D}}_{k-(j_0+1)}$.
But $E\in\bar{\mathcal{D}}_{1},D\in\bar{\mathcal{D}}_{0}$, so $(E+D)^{k-(j_0+1)}=E^{k-(j_0+1)}$. If $e_1,\dots,e_d$ is the canonical
basis of $\C^d$, then $(E+D)^{k-(j_0+1)}e_d=e_{d-(k-(j_0+1))}\neq 0$. But $d_k>1$ implies $E^{k-(j_0+1)}e_d=0$.
\end{proof}

Now we can prove the crucial step.
\begin{prop}\label{crucial} Let $k\ge 2$ and $\vec{d}=(d_1,\dots,d_{k})$.
Then
 \begin{enumerate}
\item \label{sym3}  If $r_{2,k-1}=2$ and $d_1=d_k=1$ then $r_{1,k}=0$.
\item \label{sym4}  If $r_{1,k}=0$ then $d_1=1$ and $d_k=1$.
\item \label{cru} If $r_{2,k-1}=1$ then $r_{1,k}=1$, unless $k=4$ and $\vec{d}=(1,1,1,1)$.
\item \label{vuelta2} If $r_{1,k}=2$ then $k$ is odd
and $\vec{d}$ is odd-symmetric  with $d_1=d_{k}>1$.
\item \label{vuelta0} If $r_{1,k}=0$ then either $k$ is even and $\vec{d}=(1,\dots,1)$, or  $k$ is odd
and $\vec{d}$ is odd-symmetric with $d_1=d_{k}=1$.
\end{enumerate}
 \end{prop}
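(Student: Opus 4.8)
The plan is to establish the five assertions (1)--(5) simultaneously by strong induction on $k$. The mechanism that makes the induction possible is the observation that, for the canonical sequence, the invariant $r_{i,j}$ attached to $\vec d=(d_1,\dots,d_k)$ equals the invariant $r_{1,\,j-i+1}$ attached to the truncated sequence $(d_i,\dots,d_j)$: because $E$ raises the block index by exactly one and $D$ preserves it, the $(i,j)$-block of any element of $\h$ is assembled only from blocks indexed by $i,\dots,j$. This both legitimizes the recurring ``without loss of generality $i=1$, $j=k$'' reductions and lets me feed each assertion, applied to the interior sequence $(d_2,\dots,d_{k-1})$, back into the induction. The cases $k=2,3$ serve as base: for $k=2$ one has $r_{1,2}=1$, so all five statements are vacuous or inapplicable, while for $k=3$ the assertions reduce to computing $r_{1,3}$ from the parity of $d_2$, a direct $\sl(2)$-style calculation read off from Propositions \ref{prop.F0a} and \ref{ida}.

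Granting the induction hypothesis for all shorter sequences, the engine of the inductive step (for $k\ge4$) is the trichotomy $r_{2,k-1}\in\{0,1,2\}$. I would first dispose of the clean assertions. For (1), its hypothesis $r_{2,k-1}=2$ lets the induction hypothesis for (4) make $(d_2,\dots,d_{k-1})$ odd-symmetric of odd length; adjoining the outer blocks $d_1=d_k=1$ makes the full $\vec d$ odd-symmetric, so $C$ is $\phi$-invariant and Proposition \ref{ida} (using $d_1=1$) forces $p_{1,k}(\h)=0$, i.e. $r_{1,k}=0$. Assertion (5) is then a corollary of (2),(3),(4): once (2) has reduced matters to $d_1=d_k=1$, the same trichotomy sends $r_{2,k-1}=1$ to the exceptional all-ones sequence of (3), $r_{2,k-1}=2$ to odd-symmetry via (4), and $r_{2,k-1}=0$ to one of the two prescribed shapes via the induction hypothesis for (5).

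The technical heart lies in (2), (3) and (4), each of which I would reduce, via Propositions \ref{sym1}, \ref{sym2}, \ref{adj}, \ref{prop.F1} and \ref{useful}, to a short list of residual configurations settled by explicit bracketing. For (3), starting from $Y$ with $(p_{2,k-1}(Y))_{1,d_{k-1}}=1$ (Proposition \ref{prop.dominante}), I would produce a non-zero $(1,k)$-corner via the double bracket $[[Y,E],E]$ and conclude $r_{1,k}=1$ through Proposition \ref{prop.F0b}; when $d_2,d_{k-1}>1$ this is Proposition \ref{useful}(a), and the remaining work is to push the construction through the boundary cases where some interior block has size one, and to verify that it degenerates precisely on $\vec d=(1,1,1,1)$ (where $\h=\F E$ and the corner vanishes). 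For (4), Proposition \ref{sym1} gives $d_1=d_k>1$ and Propositions \ref{adj}, \ref{prop.F1} give $r_{1,k-1}=r_{2,k}=1$; the case $r_{2,k-1}=1$ is excluded by (3), and the case $r_{2,k-1}=2$ is closed by the induction hypothesis for (4) exactly as in (1).

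The step I expect to be the main obstacle is the treatment of the residual configurations that the symmetry lemmas leave undecided precisely because those lemmas are silent when $d_1=d_k$. In (4) this is the case $r_{2,k-1}=0$ with $r_{1,k}=2$: the induction hypotheses for (2) and (5) pin the interior down to $(1,\dots,1)$ of even length, so $\vec d=(m,1,\dots,1,m)$ with $m=d_1=d_k>1$ and $k$ even, a symmetric but not odd-symmetric sequence on which Proposition \ref{prop.F1}(b) gives no information; one must then compute directly along the chain of size-one interior blocks that the $(1,k)$-corner of every element of $\h$ has rank at most one, contradicting $r_{1,k}=2$. In (2) the analogous obstruction is the subcase $r_{1,k-1}=1$, $r_{2,k}=2$ (with $d_1=1$ and $r_{1,k}=0$), which one excludes by exhibiting an explicit $X\in\h$ with $p_{2,k}(X)\neq0$ but $p_{1,k-1}(X)=0$, so that the first part of Proposition \ref{prop.F1} forces $r_{1,k}=1$, a contradiction. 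In both cases the difficulty is that the generic rank-propagation rules degenerate exactly along symmetric sequences, so the corner rank must be pinned down by hand; by contrast, whenever the interior comes out odd-symmetric the full sequence is already odd-symmetric and no such computation is required.
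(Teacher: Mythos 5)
Your overall skeleton --- simultaneous strong induction on $k$, the trichotomy on $r_{2,k-1}$, the truncation principle $r_{i,j}(\vec d)=r_{1,j-i+1}(d_i,\dots,d_j)$ (which the paper uses implicitly in Propositions \ref{adj} and \ref{sym2}), and the treatment of part (1) via the induction hypothesis on (4) plus Proposition \ref{ida} --- coincides with the paper's proof. But at each of the three points you yourself flag as the technical heart, your plan either rests on a false claim or presupposes exactly what must be proven. For part (3), the double bracket $[[Y,E],E]$ only works under the hypotheses of Proposition \ref{useful}, and your assertion that the construction ``degenerates precisely on $\vec d=(1,1,1,1)$'' is wrong: the relevant corner entries are differences of binomial coefficients of the form $\binom{\beta}{q}-\binom{\beta}{q-1}$, and these cancel whenever the interior window is symmetric about the center (the paper's Case 1 with $j=k+1-i$, and Case 2 with $i=(k+1)/2$ and $d_i$ odd); those degenerations are eliminated not by salvaging the bracket but by showing the configuration is odd-symmetric and hence contradicts $r_{2,k-1}=1$ via Proposition \ref{ida}. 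Moreover the subcase $d_1=d_k>1$ with some interior $d_j>1$ (the paper's Case 3) is invisible to your plan: it requires a reduction modulo a ``hook'' ideal to the sequence $(1,d_2,\dots,d_{k-1},1)$, plus a separate computation for $\vec d=(d_1,1,1,d_4)$.

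For part (2), your proposal to ``exhibit an explicit $X$ with $p_{2,k}(X)\neq 0$ and $p_{1,k-1}(X)=0$'' begs the question: the residual case survives Proposition \ref{prop.F1} (c1)/(c2) precisely because the block sizes satisfy $d_1=d_k\pm 1$, and then diagonal-degree bookkeeping (Proposition \ref{prop.degrees}) no longer forces either block of a homogeneous element to vanish, so no such $X$ is available a priori. The paper instead derives $d_1=2$, $d_2=\cdots=d_{l-1}=1$, $d_k=1$ and reaches a contradiction by the long argument showing $r_{l,k-1}\notin\{0,1,2\}$, using binomial expansions of $\ad(E)^{l}$ and $\ad(E)^{l-1}$ --- there is no shortcut here. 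Finally, for part (4) in the case $r_{2,k-1}=0$, $\vec d=(m,1,\dots,1,m)$ with $k$ even, your claim that ``the $(1,k)$-corner of every element of $\h$ has rank at most one'' is false: although each individual product through the size-one bottleneck has rank $\le 1$, sums of such products need not. Concretely, for $\vec d=(2,1,1,2)$ the element $[[E,[D,E]],E]\in\n$ has $(1,4)$-corner equal to $\begin{pmatrix} -1&0\\ 0&1\end{pmatrix}$, of rank $2$ (and for $k$ odd the same shape is odd-symmetric and genuinely has $r_{1,k}=2$, so any rank bound of this kind must use the parity of $k$). What is actually needed, and what the paper does, is the opposite: one must \emph{construct} an element with corner of rank exactly $1$ --- via $X=(\ad D)^{d_1-1}E$, then $Y=(\ad D)^{d_k-2}(\ad E)^{k-2}X$ and $[D,Y]$, where the evenness of $k$ enters the sign pattern --- thereby showing $r_{1,k}=1$ and contradicting $r_{1,k}=2$. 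So while your architecture is the right one, the three residual configurations, which are the entire content of the proposition, remain unproved under your plan.
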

\begin{proof}
 We use induction on $k$.  For $k= 2$ there is nothing to prove.
 We assume $k\ge3$ and that the whole proposition is true for lower values of $k$.

\medskip
 \noindent
 Proof of part \eqref{sym3}, we have $r_{2,k-1}=2$ and $d_1=d_k=1$:
 By induction hypothesis on part \eqref{vuelta2},
 $r_{2,k-1}=2$ implies that $k-2$ is odd
 and $\vec{d}$ is odd-symmetric.
Proposition \ref{ida} and $d_1=d_k=1$ imply  $r_{1,k}=0$.

\medskip
 \noindent
 Proof of part \eqref{sym4}, we have
 $r_{1,k}=0$: As in   the proof of Proposition \ref{sym2},
 we will consider all possible values for $r_{1,k-1},r_{2,k}$.

The cases  $r_{1,k-1}=0$,  $r_{2,k}\ne 0$ and
$r_{1,k-1}\ne 0$, $r_{2,k}=0$ are impossible by  Proposition \ref{prop.F1}.

The case $r_{1,k-1}=r_{2,k}=0$ follows by induction hypothesis
on part \eqref{sym4}.

The case  $r_{1,k-1}=r_{2,k}=1$ is as in the proof of
Proposition \ref{sym2} in which we obtained $d_1=d_k=1$.

The case $r_{1,k-1}=r_{2,k}=2$ implies $d_1, d_k> 1$, which contradicts
Proposition \ref{sym2}.

Finally, let us prove that the case $r_{1,k-1}=2$, $r_{2,k}=1$ is impossible
(the symmetric case $r_{1,k-1}=1$, $r_{2,k}=2$ will then also be impossible).

This case
implies that $d_{k-1},d_1\ge 2$ and thus, by Proposition \ref{sym2}, $d_k=1$.
 Proposition \ref{prop.F1} (c2) implies that $d_1= 2$.
 Proposition \ref{adj} implies $r_{2,k-1}\ne 2$,
 Proposition \ref{prop.F1} implies $r_{2,k-1}\ne 0$, and thus  $r_{2,k-1}=1$.
 Since  $d_{k-1}\ge 2$, if  $d_2>1$, Proposition \ref{useful} (a) would imply
 that $r_{1,k}=1$; hence $d_2=1$.
 Let $ l\ge3$ be the first index such that $d_{l-1}=1$ but $d_{l}>1$.
 Thus we have
 \[
 2=d_1,\; 1=d_2=\dots =d_{l-1},\; 2\le d_l,\;\dots,\; 2\le d_{k-1},\;1=d_k.
 \]
  Since $d_{k-1}\geq 2$, the $k$th block column of $E^2$ is zero. It follows that the $k$th block columns of $E^2,E^3,\dots$ are zero.
	Likewise, since $d_1=2$, $d_2=\dots=d_{l-1}=1$ and $d_l\geq 2$, the first block columns of $E^l,E^{l+1},\dots$ are zero. We will use these
comments to compute powers of $\ad(E)=L_E-R_E$, where $L_E(Y)=EY$ and $R_E(Y)=YE$ for all $Y\in M_d$, by means of the binomial expansion.

Now we will show that $r_{l,k-1}\ne 0,1,2$, which is a contradiction.

  Since $d_l,d_{k-1}\ge 2$,  Proposition \ref{sym2} implies $r_{l,k-1}\ne 0$.
    We next show that $r_{l,k-1}\ne 1$. If not, there is
  $X\in\h$ such that $\rk(p_{l,k-1}(X))=1$ and by Proposition \ref{prop.dominante}
  we may assume as in \eqref{eq.dominante1}. The above comments on powers of $E$ give
	$$
	p_{1,k}(\ad(E)^{l}(X))=(-l)p_{1,l}(E^{l-1})p_{l,k-1}(X)p_{k-1,k}(E)=\begin{pmatrix} 0\\ -l\end{pmatrix},
	$$
which contradicts $r_{1,k}=0$.

Finally, let us show that $r_{l,k-1}\ne 2$. Suppose, if possible, that $r_{l,k-1}=2$. Since $l\geq 3$, the induction hypothesis  on part \eqref{sym3}
applies to give $r_{l-1,k}=0$.

Since $d_k = 1$ we have $r_{l,k}\in\{0,1\}$.
Similarly, $d_{l-1} = 1$ implies $r_{l-1,k-1}\in\{0,1\}$.
By the induction hypothesis on part \eqref{sym4},
 $d_{k-1} > 1$ implies  $r_{l-1,k-1}\ne 0$ and
 $d_{l} > 1$ implies  $r_{l,k}\ne 0$.
  Therefore $r_{l,k}=r_{l-1,k-1}=1$ and thus
 there is $X\in\h$ such that such that $\rk(p_{l,k}(X))=1$, but since
  $r_{l-1,k}=0$, Proposition \ref{prop.F1} implies that
  $\rk(p_{l-1,k-1}(X))=1$.
  By Proposition \ref{prop.dominante} we may assume that
  $p_{l,k}(X)$ is an in \eqref{eq.dominante1} and that, up to a scalar multiple,
   $p_{l-1,k-1}(X)$ is also
  as in \eqref{eq.dominante1}. But given that $r_{l-1,k}=0$, we must have $p_{l-1,k}([E,X])=0$, which implies that
  $p_{l-1,k-1}(X)$ is exactly as in \eqref{eq.dominante1}. On the other hand, the above comments on powers of $E$ give
  	$$
	p_{1,k}(\ad(E)^{l-1}(X))=p_{1,l}(E^{l-1})p_{l,k}(X)-(l-1)p_{1,l-1}(E^{l-2})p_{l-1,k-1}(X)p_{k-1,k}(E).
	$$
  Therefore, our descriptions of $p_{l,k}(X)$ and $p_{l-1,k-1}(X)$ yield
   $$
	p_{1,k}(\ad(E)^{l-1}(X))= \begin{pmatrix} 0\\ 1\end{pmatrix}-(l-1)\begin{pmatrix} 0\\ 1\end{pmatrix}=\begin{pmatrix} 0\\ 2-l\end{pmatrix}\neq \begin{pmatrix} 0\\ 0\end{pmatrix},
$$
which contradicts $r_{1,k}=0$.

\medskip
 \noindent
 Proof of part \eqref{cru}, we have  $r_{2,k-1}=1$:
 If $d_1\ne d_k$,
 it follows from Proposition \ref{sym1}
 and part \eqref{sym4}
 that $r_{1,k}=1$.
 Therefore, we assume from now on $d_1=d_k$.
Let us consider now $r_{1,k-1}$ and $r_{2,k}$.

Suppose first $r_{1,k-1}=r_{2,k}=0$. If $k$ is odd the
inductive hypothesis
on part (5) implies $(d_1,...,d_{k-1})=(1,...,1)$ and
$(d_2,...,d_k)=(1,...,1)$.
If $k$ is even the inductive hypothesis on part (5) implies that
$(d_1,...,d_{k-1})$ and $(d_2,...,d_k)$ are odd-symmetric,
$d_1=d_{k-1}=1$ and
$d_2=d_k=1$.
Then, repeatedly using both odd-symmetries, we obtain
$d_{k-2}=d_{k-3}=\dots=d_4=d_3=1$.
Thus $\vec{d}=(1,...,1)$ regardless of
the parity of $k$, whence $r_{i,j}=0$
whenever $j-i>1$.
Since $r_{2,k-1}=1$, we infer $k=4$.

  The cases  $r_{1,k-1}=0$,  $r_{2,k}\ne 0$ and
$r_{1,k-1}\ne 0$, $r_{2,k}=0$ imply that $r_{1,k}=1$ by  Proposition \ref{prop.F1}.

  The case  $r_{1,k-1}=2$ implies that $r_{1,k}$ cannot be 2 by
  Proposition \ref{adj} and that  $r_{1,k}$ cannot be 0
  by part  \eqref{sym4}, and thus $r_{1,k}=1$.
  Similarly, if $r_{2,k}=2$ then $r_{1,k}=1$.

 Therefore, we can assume $r_{1,k-1}=r_{2,k}=1$ and thus $(d_1,\dots,d_k)\ne(1,\dots,1)$.
  There are 3 cases to consider:

{\sc Case 1.} There are $1<i<j<k$ such that $d_i,d_j>1$ and
  \begin{equation}\label{eq.d_equal_1}
 \text{$d_l=1$ for $j<l\le k$ and $1\le l<i$}.
  \end{equation}
  We have $r_{i,j}\neq0$ by Proposition \ref{sym2}.

  Assume first $r_{i,j}=1$ Then there is $X\in\h$
  such that $\rk(p_{i,j}(X))=1$ and, by
  Proposition \ref{prop.dominante}, we may assume $p_{i,j}(X)$
  as in \eqref{eq.dominante1}.
    Since  $d_i,d_j>1$, for 
all $q,\beta$ satisfying $1\leq q<\beta\leq k+i-j-1$ and $q<i$ we have
    \[
    p_{i-q,j+\beta-q}((\text{ad}\, E)^{\beta} X)
       =(-1)^{\beta-q}\binom{\beta}{q}.
    \]
Let $q=i-1$ and $\beta=k+i-j-1$. Then
 $p_{1,k}(\text{ad}(E)^\beta (X))\neq 0$,
 whence $r_{1,k}=1$.

  Now assume $r_{i,j}=2$.
  By the induction hypothesis on  part \eqref{vuelta2} we have
  $j+1-i$ odd and 
  \begin{equation}\label{eq.odd_sym}
  (d_{i},d_{i+1},\dots,d_{j})\quad\text{is odd-symmetric}.
  \end{equation}
The induction hypothesis on part \eqref{sym3} implies
$r_{i-1,j+1}=0$.
We claim that $r_{i-1,j}=r_{i,j+1}=1$.
Indeed, the induction hypothesis on part \eqref{sym4} implies that neither
$r_{i-1,j}$ nor $r_{i,j+1}$ is equal to 0,
and, since $d_{i-1}=d_{j+1}=1$, we have that neither
$r_{i-1,j}$ nor $r_{i,j+1}$ is equal to 2.
Summarizing, we have
\[
 r_{i,j}=2,\qquad r_{i-1,j}=r_{i,j+1}=1,\qquad r_{i-1,j+1}=0.
\]

 Thus there is
  $X\in \h$
  such that $\rk(p_{i-1,j}(X))=1$, but since
  $r_{i-1,j+1}=0$, Proposition \ref{prop.F1} implies that
  $\rk(p_{i,j+1}(X))=1$ and
  $p_{i-1,j+1}([X,E]))=0$.
    By Proposition \ref{prop.dominante} we may assume that
    \[
    p_{i-1,j}(X)=\begin{pmatrix}
     0& \cdots &0&1
    \end{pmatrix}\qquad
    \text{ and }\qquad
    p_{i,j+1}(X)=\begin{pmatrix}
     x\\ 0\\ \vdots \\ 0
    \end{pmatrix}.
    \]
 But since  $p_{i-1,j+1}([X,E]))=0$, we must have $x=1$.
    This implies that for all $q,\beta$ satisfying $1\leq q\leq \beta\leq k+i-j-2$ and $q<i$ we have
    \[
    p_{i-q,j+1+\beta-q}((\text{ad}\, E)^{\beta} X)
    =\pm\left(\binom{\beta}{q}-\binom{\beta}{q-1}\right).
    \]
     Let $q=i-1$ and $\beta=k+i-j-2$. Then
  \[
    p_{1,k}((\text{ad}\, E)^{k-j+i-2} X)
    =\pm\left(
\binom{k-j+i-2}{i-1}-\binom{k-j+i-2}{i-2}
\right).
    \]
If this number is not zero,
 then $\rk\big(p_{1,k}((\text{ad}\, E)^{k-j+i-2} X)\big)=1$
 and thus $r_{1,k}=1$.
 Otherwise
    \[
\binom{k-j+i-2}{i-1}=\binom{k-j+i-2}{i-2}
    \]
and hence $j=k+1-i$.
This, together with \eqref{eq.d_equal_1}
and \eqref{eq.odd_sym}, imply $k$ odd and
 \[
  (d_{2},d_{3},\dots,d_{k-1})\quad\text{is odd-symmetric}.
  \]
Now, Proposition \ref{ida} implies $r_{2,k-1}=0$, a contradiction.

 {\sc Case 2.} There is $1<i<k$ such that $d_i>1$ and $d_l=1$ for all $l\neq i$.

Set
$X=(\text{ad}\, D)^{d_i-1} E\in \h$, so that
    \[ p_{i-1,i}(X)=(0,\dots,0,(-1)^{d_i-1}),\;
    p_{i,i+1}(X)=\begin{pmatrix}
     1\\ 0\\ \vdots \\ 0
    \end{pmatrix},
    \]
and $p_{j,j+1}(X)=0$ in all other cases. It follows that
$$
p_{1,k}((\text{ad}\, E)^{k-2} X)=\pm\left(
\binom{k-2}{i-1}-(-1)^{d_i-1}\binom{k-2}{i-2}
\right).
$$
If $d_i$ is even this number is not zero, so $r_{1,k}=1$. Suppose $d_i$ is odd.
Then the above number is zero if and only if $k$ is odd and $i=(k+1)/2$, in which case
$\vec d$ is odd-symmetric. But then Propositions \ref{ida} and \ref{prop.dominante} imply that
$r_{2,k-1}=0$, a contradiction.

\medskip

  {\sc Case 3.} 
$d_1=d_k>1$ and  $d_j>1$ for some $j=2,\dots, k-1$. 
Let us consider the 
Lie algebra 
$\h'=\h(0,0,C)$ associated to   
\begin{equation}
  \vec d{\,}'= (1,d_2,\dots,d_{k-1},1)
  \end{equation}
  and let $\n(C)'$ be the usual 1-dimensional factor of $\h'$.
  
  Let $f:M_{|\vec d{\,}'|\times |\vec d{\,}'|}\to M_{|\vec d|\times |\vec d|}$
  be the symmetric inclusion (the natural inclusion with respect to the block decomposition). 
  Let us call $D'$ and $E'$ the generators of $\h'$ let 
  $\tilde D=f(D')$ and $\tilde E=f(E)$.
  It is clear that 
  \begin{equation}\label{eq.E'D'}
   E=\tilde E\qquad\text{ and }\qquad  D=\tilde D+D_0
\end{equation}
where $D_0\in \h$ consists of two Jordan blocks of size $d_1=d_k$ located
respectively in the upper-left and lower-right corners.

Recall that, by Proposition \ref{prop.struture}, we know that the matrices 
\[
E^{(l)}=\ad(D)^{l}(E)\qquad\text{ and }\qquad {E'}^{(l)}=\ad(D')^{l}(E')
\]
(with $l\ge 0$) generate  
 $\n(C)$ and $\n(C)'$ respectively.

It follows from \eqref{eq.E'D'} that, for each $l\ge 0$,  
\[
 {E}^{(l)}\equiv f({E'}^{(l)}) \mod \mathfrak{q}
\]
where $\mathfrak{q}$ is the ideal of $\h$ formed by the 
upper-right hook of length $|\vec d|-(d_1-1)$ and width $d_1-1$;
that is,
if $Y\in\mathfrak{q}$ and $Y_{ij}\ne0$ then 
\[
i< d_1\text{ and }j\ge d_1, \quad\text{ or }\quad
i\le  |\vec d|-d_1+1\text{ and }j> |\vec d|-d_1+1. 
\]
By hypothesis, $r_{2,k-1}=1$. This implies that  $r'_{2,k-1}=1$
( $r'_{i,j}$ is the $r_{i,j}$ corresponding to $\h'$).
Indeed
 $r_{2,k-1}=1$ implies that there is a matrix $X\in\h$,
 that is a linear combination of brackets of the $E^{(l)}$'s 
 with $\text{rk}\big(p_{2,k-1}(X)\big)=1$.
 The same linear combination of brackets of the ${E'}^{(l)}$'s 
yields a matrix $X'\in\h'$ such that 
\[
 f(X')\equiv X \mod \mathfrak{q}
\]
and hence $\text{rk}\big(p_{2,k-1}(X')\big)=1$.

 Except in the case $k=4$ and $\vec d{}'=(1,1,1,1)$,
we are now in a position to apply Case 1 or Case 2 to $\h'$
to conclude that $r'_{1,k}=1$. 
We now repeat the argument above in the reverse order to conclude that 
 there is a matrix $X\in\h$ 
 with 
 \[
  \big(p_{1,k}(X)\big)_{d_1,1}=1
 \]
This shows that  $r_{1,k}=1$.

In the case $\vec d{}'=(1,1,1,1)$, that is 
$\vec d=(d_1,1,1,d_4)$, it is straightforward to see that
\begin{equation}
 \text{rk}\big( (\ad\,D)^{2d_1-3} ([[[D,E],E],E]\big)=1.
  \end{equation}
We point out that the hypothesis $r_{2,k-1}=1$ does not hold
when $\vec d=(d_1,1,\dots,1,d_k)$ ($d_1=d_k>1$) and $k\ne 4$.

\medskip
 \noindent
 Proof of parts \eqref{vuelta2} and \eqref{vuelta0},
 we have $r_{1,k}\ne1$:
 It follows from part \eqref{cru}
that $r_{2,k-1}\ne1$ (unless $k=4$ and $\vec d=(1,1,1,1)$, in which case (4) and (5) are automatically true).
We now apply the induction hypothesis on parts \eqref{vuelta2} and \eqref{vuelta0}
and we consider the cases $k$ even and $k$ odd.

If $k$ is even, then  $(d_2,\dots,d_{k-1})=(1,\dots,1)$
(in particular $r_{2,k-1}=0$).
We may assume $d_1\ge d_k$, we will show that $d_1= d_k=1$.

 Let
$X=(\text{ad}\, D)^{d_1-1} E\in \h$, so that
    \[
    p_{1,2}(X)=\begin{pmatrix}
     1\\ 0\\ \vdots \\ 0
    \end{pmatrix}.
    \]
 
   If $d_1> d_k$ then
   $p_{\alpha,1+\alpha}(X)=0$ for all $2\le\alpha\le k-1$
   and it is clear that  $\rk(\text{(ad}\, E)^{k-2} X)=1$ and hence $r_{1,k}=1$,
   a contradiction. Therefore $d_1= d_k$.

If $d_1= d_k>1$ then
$p_{\alpha,\alpha+1}(X)=0$ for all $2\le\alpha\le k-2$ and
 \[
    p_{k-1,k}(X)=\begin{pmatrix}
     0& \cdots &0&(-1)^{d_k-1}
    \end{pmatrix}.
\]
Set $Y=(\text{ad}\, D)^{d_k-2} (\text{ad}\, E)^{k-2} X$. Since $k$ is even, we have
\[
p_{1,k}(Y)=
\begin{pmatrix}
0 & \dots & (-1)^{d_k-2} & 0 \\
0 & \dots & 0 & (-1)^{d_k-1} \\
 \vdots & \vdots & \vdots & \vdots \\
 0 & \dots & 0 & 0
\end{pmatrix}
\]
and hence
$\rk(p_{1,k}([D,Y]))=1$, a contradiction. Therefore $d_1= d_k=1$.

If $k$ is odd, then the  induction hypothesis on
parts \eqref{vuelta2} and \eqref{vuelta0}
implies that $(d_2,\dots,d_{k-1})$ is odd-symmetric.
If $r_{1,k-1}\ne 1$, the
induction hypothesis on parts \eqref{vuelta2} and \eqref{vuelta0}
implies $(d_1,\dots,d_{k-1})=(1,\dots,1)$ and thus $d_k=1$
(otherwise we would obtain $r_{1,k}=1$).
Hence $r_{1,k-1}=1$ and similarly $r_{2,k}= 1$.
Now $r_{1,k}\ne 1$, part \eqref{sym4} and
Proposition \ref{sym1} imply
$d_1=d_k$ and thus $(d_1,\dots,d_{k})$ is odd-symmetric.
 Finally, if $r_{1,k}=2$ then Proposition \ref{sym1} implies $d_1=d_k>1$, while if $r_{1,k}=0$ then $d_1=d_k=1$ by part (2).
\end{proof}

Summarizing, we have proved the following theorem.

\begin{theorem}\label{thm.main} Let $k\ge 2$ and $\vec{d}=(d_1,\dots,d_{k})$.
Then the nilpotency degree of $\n(C)$ is $k-1$ except when $r_{1,k}=0$.
This occurs if and only if
 \begin{enumerate}
\item $\vec{d}=(1,\dots,1)$, in which case $\n(C)$ is 1-dimensional abelian.
\item $k$ is odd, $\vec{d}$ is odd-symmetric  with $d_1=d_{k}=1$,
in which case, if $\vec{d}\ne (1,\dots,1)$, the nilpotency degree is $k-2$.
\end{enumerate}
In addition, $r_{1,k}=2$ if and only if $k$ is odd, $\vec{d}$ is odd-symmetric  with $d_1=d_{k}>1$.
 \end{theorem}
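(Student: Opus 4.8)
The plan is to derive Theorem \ref{thm.main} directly from the structural dictionary provided by Proposition \ref{crucial}, since that proposition already encodes all the case analysis about when the corner rank $r_{1,k}$ vanishes or equals $2$. The key observation to set up first is that the nilpotency degree of $\n(C)$ is governed entirely by whether the top-right corner block $p_{1,k}$ can be reached by iterated brackets: since $\n(C)$ is graded by block-degree (Proposition \ref{prop.struture}(3)) and generated by elements of block-degree $1$, a nonzero bracket of $k-1$ generators lands in $\bar{\mathcal D}_{k-1}$, whose only surviving block is exactly $(1,k)$. Thus I would argue that $\n(C)^{k-1}\ne 0$ if and only if $p_{1,k}(X)\ne 0$ for some $X\in\h$, i.e.\ if and only if $r_{1,k}\ne 0$. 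This reduces the whole theorem to reading off the $r_{1,k}=0$ and $r_{1,k}=2$ cases from Proposition \ref{crucial}.

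First I would dispose of the generic statement. If $r_{1,k}\ne 0$, then by the corner argument above there is a length-$(k-1)$ bracket of generators that is nonzero, so the nilpotency degree is at least $k-1$; combined with the fact that $\n(C)\subseteq\bar{\mathcal D}_1\oplus\cdots\oplus\bar{\mathcal D}_{k-1}$ forces all brackets of length $\ge k$ to vanish (any such bracket has block-degree $\ge k$, but $\bar{\mathcal D}_t=0$ for $t\ge k$), the degree is exactly $k-1$. So the only way to drop below $k-1$ is $r_{1,k}=0$.

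Next I would invoke Proposition \ref{crucial}(\ref{vuelta0}) to characterize $r_{1,k}=0$: it holds precisely when either $k$ is even and $\vec d=(1,\dots,1)$, or $k$ is odd and $\vec d$ is odd-symmetric with $d_1=d_k=1$. For case (1) of the theorem, $\vec d=(1,\dots,1)$ makes $\h$ itself the single Jordan block $D+E$, so $\n(C)$ is spanned by the $\ad(D)^l E$, which are pairwise commuting (all lie in a single $\mathfrak{sl}(2)$-string inside the abelian algebra of strictly-upper-triangular-constant-diagonal matrices), hence $\n(C)$ is $1$-dimensional abelian. For case (2), when $\vec d$ is odd-symmetric with $d_1=d_k=1$ and $\vec d\ne(1,\dots,1)$, I need the degree to be exactly $k-2$; here $r_{1,k}=0$ kills length-$(k-1)$ brackets, and I would establish that $r_{i,j}\ne 0$ for the relevant block of colength $k-2$ (using Proposition \ref{crucial}(\ref{sym4}), which says $r_{1,k-1}=0$ forces $d_1=d_{k-1}=1$, a constraint that fails here so $r_{1,k-1}\ne0$, and symmetrically $r_{2,k}\ne0$) to exhibit a nonzero length-$(k-2)$ bracket landing in a block of block-degree $k-2$. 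Finally the last assertion, that $r_{1,k}=2$ iff $\vec d$ is odd-symmetric with $d_1=d_k>1$, is exactly Proposition \ref{crucial}(\ref{vuelta2}) in one direction and follows from Proposition \ref{sym1} together with (\ref{vuelta2}) in the other.

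The main obstacle will be the bookkeeping in case (2) to pin the degree at \emph{exactly} $k-2$ rather than merely at most $k-2$: I must produce an explicit nonzero bracket of length $k-2$. The cleanest route is to transfer to the reduced sequence $\vec d{\,}'=(d_2,\dots,d_{k-1})$, whose length is $k-2$ and which is odd-symmetric with genuinely nontrivial entries, and apply the inductive content of Proposition \ref{crucial} to see that its corner rank $r'_{1,k-2}$ is nonzero (indeed $=2$, by (\ref{vuelta2}), as soon as $d_{(k+1)/2}>1$); lifting a witnessing bracket back up through the block inclusion, as in Case 3 of the proof of Proposition \ref{crucial}, then yields the desired nonzero length-$(k-2)$ element of $\n(C)$. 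Everything else is a direct translation of Proposition \ref{crucial} into the language of nilpotency degree.
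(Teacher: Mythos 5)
Your overall architecture is sound and matches the paper's: reduce the nilpotency degree to the corner rank $r_{1,k}$ via the block-grading, and then read the answer off Proposition \ref{crucial}. The genuine gap is in case (2), where you must pin the degree at \emph{exactly} $k-2$, and both of your proposed arguments fail; test them on $\vec d=(1,1,3,1,1)$. Your first route invokes part \eqref{sym4}: applied to $(d_1,\dots,d_{k-1})$ it says $r_{1,k-1}=0$ would force $d_1=d_{k-1}=1$ --- but in this example $d_1=d_4=1$ \emph{does} hold, so no contradiction arises and $r_{1,k-1}\neq 0$ is not established. Your ``cleanest route'' through the reduced sequence $\vec d{\,}'=(d_2,\dots,d_{k-1})$ fails twice over. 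First, its corner rank can vanish: here $\vec d{\,}'=(1,3,1)$ is itself odd-symmetric with end entries $1$, so its corner rank is $0$ by Proposition \ref{ida}; your parenthetical criterion ``$=2$ as soon as $d_{(k+1)/2}>1$'' misreads \eqref{vuelta2}, which requires the \emph{end} entries $d_2=d_{k-1}>1$, not the middle one. Second, there is an off-by-one: the corner block of a length-$(k-2)$ sequence sits at block $(2,k-1)$ of the full algebra, which has block-degree $k-3$, so even a successful lift would only produce a nonzero bracket of length $k-3$, one short of the length-$(k-2)$ bracket you need (you would still have to bracket once more with $E$ and prove non-vanishing, which is exactly the content of the machinery in Propositions \ref{prop.F1} and \ref{useful}). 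The paper avoids all of this by applying part \eqref{vuelta0} to the length-$(k-1)$ sequence $(d_1,\dots,d_{k-1})$: since $k-1$ is \emph{even}, $r_{1,k-1}=0$ would force $(d_1,\dots,d_{k-1})=(1,\dots,1)$, hence $\vec d=(1,\dots,1)$ because $d_k=1$, contradicting the hypothesis; therefore $r_{1,k-1}\neq 0$, which supplies the required nonzero element of block-degree $k-2$.

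There is also a secondary gap in your proof of the final assertion. For the direction ``$k$ odd, $\vec d$ odd-symmetric with $d_1=d_k>1$ implies $r_{1,k}=2$'' you cite Proposition \ref{sym1} together with \eqref{vuelta2}, but both of these implications point the wrong way (from $r_{1,k}=2$ to conditions on $\vec d$); neither rules out $r_{1,k}=1$ under the stated hypotheses. The paper's argument needs more: $r_{1,k}\in\{0,1,2\}$ by Proposition \ref{prop.r012}; $r_{1,k}=0$ is excluded by part \eqref{sym4} since $d_1>1$; and $r_{1,k}=1$ is excluded by combining Proposition \ref{prop.dominante} (which would produce $X\in\h$ with $X_{1,d}\neq 0$) with Proposition \ref{ida} (all antidiagonal entries of elements of $\h$ vanish in the odd-symmetric case). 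You should repair both points before the proof can stand.
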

 \begin{proof}
  If $\vec{d}=(1,\dots,1)$, then it is clear that $\n(C)$ is 1-dimensional abelian.

If $k$ is odd and $\vec{d}$ is odd-symmetric  with $d_1=d_{k}=1$,
then Proposition \ref{ida} implies $r_{1,k}=0$ and hence the
nilpotency degree of $\n(C)$ is less than or equal to $k-2$.
If $\vec{d}\ne (1,\dots,1)$, then
Proposition \ref{crucial} part \eqref{vuelta0} applied to the
sequence $(d_1,\dots,d_{k-1})$ implies that $r_{1,k-1}\ne0$ and hence the
nilpotency degree of $\n(C)$ is equal to $k-2$.

Conversely, if $r_{1,k}=0$ then Proposition \ref{crucial} part
 \eqref{vuelta0} ensures that if $\vec d\neq (1,\dots,1)$ then $k$ is odd and $\vec d$ is odd symmetric with $d_1=d_k=1$.

As for the last statement, one implication follows from  Proposition \ref{crucial} part (4), while the reverse implication is a consequence
Proposition \ref{crucial} part (2) together with Propositions \ref{ida}, \ref{prop.r012} and \ref{prop.dominante}. 
\end{proof}

\begin{cor}
 If $l<k$ and $r_{i,l+i}=0$ for $i=1,\dots,k-l$,
 then $\vec{d}=(1,\dots,1)$.
\end{cor}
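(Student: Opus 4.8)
The plan is to reduce everything to Proposition~\ref{crucial}, part~\eqref{vuelta0}, through a locality property of the numbers $r_{i,j}$. Since $\h(0,0,C)=\F D\ltimes\n(C)$ is generated by $D$ together with the brackets of the block upper triangular matrices $E^{(l)}=\ad(D)^lE$, every $X\in\h$ has the feature that its $(i,j)$-block is a linear combination of products of blocks all of whose indices lie in $\{i,i+1,\dots,j\}$. Hence the achievable set $\{p_{i,j}(X):X\in\h\}$ depends only on the truncated sequence $(d_i,\dots,d_j)$ and, comparing generators, coincides with the set of corner blocks of the algebra attached to $(d_i,\dots,d_j)$; that is,
\[
r_{i,j}(\vec d)=r_{1,\,j-i+1}(d_i,\dots,d_j).
\]
I would make this precise once, in the spirit of the inclusion $f$ used in Case~3 of the proof of Proposition~\ref{crucial}, part~\eqref{cru}, and then reuse it freely.

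Granting locality, the hypothesis $r_{i,l+i}=0$ for $i=1,\dots,k-l$ says exactly that every window $W_i=(d_i,\dots,d_{i+l})$, a sequence of length $l+1$, satisfies $r_{1,l+1}(W_i)=0$. Proposition~\ref{crucial}, part~\eqref{vuelta0}, applied to each $W_i$ with $k$ replaced by $l+1$, then splits into two cases according to the parity of $l$. If $l$ is odd, $l+1$ is even and the proposition forces $W_i=(1,\dots,1)$; since $W_1,\dots,W_{k-l}$ overlap consecutively and cover all of $\{1,\dots,k\}$, we get $\vec d=(1,\dots,1)$ at once. If $l$ is even, each $W_i$ is only constrained to be odd-symmetric with $d_i=d_{i+l}=1$, which does not by itself determine the inner entries.

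The even case is the main obstacle, and I would handle it by combining the symmetries of overlapping windows. Each $W_i$ is symmetric about its center $i+\tfrac l2$, i.e. $d_a=d_{2i+l-a}$ for $i\le a\le i+l$. Composing the reflection attached to $W_i$ with the one attached to $W_{i+1}$ is a translation by $2$, and a check of the admissible ranges shows that $d_a=d_{a+2}$ for every $1\le a\le k-2$; thus $d$ is constant on each parity class of indices. The endpoint relations already give $d_a=1$ for $a\le k-l$ and for $a\ge l+1$, so $a=1$ and $a=2$ each produce a known value $1$ in one parity class, forcing $\vec d=(1,\dots,1)$.

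The one place where care is required is the boundary $k-l=1$, that is $l=k-1$: then there is a single window $W_1=\vec d$, the translation trick is unavailable, and Proposition~\ref{crucial}, part~\eqref{vuelta0}, only yields that $\vec d$ is odd-symmetric with $d_1=d_k=1$. Indeed $\vec d=(1,2,3,2,1)$ has $r_{1,5}=0$ by Proposition~\ref{ida} yet is not $(1,\dots,1)$, so the statement really needs the vanishing diagonal to have at least two entries (equivalently $l\le k-2$); under that hypothesis the argument above goes through, and I expect this to be the intended reading.
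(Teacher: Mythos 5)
Your proposal is correct and follows essentially the same route as the paper: the paper's entire proof of this corollary is the one-line observation that every window $(d_i,\dots,d_{i+l})$ falls under parts~(2) and~(5) of Proposition~\ref{crucial}, leaving to the reader exactly the steps you supply — the locality identity $r_{i,j}(\vec d)=r_{1,j-i+1}(d_i,\dots,d_j)$ (valid because truncation to a set of consecutive blocks is an algebra homomorphism carrying the generators $D,E$ for $\vec d$ onto those for the window, a fact the paper also uses implicitly throughout the induction in Proposition~\ref{crucial}), the parity split on $l$, and the composition-of-reflections argument gluing the overlapping window symmetries in the even case, all of which you carry out correctly. Your boundary caveat is likewise correct and identifies a genuine erratum in the paper's statement: when $l=k-1$ the hypothesis reduces to $r_{1,k}=0$, and, e.g., $\vec d=(1,3,1)$ with $l=2$ (or your $(1,2,3,2,1)$ with $l=4$) satisfies it by Proposition~\ref{ida} while violating the conclusion — and the paper's one-line proof does not cover this case either. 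One small refinement: the failure occurs only for \emph{even} $l=k-1$; if $l=k-1$ is odd, the single window has even length $k$ and part~(5) of Proposition~\ref{crucial} already forces $\vec d=(1,\dots,1)$, so the sharp hypothesis is ``$l\le k-2$ or $l$ odd,'' slightly weaker than the $l\le k-2$ you propose.
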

\begin{proof}
By hypothesis, all the sequences
\[
(d_1,\dots,d_{l+1}),(d_2,\dots,d_{l+2}), \dots ,
(d_{k-l},\dots,d_{k})
\]
fall in  parts
\eqref{sym4} and \eqref{vuelta0}
of Proposition \ref{crucial}.
\end{proof}

\subsection{The Lie algebra \texorpdfstring{$\n(S)$}{n(S)} for an arbitrary sequence \texorpdfstring{$S$}{S}}
\label{subsec.arbitrary_S}

In this subsection, the involution
$\phi$, 
defined in \S\ref{subsec.gradings},  plays a important role. 
Thus, for a matrix $A$, set $A=A_{+}+A_{-}$,  where
$\phi(A_{\pm})=\pm A_{\pm}$. 
Also, if $A\ne0$, let 
\[
\text{ddeg}(A)=\min\{j-i:A_{i,j}\ne0\}.
\]

Let $\vec{d}=(d_1,\dots,d_{k})$, $k\ge 2$,
and let $S=(S(1),\dots,S(k-1))$ be as in  \eqref{eq.Condition_S}. 
By Proposition \ref{prop.struture}, if $\n(C)$ is $(k-1)$-steps nilpotent (maximal possible nilpotency degree) then so it is $\n(S)$.
On the other hand, it might happen that
$\n(S)$ is still is $(k-1)$-steps nilpotent
when the nilpotency degree of $\n(C)$ is less then $k-1$.
According to Theorem \ref{thm.main}, if the
the nilpotency degree of $\n(C)$ is less then $k-1$, then it is
either equal to 1 or equal to $k-2$.
Theorem \ref{thm.main} says that it is equal to 1 if only if  $\vec{d}=(1,\dots,1)$ (in which case $S=C$);
and it is equal to $k-2$ if only if
 $k$ is odd and $\vec{d}$ is odd-symmetric  with $d_1=d_{k}=1$, 
 in which case the nilpotency degree of $\n(S)$ is $k-2$ or $k-1$ by Proposition \ref{prop.struture}.
Theorem \ref{thm.main2} below elucidates this dichotomy. 
We begin with the following technical result.

\begin{lemma}\label{lemma.Yr}
Given $k\ge 3$ odd, let $\vec{d}=(d_1,\dots,d_{k})$ be odd-symmetric,
set $d=|\vec{d}|$,  and let $S=(S(1),\dots,S(k-1))$ be as in  \eqref{eq.Condition_S}.
Let $E=E(S)$ and let us decompose $E$ as $E=E_{+}+E_{-}$,  where
$\phi(E_{\pm})=\pm E_{\pm}$.

If $\text{ddeg}\,(E_-)> r $ (notice that $1\le r< d_1+d_2-1)$, then
there exists 
a $\phi$-invariant $Y\in\n(S)$ with $\text{ddeg}\,(Y)=d-d_1-r$ 
such that 
\[
\begin{array}{rl}
Y_{1,d-d_1-r+1}=1, & \text{if $r\le d_2$;} \\[1mm]
Y_{1+r-d_2,d-d_1-d_2+1}=1, & \text{if $r\ge d_2$.} 
\end{array}
\]
\end{lemma}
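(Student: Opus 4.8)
The plan is to produce $Y$ as a single explicit element of $\n(S)$ and to deduce all three of its required properties — membership, diagonal degree, and $\phi$-invariance — from a degree count rather than from any symmetrization procedure (which is unavailable, since $\phi$ does not preserve $\n(S)$: indeed $\phi(E(S))=E(S)-2E_-$ lies in a different algebra $\n(S^\phi)$).

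First I would record the ambient bookkeeping. Since $\vec d$ is symmetric, $\phi$ preserves block-degree and fixes $D=D(0,0)$, so it commutes with $\ad D$, and it also preserves each $\mathcal D_t$. The target block $(1,k-1)$ has block-degree $k-2$, and the only blocks of that block-degree are $(1,k-1)$ and $(2,k)$, which $\phi$ interchanges; crucially, every entry lying in these two blocks has diagonal degree at most $d-d_1-1$. Writing $E=E_++E_-$ and setting $F^{(l)}=(\ad D)^{l}E_+$, $G^{(l)}=(\ad D)^{l}E_-$, we get $E^{(l)}=F^{(l)}+G^{(l)}$, where $F^{(l)}$ is $\phi$-invariant of diagonal degree $1+l$ (its leading entry survives under $\ad D$ by Proposition \ref{prop.F0a}, and $\text{ddeg}(E_-)>r\ge 1$ places every degree-$1$ entry of $E$ into $E_+$), while $G^{(l)}$ is $\phi$-anti-invariant of diagonal degree $>r+l$.

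Next I would construct the candidate. Put $t=d-d_1-r$ and choose a nested bracket of $k-2$ generators $E^{(l_1)},\dots,E^{(l_{k-2})}$ — a linear combination of the $E^{(l)}$ themselves when $k=3$ — with $\sum_i l_i=t-(k-2)$, selected so that the all-$F$ substitution $Y^F$ (replace each $E^{(l_i)}$ by $F^{(l_i)}$) is nonzero, has leading diagonal degree exactly $(k-2)+\sum_i l_i=t$, and has its degree-$t$ leading entry in block $(1,k-1)$ equal to $1$ at the prescribed position. This leading entry is computed from Proposition \ref{prop.F0a} as a signed combination of binomial coefficients produced by transporting the corner entry of the path $1\to 2\to\cdots\to(k-1)$ up and to the right; the split $r\le d_2$ versus $r\ge d_2$ records whether this transport runs along the top row or down the left column of block $(1,k-1)$, and $r<d_1+d_2-1$ keeps the target strictly inside the block.

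Finally I would run the degree count that simultaneously delivers $\phi$-invariance and the exact diagonal degree. Expand $Y$ in $\gl(d)$ as $Y=Y^F+R$, where $R$ gathers all bracket monomials containing at least one factor $G^{(l_j)}$. Since diagonal degree is superadditive under multiplication and bracketing, each such monomial has diagonal degree $>\sum_{i\ne j}(1+l_i)+(r+l_j)=(k-3)+\sum_i l_i+r=d-d_1-1$, a bound that is uniform in $j$ because $G$ replaces $F$ at the same index and hence does not alter $\sum_i l_i$. But $R$ has block-degree $k-2$, so it is supported in $(1,k-1)\cup(2,k)$, where no entry has diagonal degree exceeding $d-d_1-1$; therefore $R=0$ and $Y=Y^F$, which is $\phi$-invariant with $\text{ddeg}(Y)=t=d-d_1-r$ and the prescribed entry equal to $1$. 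The main obstacle is the middle step: one must verify that the symmetric leading term does not cancel — it is precisely this non-cancellation that pins $\text{ddeg}(Y^F)=(k-2)+\sum_i l_i$ and thereby calibrates the degree bound above to land exactly at $d-d_1-1$ — and that the relevant binomial combination at the prescribed entry is nonzero, which is where the two-case analysis and the hypotheses on $\vec d$ and $r$ genuinely enter.
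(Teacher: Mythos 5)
Your outer framework coincides with the paper's own mechanism. The decomposition $E^{(l)}=F^{(l)}+G^{(l)}$ with $F^{(l)}=(\mathrm{ad}\,D)^lE_+$, $G^{(l)}=(\mathrm{ad}\,D)^lE_-$, the observation that block-degree $k-2$ is supported only on blocks $(1,k-1)$ and $(2,k)$ where every entry has diagonal degree at most $d-d_1-1$, and the count showing that any bracket monomial containing a $G$-factor has diagonal degree at least $(k-3)+\sum_i l_i+r+1=d-d_1$ and must therefore vanish, is precisely how the paper kills its error term $M_1$ (via the filtration comparison of Proposition \ref{algraduada}) and obtains $\phi$-invariance for free: the surviving element lies in $\n(S_+)$, on which $\phi$ acts as the identity. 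Your remark that symmetrization is unavailable because $\phi$ does not preserve $\n(S)$ correctly identifies why the argument must run this way. That part of your proposal is sound and matches the paper.

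The genuine gap is the step you yourself flag as ``the main obstacle'': the existence of a bracket word in the $E^{(l_i)}$ whose all-$F$ (equivalently, canonical) evaluation has a non-vanishing leading entry, equal to $1$, at the prescribed position on the diagonal $t=d-d_1-r$. You propose to choose the exponents $l_1,\dots,l_{k-2}$ and verify non-cancellation by a binomial computation along the path $1\to 2\to\cdots\to(k-1)$, but this cannot be done by direct inspection: cancellation of exactly such signed binomial combinations is not a rare accident — it is the reason the odd-symmetric exceptional case exists at all (compare the end of Case 1 in the proof of Proposition \ref{crucial}, where $\binom{k-j+i-2}{i-1}=\binom{k-j+i-2}{i-2}$ produces the degenerate configuration) — and deciding when it does \emph{not} occur for a general $\vec d$ is the content of the entire rank analysis of \S\ref{subsec.canonical}. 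The paper does not attempt your direct computation; instead it applies Theorem \ref{thm.main} to the \emph{truncated} dimension vector $(d_1,\dots,d_{k-2},d_{k-1}+1-r)$ when $r\le d_2$, and $(d_1+d_2-r,d_2,\dots,d_{k-2},1)$ when $r\ge d_2$. Under the lemma's hypotheses these vectors have an even number $k-1$ of blocks, hence are never odd-symmetric, and the constraint $r<d_1+d_2-1$ rules out $(1,\dots,1)$; so the truncated $r_{1,k-1}$ is non-zero, and the truncation converts your interior target position into a \emph{corner} entry of the truncated block structure, where Propositions \ref{prop.F0a} and \ref{prop.dominante} allow normalization to the value $1$ with the exact diagonal degree $d-d_1-r$. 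Without invoking Theorem \ref{thm.main} in this way — or reproving an equivalent non-cancellation statement, which would amount to redoing the long induction of Proposition \ref{crucial} — your middle step remains an assertion rather than a proof.
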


\begin{proof}
Let us denote $S_+$ and $S_-$ the sequences corresponding to 
$E_+$ and $E_-$ respectively. 

We first assume $r\le d_2=d_{k-1}$. In this case, the 
sequence 
\[
(d_1,\dots,d_{k-2},d_{k-1}+1-r)
\]
is not odd-symmetric and hence
Theorem \ref{thm.main} 
ensures the existence of a matrix $X\in\n(C)$ with  
$\text{ddeg}\,(X)=d-d_1-r$ 
such that
$X_{1,d-d_1+1-r}=1$ (recall that $d_1=d_k$).
This $X$ is clearly  $\phi$-invariant. 

Now we proceed as in the proof of Proposition \ref{algraduada}
to obtain a matrix $\tilde X\in\n(S_+)$ with  
$\text{ddeg}\,(\tilde X)=d-d_1-r$ 
such that
$\tilde X_{1,d-d_1+1-r}=1$.
Again, it  is clear that $\tilde X$ is $\phi$-invariant. 

Next we proceed again as in the proof of Proposition \ref{algraduada}
to obtain 
$Y\in\n(S)$ with 
 $\text{ddeg}\,(Y)=d-d_1-r$ 
 such that
\[
\tilde X= Y + M_1
\]
with $M_1=0$ or (by the hypothesis 
$\text{ddeg}\,(E_-)> r $)  
$ \text{ddeg}\,(M_1) > \text{ddeg}\,(X)+ r$. 
But this impĺies that $M_1=0$ since otherwise its  block-degree would be $k-1$, this 
is a contradiction since the  block-degree of 
$\tilde X$ and $Y$ is $k-2$.
Therefore,  $Y$ is $\phi$-invariant and 
$Y_{1,d-d_1+1-r}=1$.

Now assume $r\ge d_2=d_{k-1}$. In this case, the argument is completely analogous, except for the use of the sequence 
\[
(d_1+d_2-r,d_2\dots,d_{k-2},1)
\]
which is, again, not odd-symmetric.
\end{proof}

\begin{prop}\label{eluci}
Given $k\ge 3$ odd, let $\vec{d}=(d_1,\dots,d_{k})$ be odd-symmetric
(not necessarily $d_1=1$),  set $d=|\vec{d}|$,  and let $S=(S(1),\dots,S(k-1))$ be as in  \eqref{eq.Condition_S}.
Assume that $S$ is weakly normalized.
Then  $A_{1,d}=0$ for all $A\in\n(S)$
if and only if $S$ is $\phi$-invariant.
\end{prop}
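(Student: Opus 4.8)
The plan is to prove both directions separately, using Proposition~\ref{ida} for the easy direction and the technical Lemma~\ref{lemma.Yr} together with a Gaussian elimination argument for the hard one. For the ``if'' direction, suppose $S$ is $\phi$-invariant. Then $E(S)$ is $\phi$-invariant, and since $\phi(D(0,0))=D(0,0)$, the generators $E^{(l)}(S)=\ad(D(0,0))^l E(S)$ of $\n(S)$ are all $\phi$-invariant; hence $\phi$ restricts to the identity on $\n(S)$ (exactly as in Proposition~\ref{ida}). Because $\vec d$ is odd-symmetric, $d$ is odd, and the definition \eqref{eq.def_phi} of $\phi$ forces every antidiagonal entry of a $\phi$-invariant matrix to vanish; in particular $A_{1,d}=0$ for all $A\in\n(S)$. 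This part is essentially a direct quotation of Proposition~\ref{ida}.

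The substance lies in the ``only if'' direction: assuming $A_{1,d}=0$ for all $A\in\n(S)$ and $S$ weakly normalized, deduce that $S$ is $\phi$-invariant. First I would decompose $E=E(S)=E_+ + E_-$ into its $\phi$-eigencomponents and argue by contradiction, supposing $E_-\neq 0$. The weak normalization is what makes this tractable: I expect it to linearize the relevant part of the equations (this is precisely the ``surprising linearization'' advertised in the introduction), so that $E_-$ is controlled by its lowest diagonal-degree term. Set $r=\text{ddeg}(E_-)$, which satisfies $1\le r<d_1+d_2-1$ by the structure of $E$ and the weak-normalization conditions. Then Lemma~\ref{lemma.Yr} produces a $\phi$-invariant $Y\in\n(S)$ with $\text{ddeg}(Y)=d-d_1-r$ and a prescribed nonzero entry (either $Y_{1,d-d_1-r+1}=1$ or $Y_{1+r-d_2,d-d_1-d_2+1}=1$, depending on whether $r\le d_2$).

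The key computation is then to bracket $Y$ with $E_-$ (or to apply a suitable power of $\ad E$ that isolates the $E_-$ contribution) and read off a nonzero $(1,d)$ entry. Because $Y$ is $\phi$-invariant and $E_-$ is $\phi$-anti-invariant, the product $YE_-$ (and $E_-Y$) will have a controlled behaviour under $\phi$, and the diagonal-degree bookkeeping of Proposition~\ref{prop.degrees} guarantees that the leading terms land exactly on the antidiagonal entry $(1,d)$: indeed $\text{ddeg}(Y)+\text{ddeg}(E_-)=(d-d_1-r)+r=d-d_1$, and one more step with $E$ pushes this to the top corner. The point is that the lowest-degree piece of $E_-$ cannot be cancelled, so the resulting element of $\n(S)$ has $A_{1,d}\neq 0$, contradicting the hypothesis. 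This forces $E_-=0$, i.e. $S$ is $\phi$-invariant.

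The main obstacle I anticipate is the final cancellation analysis: one must be certain that the contribution of $E_-$ to the $(1,d)$ entry genuinely survives and is not annihilated by the many other terms arising from $E_+$ and from higher-degree parts of $E_-$. This is where the careful choice of the prescribed entry in Lemma~\ref{lemma.Yr} (splitting into the cases $r\le d_2$ and $r\ge d_2$) and the diagonal-degree filtration of $\n(S)$ from Proposition~\ref{prop.struture} must be combined so that, modulo strictly higher diagonal-degree, exactly one nonzero monomial reaches position $(1,d)$. Making this ``Gaussian elimination'' precise — choosing $Y$ so that the linear system on the entries of $E_-$ has a uniquely determined, provably nonzero pivot in the $(1,d)$ slot — is the delicate heart of the argument.
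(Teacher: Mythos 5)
Your ``if'' direction is fine and coincides with the paper's (it is a direct quotation of Proposition \ref{ida}). The ``only if'' direction, however, has a genuine gap. First, your invocation of Lemma \ref{lemma.Yr} contradicts its hypothesis: the lemma produces the $\phi$-invariant $Y$ of diagonal-degree $d-d_1-r$ only under the assumption $\mathrm{ddeg}(E_-)>r$, whereas you set $r=\mathrm{ddeg}(E_-)$ itself. This off-by-one is not cosmetic: the lemma works by transferring an element from $\n(C)$ to $\n(S)$, and this transfer stays $\phi$-invariant only because $E$ is $\phi$-invariant \emph{below} degree $r$; at the critical degree the lemma is simply unavailable. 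The paper resolves this with an inner induction on the diagonal degree $t$ (its ``Argument (*)''): assuming $(E_-)_{i,j}=0$ for all $j-i\le t$, the elements $Y_r$, $r\le t$, are legitimately $\phi$-invariant, and for the first violating entry $(E_-)_{i_0,j_0}$ with $j_0-i_0=t+1$ one computes $\bigl(\ad(D)^{i_0-1}([Y_{j_0-d_1},E])\bigr)_{1,d}=\pm 2\,(Y_{j_0-d_1})_{j_0,d}\,(E_-)_{i_0,j_0}\ne 0$. Here the $\phi$-parity argument (the $E_+$-contribution to the bracket is $\phi$-invariant, and a $\phi$-invariant matrix of odd size $d$ has zero antidiagonal) is precisely what makes ``exactly one monomial survives at $(1,d)$'' rigorous — the cancellation analysis that you correctly identify as the delicate heart is left entirely open in your proposal.

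Second, and more structurally, Lemma \ref{lemma.Yr} only covers $1\le r<d_1+d_2-1$ and prescribes entries of $Y$ adapted to the first block row, so a single application cannot detect components of $E_-$ supported in the middle blocks $(i,i+1)$, $2\le i\le k-2$, whose diagonal degrees may exceed $d_1+d_2-1$; your parenthetical claim $1\le r<d_1+d_2-1$ is unjustified when $r=\mathrm{ddeg}(E_-)$. The paper disposes of these components by an outer induction on odd $k$: from $A_{1,d}=0$ on $\n(S)$ it first derives $A_{d_1,d+1-d_1}=0$ and $A_{d_1+d_2,d+1-d_1-d_2}=0$ for all $A\in\n(S)$ (via $\ad(D)^{2d_1-2}$, respectively double brackets against $\ad(D)^{d_1+d_2-1}(E)$), which lets the inductive hypothesis apply to the inner truncated sequence and yields $\phi$-invariance of $(S(2),\dots,S(k-2))$ before the outer blocks are attacked; the base case $k=3$, $d_1=1$ is the genuinely linearized system $\bigl([E,\ad(D)^{d_2-2j_0}(E)]\bigr)_{1,d}=4a_{2j_0}$, handled one coefficient at a time. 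Without this double induction (on $k$ and on $t$), the plan as written does not close, even though the toolkit you name — Lemma \ref{lemma.Yr}, $\phi$-parity, diagonal-degree bookkeeping — is the right one.
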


\begin{proof} We know, from Proposition \ref{ida} that
$A_{1,d}=0$ for all $A\in\n(S)$
if $S$ is $\phi$-invariant.

Conversely,
set $D=D(0,0)$, $E=E(S)$.
We  decompose $E=E_{+}+E_{-}$ and we want to prove that $E_-=0$.
It is clear that, if $E_-\neq0$, then ${\rm ddeg}(E_-)>{\rm ddeg}(E_+)=1$.

We will prove by induction on $k$ (which will always be odd) that $E_{-}=0$.

\medskip

We first point out that,
since $A_{1,d}=0$ for all $A\in\n(S)$ it follows that 
\begin{equation}\label{eq.llc1}
A_{d_1,d+1-d_1}=0\text{ for all $A\in\n(S)$,}
\end{equation}
and 
\begin{equation}\label{eq.llc2}
A_{d_1+d_2,d+1-d_1-d_2}=0\text{ for all $A\in\n(S)$.}
\end{equation}
Indeed, 
if $A_{d_1,d+1-d_1}\ne0$
then $(\text{ad}(D)^{2d_1-2}(A))_{1,d}\ne0$; and 
if $A_{d_1+d_2,d+1-d_1-d_2}\ne0$
then $[(\text{ad}(D)^{d_1+d_2-1}(E),[(\text{ad}(D)^{d_1+d_2-1}(E),A]])_{1,d}\ne0$;

\

{\sc Case $k=3$}. 
If $d_1=1$, then
\[
E=
\left(
\begin{array}{c|cccc|c}
        &1 & a_2 &\cdots & a_{d_2}   &         \\
	\hline
	   &   &     &       &         & a_{d_2} \\
	  &   &      &        &       & \vdots   \\
	   &   &     &       &         & a_{2} \\
	   &   &     &       &         & 1  \\
	\hline
	      &      &        &       &
	\end{array}
\right)
\]
and we must prove $(E_-)_{1,j}=0$ for $j=2,\dots,d_2+1$, that is
$a_{2j}=0$ for $j=1,\dots, \frac{d_2-1}2$ (recall that $d_2$ is odd).
If we know that $a_{2j}=0$ for $j=1,\dots, j_0-1<\frac{d_2-1}2$ then
\[
\Big([E,\text{ad}(D)^{d_2-2j_0}(E)]\Big)_{1,d}=4a_{2j_0},
\]
and hence we obtain $a_{2j_0}=0$. This completes the case $d_1=1$.

Assume $d_1>1$.
The following argument will be used later again in a very similar way,
therefore we label it as Argument (*).

Since $A_{d_1,d+1-d_1}=0$ for all $A\in\n(S)$ (see \eqref{eq.llc1}), the case $d_1=1$
implies
\begin{equation}\label{eq.induction_d1=1}
(E_-)_{d_1,j}=0,\quad\text{for }j=d_1+1,\dots,d_1+d_2.
\end{equation}
In order to prove that $E_-=0$ we will prove, by induction on $t$
(up to $d_1+d_2-1$), 
that
\begin{equation}\label{eq.to_be_proved}
(E_-)_{i,j}=0,\quad\text{for }i=1,\dots,d_1;\; j=d_1+1,\dots,d_1+d_2;
\text{ with } t=j-i.
\end{equation}
Since we already have $(E_-)_{d_1,d_1+1}=0$,
the case $t=1$ is done.
Now assume that we have proved \eqref{eq.to_be_proved}
for  $j-i\le t$ and let us prove it for $j-i=t+1$.

Since \eqref{eq.to_be_proved} is true for  $j-i\le  t$,
it follows that
\[
\text{ad}(D)^{d_1+d_2-1-r}(E_{-})=0,\text{ for $r=1,\dots,t$},
\]
and hence
\[
Y_r=\text{ad}(D)^{d_1+d_2-1-r}(E)\in\n(S)
\]
is $\phi$-invariant for $r=1,\dots,t$.
Now, if there is a $j$, with $i=j-t-1<d_1$, so that
$(E_-)_{i,j}\ne 0$, let $j_0$ be the first such one and let 
$i_0=j_0-t-1$ be the corresponding row, then
\begin{align*}
\Big(\text{ad}(D)^{i_0-1}([Y_{j_0-d_1},E])\Big)_{1,d} &=
\Big(\text{ad}(D)^{i_0-1}([Y_{j_0-d_1},E_-])\Big)_{1,d} \\
& = - 2(Y_{j_0-d_1})_{j_0,d}(E_-)_{i_0,j_0},
\end{align*}
a contradiction (the first equality is a consequence of 
the odd-symmetry of $\vec d$).
Since \eqref{eq.induction_d1=1} takes care of the row $i=d_1$,
this completes the case $k=3$.

\medskip

{\sc Case $k\ge 5$}. Let $k\ge 5$ be an arbitrary odd integer and assume the result
proved for $k'$ odd and $k'\le k-2$.

First the case $d_1=1$. 
The induction hypothesis and \eqref{eq.llc2} 
imply that
\[
 (S(2)',S(3),\dots,S(k-3),S(k-2)')
\]
is $\phi$-invariant, here $S(2)'$ is the last row of $S(2)$ and 
$S(k-2)'$ is the first column of $S(k-2)$.
Therefore, if $d_2=1$ we are done. 
Otherwise, assume $d_2=d_{k-1}>1$. 

Now we proceed as in Argument (*), where the rol played by 
$d_1$ is now played by $d_2$, as follows.
We know that 
\[
(E_-)_{1+d_2,j}=0,\quad\text{for }j=1+d_2+1,\dots,1+d_2+d_3.
\]
In order to prove that $E_-=0$ we will show, by induction on $t$
(up to $d_2+d_3-1$), 
that
\begin{equation}\label{eq.to_be_proved11}
(E_-)_{i,j}=0,\quad\text{for }i=2,\dots,1+d_2;\; j=d_2+2,\dots,d_2+d_3+1;
\text{ with } t=j-i.
\end{equation}
Since we already have $(E_-)_{1+d_2,d_2+2}=0$,
the case $t=1$ is done.
Now assume that we have proved \eqref{eq.to_be_proved11}
for  $j-i\le t$ and let us prove it for $j-i=t+1$.

Since \eqref{eq.to_be_proved11} is true for  $j-i\le  t$,
it follows that 
\[
\text{ad}(D)^{d_2+d_3-1-r}(E_{-})=0,\text{ for $r=1,\dots,t$},
\]
and hence
\[
Y_r=\text{ad}(D)^{d_2+d_3-1-r}(E)\in\n(S)
\]
is $\phi$-invariant for $r=1,\dots,t$.
Now, if there is a $j$, with $i=j-t-1<1+d_2$, so that
$(E_-)_{i,j}\ne 0$, let $j_0\ge d_2+1$ 
be the first such one and let 
$i_0=j_0-t-1$ be the corresponding row, then
\begin{align*}
\Big(\text{ad}(D)^{i_0-2}([Y_{j_0-d_2-1},E])\Big)_{2,d-1} &=
\Big(\text{ad}(D)^{i_0-2}([Y_{j_0-d_2-1},E_-])\Big)_{2,d-1} \\
& =- 2{(Y_{j_0-d_2-1})_{j_0,d-1}}(E_-)_{i_0,j_0}.
\end{align*}

Now, the $\phi$-invariance of $E$ up to degree $t$ and consequently
the $\phi$-invariance of 
\[
\text{ad}(D)^{i_0-2}([Y_{j_0-d_2-1},E])
\]
 up to degree $d-3$  
imply
\[
\frac{1}{c}[E,[E,\text{ad}(D)^{i_0-2}([Y_{j_0-d_2-1},E])]]_{1,d}=
 (Y_{j_0-d_2-1})_{j_0,d-1}(E_-)_{i_0,j_0}\ne 0,
\]
($c$ is a non-zero scalar) a contradiction.

\medskip

Finally we consider the case $d_1>1$.
Again, since  $A_{d_1,d-d_1+1}=0$
for all $A\in\n(S)$, the case $d_1=1$
implies that $(S(2),\dots,S(k-2))$
is $\phi$-invariant and
\[
E_{d_1,j}=(-1)^{j-d_1-1}E_{d+1-j,d+1-d_1}\quad\text{for } j=d_1+1,\dots,d_1+d_2.
\]
Therefore we only need to show that
\[
E_{i,j}=(-1)^{j-i-1}E_{d+1-j,d+1-i}\quad\text{for }i=1,\dots,d_1-1\text{ and }j=d_1+1,\dots,d_1+d_2
\]
which is equivalent to prove that
\begin{equation}\label{eq.to_be_proved1}
(E_-)_{i,j}=0,\quad\text{for }i=1,\dots,d_1-1\text{ and }j=d_1+1,\dots,d_1+d_2.
\end{equation}
Now we use  Argument (*) again. 
We will prove \eqref{eq.to_be_proved1} by induction
 on $t=j-i\le d_1+d_2-1$.
By the weak normalization of $(S(1),\dots, S(k-1))$,
the case $t=1$ is complete.

Let $Y_r$, $r=1,\dots,d_2$, be the $\phi$-invariant matrices provided by 
Lemma \ref{lemma.Yr}.
Now, if there is a $j$, with $i=j-t-1<d_1$, so that
$(E_-)_{i,j}\ne 0$, let $j_0$ be the first such one and let 
$i_0=j_0-t-1$ be the corresponding row, then
\begin{align*}
\Big(\text{ad}(D)^{i_0-1}([Y_{j_0-d_1},E])\Big)_{1,d} &=
\Big(\text{ad}(D)^{i_0-1}([Y_{j_0-d_1},E_-])\Big)_{1,d} \\
& = \pm 2(E_-)_{i_0,j_0}\ne0,
\end{align*}
a contradiction, and this completes the proof.
\end{proof}

We may now state one of the main results of the paper. 

\begin{theorem}\label{thm.main2} Let $k\ge 2$, $\vec{d}=(d_1,\dots,d_{k})$,
$S=(S(1),\dots,S(k-1))$ an arbitrary sequence satisfying \eqref{eq.Condition_S},
and $T=(T(1),\dots,T(k-1))$ the only normalized sequence such that $E(T)$ is $G(\vec d)$-conjugate to $E(S)$,
as ensured by Proposition \ref{prop.normalized_h}. Then the nilpotency degree of $\n(S)$ is $k-1$ except when
 \begin{enumerate}
\item $\vec{d}=(1,\dots,1)$, in which case $\n(S)$ is 1-dimensional abelian.
\item $k$ is odd, $\vec{d}$ is odd-symmetric with $d_1=d_{k}=1$ and $T$ is $\phi$-invariant,
in which case, if $\vec{d}\ne (1,\dots,1)$,
the nilpotency degree of $\n(S)$ is $k-2$.
\end{enumerate}
 \end{theorem}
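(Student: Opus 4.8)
The plan is to assemble Theorem \ref{thm.main} (the canonical case) and Proposition \ref{eluci} (the arbitrary case) after two preliminary reductions. First I would pass to the normalized sequence. By Proposition \ref{prop.struture} the algebra $\n(S)$ is independent of $\alpha$ and $\lambda$, so I set $\alpha=\lambda=0$ and write $D=D(0,0)$. The conjugating matrix $P\in G(\vec d)$ furnished by Proposition \ref{prop.normalized_h} is block diagonal with each block a polynomial in $J^{d_i}(0)$, hence $P$ commutes with $D$; therefore conjugation by $P$ fixes $D$ and sends each generator $E^{(l)}(S)=\ad(D)^l E(S)$ to $E^{(l)}(T)$, so $\n(S)$ and $\n(T)$ are isomorphic with equal nilpotency degree. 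Thus I may assume $S=T$ is normalized throughout.

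Second, I would translate the nilpotency degree into a condition on the corner block. Since $\n(S)$ is generated by the block-degree-one elements $E^{(l)}(S)$ and is graded by block-degree (Proposition \ref{prop.struture}(2)), in a graded Lie algebra generated in degree one the lower central series satisfies $\n(S)^i=\bigoplus_{t\ge i+1}\big(\n(S)\cap\bar{\mathcal{D}}_t\big)$. As the largest possible block-degree is $k-1$, realized only by the $(1,k)$-block, I obtain $\n(S)^{k-1}=0$ and $\n(S)^{k-2}=\n(S)\cap\bar{\mathcal{D}}_{k-1}$. Hence the nilpotency degree of $\n(S)$ is at most $k-1$, and it equals $k-1$ if and only if $p_{1,k}(\n(S))\ne0$.

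Third comes the case division driven by Theorem \ref{thm.main}, using that by Proposition \ref{prop.struture}(5) the degree of $\n(S)$ is at least that of $\n(C)$. When $\n(C)$ already has degree $k-1$, equivalently $r_{1,k}\ne0$, the degree of $\n(S)$ is forced to be $k-1$, and Theorem \ref{thm.main} guarantees that this covers every $\vec d$ outside the two listed exceptions. In the exception $\vec d=(1,\dots,1)$, normalization forces $S=C$, so $\n(S)=\n(C)$ is one-dimensional abelian, giving case (1). In the remaining exception, with $k$ odd, $\vec d$ odd-symmetric, $d_1=d_k=1$ and $\vec d\ne(1,\dots,1)$, Theorem \ref{thm.main} gives that $\n(C)$ has degree $k-2$, so the degree of $\n(S)$ is either $k-2$ or $k-1$; by the previous paragraph it is $k-1$ exactly when some $A\in\n(S)$ has $A_{1,d}\ne0$, the corner block being $1\times 1$.

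Finally I would invoke the deep input. Here $k\ge 3$ is odd, $\vec d$ is odd-symmetric, and $S=T$ is normalized hence weakly normalized, so Proposition \ref{eluci} applies and yields that $A_{1,d}=0$ for all $A\in\n(S)$ if and only if $S$ is $\phi$-invariant. Consequently the degree is $k-2$ precisely when $T$ is $\phi$-invariant and $k-1$ otherwise, which is case (2); the easy direction of the equivalence is already Proposition \ref{ida}. The genuinely hard step is Proposition \ref{eluci} itself, namely the linearization and Gaussian elimination forcing a normalized, corner-degenerate $S$ to be $\phi$-invariant, but granted that result the theorem follows by the bookkeeping above. The only subtlety in the assembly is the verification that the normalizing conjugation preserves both $D$ and the generating set, which rests entirely on the fact that $P$ commutes with $D$.
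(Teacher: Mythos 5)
Your proof is correct and follows essentially the same route as the paper, which likewise assembles Theorem \ref{thm.main}, Proposition \ref{prop.struture}(5), and Proposition \ref{eluci} (with Proposition \ref{ida} for the easy direction). You merely make explicit two points the paper leaves implicit --- that the normalizing matrix $P\in G(\vec d)$ commutes with $D(0,0)$, hence carries $\n(S)$ isomorphically onto $\n(T)$, and that the block-degree grading with generators in degree one reduces ``nilpotency degree $k-1$'' to $p_{1,k}(\n(S))\neq 0$ --- both of which are verified correctly.
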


 \begin{proof}
This is an immediate consequence of the comments made at the beginning of the subsection and the previous proposition.
 \end{proof}

\begin{remark}\label{rml.char_p}
Theorem \ref{thm.main2} fails spectacularly when $\F$ has prime characteristic $p$. Indeed, let $k$ be arbitrary and
take $d_i=p$ for all $i=1,\dots,k$, that is
$\vec d=(p,\dots,p)$.
Let $C=(C(1),\dots,C(k-1))$ be the canonical sequence,
this means that, for all $i=1,\dots,k-1$, we have $C(i)=C_0$ with
\[
C_0=\left(
               \begin{array}{cccc}
                 0 &  0 & \dots & 0 \\
                 \vdots & \vdots  &  & \vdots  \\
                 0 & 0 & \dots & 0\\
                 1 & 0 & \dots  & 0
               \end{array}
             \right)\in M_{p\times p}.
\]
We claim that that $\n(C)$ is abelian (instead of having
nilpotency degree $k-1$).
 Since $(J^p(0))^p=0$, the binomial
 theorem implies $\big(\ad_{\gl(p)}\, J^p(0)\big)^{p}=0$.
 Therefore, $\big(\ad_{\gl(kp)}\, D(0,0)\big)^{p}=0$
 (we notice that in $\text{char}\,\F=0$ we would have had
 $\big(\ad_{\gl(kp)}\, D(0,0)\big)^{2(p-1)}\ne 0$).

 Therefore, the generators of $\n(C)$ are
 (see \eqref{eq.generators})
 \[
 V_r=\big(\ad_{\gl(kp)}\, D(0,0)\big)^r(E(C)),\quad r=0,\dots,p-1.
 \]
It is clear that, for all $i=1,\dots,k-1$, the block
$p_{i,i+1}(V_r)$ is equal to
\[
W_r=\big(\ad_{\gl(p)}\, J^p(0)\big)^r(C_0).
\]
Now, $W_0=C_0$ commutes with
all strictly lower triangular matrices as well as with the diagonal matrix $W_{p-1}$, whose
 only non-zero
 entries are equal to 1 and appear in positions $(1,1)$ and $(p,p)$.
Thus $[W_r,W_0]=0$ for all $r$, whence $[W_r,W_s]=0$ for all $0\le r,s<p$.
It follows that $\n(C)$ is abelian.

Another curious example is $\n(C)$, with
$\vec d=(2,3,2,3,2)$ in $\text{char}\,\F=2$.
Here, a basis of $\n(C)$ is

\[\tiny
\arraycolsep=1.1pt\def\arraystretch{.9}
\left(
\begin{array}{cc|ccc|cc|ccc|cc}\hspace{3mm}
     &  & 0 & 0 & 0 &  &  &   &   & & &  \\
     &  & 1 & 0 & 0 &  &  &   &   & & &  \\
	\hline
     &  &  &  &  & 0 & 0 &   &   & & &  \\
     &  & &  &  & 0 & 0 &   &   & & &  \\
     &  &  &  &  & 1 & 0 &   &   & & &  \\
	\hline
     &  &  &  &  &  &  & 0 & 0  & 0 & &  \\
     &  &  &  &  &  &  & 1 & 0  & 0 & &  \\
	\hline
      &  &  &  &  &  &  &   &   & & 0 &0  \\
     &  & &  &  &  &  &   &   & & 0 & 0 \\
     &  &  &  &  &  &  &   &   & & 1&0  \\
	\hline
     &  &  &  &  &  &  &   &   & & &  \\
     &  &  &  &  &  &  &   &   & & &  \\
	\end{array}
\right)
\left(
\begin{array}{cc|ccc|cc|ccc|cc}\hspace{3mm}
     &  & 1 & 0 & 0 &  &  &   &   & & &  \\
     &  & 0 & 1 & 0 &  &  &   &   & & &  \\
	\hline
     &  &  &  &  & 0 & 0 &   &   & & &  \\
     &  & &  &  & 1 & 0 &   &   & & &  \\
     &  &  &  &  & 0 & 1 &   &   & & &  \\
	\hline
     &  &  &  &  &  &  & 1 & 0  & 0 & &  \\
     &  &  &  &  &  &  & 0 & 1  & 0 & &  \\
	\hline
      &  &  &  &  &  &  &   &   & & 0 & 0  \\
     &  & &  &  &  &  &   &     & & 1 & 0 \\
     &  &  &  &  &  &  &   &    & & 0 & 1  \\
	\hline
     &  &  &  &  &  &  &   &   & & &  \\
     &  &  &  &  &  &  &   &   & & &  \\
	\end{array}
\right)
\left(
\begin{array}{cc|ccc|cc|ccc|cc}\hspace{3mm}
     &  & 0 & 0 & 0 &  &  &   &   & & &  \\
     &  & 0 & 0 & 1 &  &  &   &   & & &  \\
	\hline
     &  &  &  &  & 1 & 0 &   &   & & &  \\
     &  & &  &  & 0 & 0 &   &   & & &  \\
     &  &  &  &  & 0 & 0 &   &   & & &  \\
	\hline
     &  &  &  &  &  &  & 0 & 0  & 0 & &  \\
     &  &  &  &  &  &  & 0 & 0  & 1 & &  \\
	\hline
      &  &  &  &  &  &  &   &   & & 1 & 0  \\
     &  & &  &  &  &  &   &     & & 0 & 0 \\
     &  &  &  &  &  &  &   &    & & 0 & 0  \\
	\hline
     &  &  &  &  &  &  &   &   & & &  \\
     &  &  &  &  &  &  &   &   & & &  \\
	\end{array}
\right)
\left(
\begin{array}{cc|ccc|cc|ccc|cc} \hspace{3mm}
     &  & 0 & 0 & 1 &  &  &   &   & & &  \\
     &  & 0 & 0 & 0 &  &  &   &   & & &  \\
	\hline
     &  &  &  &  & 0 & 1 &   &   & & &  \\
     &  & &  &  & 0 & 0 &   &   & & &  \\
     &  &  &  &  & 0 & 0 &   &   & & &  \\
	\hline
     &  &  &  &  &  &  & 0 & 0  & 1 & &  \\
     &  &  &  &  &  &  & 0 & 0  & 0 & &  \\
	\hline
      &  &  &  &  &  &  &   &   & & 0 & 1  \\
     &  & &  &  &  &  &   &     & & 0 & 0 \\
     &  &  &  &  &  &  &   &    & & 0 & 0  \\
	\hline
     &  &  &  &  &  &  &   &   & & &  \\
     &  &  &  &  &  &  &   &   & & &  \\
	\end{array}
\right)
\]
\[\tiny
\arraycolsep=1.1pt\def\arraystretch{.9}
\left(
\begin{array}{cc|ccc|cc|ccc|cc} \hspace{3mm}
     &  & \hspace{5mm}  &  &  & 1 & 0 &   &   & & &  \\
     &  &  &  &  & 0 & 1 &   &   & & &  \\
	\hline
      &  &  &  &  &  &  & 1 & 0 &0 & &  \\
     &  & &  &  &  &    & 0 & 0 &0 & &  \\
     &  &  &  &  &  &   & 0 & 0 &1 & &  \\
	\hline
     &  &  &  &  &  &  &   &   & & 1 & 0 \\
     &  &  &  &  &  &  &   &   & & 0 & 1  \\
	\hline
      &  &  &  &  &  &  &   &   & & &  \\
     &  & &  &  &  &  &   &   & & &  \\
     &  &  &  &  &  &  &   &   & & &  \\
	\hline
     &  &  &  &  &  &  &   &   & & &  \\
     &  &  &  &  &  &  &   &   & & &  \\
	\end{array}
\right)
\left(
\begin{array}{cc|ccc|cc|ccc|cc} \hspace{3mm}
     &  & \hspace{5mm} &  &  & 0 & 0 &   &   & & &  \\
     &  &  &  &  & 0 & 0 &   &   & & &  \\
	\hline
      &  &  &  &  &  &  & 0 & 1 &0 & &  \\
     &  & &  &  &  &    & 0 & 0 &1 & &  \\
     &  &  &  &  &  &   & 0 & 0 &0 & &  \\
	\hline
     &  &  &  &  &  &  &   &   & & 0 & 0 \\
     &  &  &  &  &  &  &   &   & & 0 & 0  \\
	\hline
      &  &  &  &  &  &  &   &   & & &  \\
     &  & &  &  &  &  &   &   & & &  \\
     &  &  &  &  &  &  &   &   & & &  \\
	\hline
     &  &  &  &  &  &  &   &   & & &  \\
     &  &  &  &  &  &  &   &   & & &  \\
	\end{array}
\right)
\left(
\begin{array}{cc|ccc|cc|ccc|cc} \hspace{3mm}
     &  & \hspace{5mm} &  &  &  & \hspace{3mm} & 0  &  0 & 0& &  \\
     &  &  &  &  &  &  & 0  & 1  & 0& &  \\
	\hline
      &  &  &  &  &  &  &   &   & & 0 &  0 \\
     &  & &  &  &  &  &   &   & & 1 & 0 \\
     &  &  &  &  &  &  &   &   & & 0 & 0 \\
	\hline
     &  &  &  &  &  &  &   &   & & &  \\
     &  &  &  &  &  &  &   &   & & &  \\
	\hline
      &  &  &  &  &  &  &   &   & & &  \\
     &  & &  &  &  &  &   &   & & &  \\
     &  &  &  &  &  &  &   &   & & &  \\
	\hline
     &  &  &  &  &  &  &   &   & & &  \\
     &  &  &  &  &  &  &   &   & & &  \\
	\end{array}
\right)
\left(
\begin{array}{cc|ccc|cc|ccc|cc} \hspace{3mm}
     &  & \hspace{5mm} &  &  &  & \hspace{3mm} & 0  &  1 & 0& &  \\
     &  &  &  &  &  &  & 0  & 0  & 1& &  \\
	\hline
      &  &  &  &  &  &  &   &   & & 1 &  0 \\
     &  & &  &  &  &  &   &   & & 0 & 1 \\
     &  &  &  &  &  &  &   &   & & 0 & 0 \\
	\hline
     &  &  &  &  &  &  &   &   & & &  \\
     &  &  &  &  &  &  &   &   & & &  \\
	\hline
      &  &  &  &  &  &  &   &   & & &  \\
     &  & &  &  &  &  &   &   & & &  \\
     &  &  &  &  &  &  &   &   & & &  \\
	\hline
     &  &  &  &  &  &  &   &   & & &  \\
     &  &  &  &  &  &  &   &   & & &  \\
	\end{array}
\right)
\]
and thus the nilpotency degree of $\n(C)$ is 3.

\begin{ques}\label{ques1}
For which $p\ne0$, $k$ and $1\le\ell<k$, 
there exists
$\vec d=(d_1,\dots,d_k)$ such that the nilpotency degree of $\n(C)$ is $\ell$?
\end{ques}
\end{remark}

\subsection{In which cases is
\texorpdfstring{$\n(S)$}{n(S)} a free
\texorpdfstring{$N$}{N}-step nilpotent Lie algebra?}
Assume that $S$ is weakly normalized.
We already know that if $\vec{d}=(1,\dots,1)$,
then  $\n(S)$ is 1-dimensional abelian.
It is also ablelian if $\vec{d}=(1,\rho,1)$ with $\rho$ odd
and a $\phi$-invariant $S=(S(1),S(2))$,
in this case of dimension $\rho$.
It follows from Theorem \ref{thm.main2} that
these are the only two cases where $\n(S)$ is abelian.
We now determine for which other sequences $\vec{d}=(d_1,\dots,d_{k})$
and  $S=(S(1),\dots,S(k-1))$,
the Lie algebra $\n=\n(S)$ is a free $N$-step nilpotent Lie algebra, $N\ge 2$.

We know (see Proposition \ref{prop.struture}) that
$\n$ is generated by the linearly independent matrices
\[
\big(\text{ad}(D)\big)^{j-1}(E),\quad j=1,\dots,\rho
\]
with $D=D(0,0)$, $E=E(S)$ and  $\rho=\max\{d_i+d_{i+1}-1:1\le i\le k-1\}$.
We assume now $\vec{d}\ne(1,\dots,1)$ and hence $\rho\ge2$.

Witt's formula tell us
that the dimension of the center of the free $N$-step
nilpotent Lie algebra in $\rho$ generators, say ${\mathcal F}_{\rho, N}$, is
\[
\frac{1}{N}\sum_{s\vert N}\mu(s)\rho^{N/s},
\]
where $\mu$ is the M\"obius function, and it is clear that 
\[
\frac{1}{N}\sum_{s\vert N}\mu(s)\rho^{\frac{N}{s}}\ge
\frac{1}{N}\left(\rho^N-\rho\,\frac{\rho^{\frac{N}{q}}-1}{\rho-1}\right)
=
\frac{1}{N(\rho-1)}\left(\rho^{N+1}-\rho^N-\rho^{\frac{N}{q}+1}+\rho\right)
\]
with $q$ the first prime dividing $N$.

We first consider the case when $k$ is odd, $\vec{d}$ is odd-symmetric with $d_1=d_{k}=1$, and $S$ is $\phi$-invariant.
In this case, by Theorem \ref{thm.main2}, we have that $N=k-2$ is odd and
the center of
$\n$ is contained $p_{1,k-1}(\n)\oplus p_{2,k}(\n)$.
Thus we must have
\[
\frac{1}{N(\rho-1)}\left(\rho^{N+1}-\rho^N-\rho^{\frac{N}{3}+1}+\rho\right)
\le 2d_2\le 2\rho
\]
that is
\[
\rho^{N+1}-\rho^N-\rho^{\frac{N}{3}+1}+\rho
\le 2N \rho (\rho-1)
\]
which is possible only if $(\rho,N)=(2,3)$.
This yields $\vec{d}=(1,2,1,2,1)$.
We claim that in this case $\n\simeq {\mathcal F}_{2, 3}$. Indeed,
Let $\mathfrak j$ be the kernel of the obvious epimorphism ${\mathcal F}_{2, 3}\to\n$ and let $\mathfrak z$ be the center of ${\mathcal F}_{2, 3}$.
Suppose, if possible, that $\mathfrak j\neq (0)$. Since ${\mathcal F}_{2, 3}$ is nilpotent, this implies $\mathfrak z\cap \mathfrak j\neq (0)$.
But ${\mathcal F}_{2, 3}^2=\mathfrak z$ and ${\mathcal F}_{2, 3}^2$ maps onto $\n^2$, so $\dim(\n^2)<\dim {\mathcal F}_{2, 3}^2=\dim(\mathfrak z)=2$.
However, we see by direct computation that $\dim(\n^2)=2$. This contradiction shows that $\n\simeq {\mathcal F}_{2, 3}$.

Now we consider the general case, in which $N=k-1$ and the center of
$\n$ is contained $p_{1,k}(\n)$.
The subcase $N=2$, that is $k=3$, has been done in \cite{CGS1} and the result is
that $\n$ is free 2-step nilpotent in $\rho$ generators if and only if
\[
\vec{d}\in \{
(\rho, 1, \rho), \;
(\rho - 1, 2, \rho - 1), \;
(\rho, 1, \rho - 1), \;
(\rho - 1, 1, \rho)\}.
\]
For the subcase $N>2$, we must have
\[
\frac{1}{N(\rho-1)}\left(\rho^{N+1}-\rho^N-\rho^{\frac{N}{2}+1}+\rho\right)
\le d_1d_k\le \rho^2
\]
that is
\[
\rho^{N+1}-\rho^N-\rho^{\frac{N}{2}+1}+\rho
\le N \rho^2 (\rho-1)
\]
which is possible only if $(\rho,N)=(2,3),\;(3,3),\;(2,4)$.

If $(\rho,N)=(2,3)$ then
\[
\vec{d}\in \{
(2,1,1,2),\, (2,1,2,1),\,  (1,2,1,2)
\}.
\]
 In each of these cases we have $\n\simeq {\mathcal F}_{2, 3}$. Indeed, arguing as above, it suffices to show
that $\dim(\n^2)=2$ in all cases.

 We next claim that if  $(\rho,N)=(3,3)$ then $\n\not\simeq {\mathcal F}_{3, 3}$. Indeed, the center of ${\mathcal F}_{3, 3}$ has dimension 8.
Now $8\leq d_1d_4$ forces $\vec{d}=(3,1,1,3)$. However, in this case a direct computation shows that $\dim(\n^2)<8$.

Finally, the case $(\rho,N)=(2,4)$
is possible only if
\[
\vec{d}=(2,1,2,1,2).
\]
In this case we do have $\n\simeq {\mathcal F}_{2, 4}$. Indeed, it suffices to show
that $\dim(\n^3)=3$.

We summarize these results in the following theorem.

\begin{theorem}\label{thm.free} Let $k\ge 2$, $\vec{d}=(d_1,\dots,d_{k})$,
$S=(S(1),\dots,S(k-1))$ an arbitrary sequence satisfying \eqref{eq.Condition_S},
and $T=(T(1),\dots,T(k-1))$ the only normalized sequence such that $E(T)$ is $G(\vec d)$-conjugate to $E(S)$,
as ensured by Proposition \ref{prop.normalized_h}. Then the Lie algebra $\n(S)$ is free $N$-step nilpotent in $\rho$ generators
if and only if
$\vec{d}$ is as follows:
\begin{enumerate}
    \item  $\vec{d}=(1,\dots,1)$, here $(\rho,N)=(1,1)$.
    \item  $\vec{d}=(1,d,1)$  with $T$ $\phi$-invariant, here $(\rho,N)=(d,1)$.
    \item  $\vec{d}\in \{
(d, 1, d), \;
(d - 1, 2, d- 1), \;
(d, 1, d - 1), \;
(d - 1, 1, d)\}$, here $(\rho,N)=(d,2)$.
\item $\vec{d}\in \{(2,1,1,2),\, (2,1,2,1),\,  (1,2,1,2)\}$,
here $(\rho,N)=(2,3)$.
\item  $\vec{d}=(1,2,1,2,1)$ with $T$ $\phi$-invariant, here $(\rho,N)=(2,3)$.
\item  $\vec{d}=(2,1,2,1,2)$, here $(\rho,N)=(2,4)$.
\end{enumerate}
In all cases, $d\ge2$.
\end{theorem}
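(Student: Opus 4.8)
The plan is to convert the question into a dimension count at the top of the lower central series. By Proposition \ref{prop.struture} the Lie algebra $\n=\n(S)$ is generated by the $\rho$ linearly independent matrices $(\ad D)^{j-1}(E)$, $1\le j\le\rho$, which span a complement of $[\n,\n]$, so $\rho=\dim\n/[\n,\n]$ is the minimal number of generators; and by Theorem \ref{thm.main2} the nilpotency degree $N$ of $\n$ equals $k-1$ in general and $k-2$ precisely when $\vec d$ is odd-symmetric with $d_1=d_k=1$ and $T$ is $\phi$-invariant. Hence there is a canonical epimorphism $\mathcal F_{\rho,N}\to\n$ carrying the free generators to these matrices, and $\n$ is free $N$-step nilpotent on $\rho$ generators exactly when this map is an isomorphism. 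The two abelian cases $\vec d=(1,\dots,1)$ and $\vec d=(1,d,1)$ (with $T$ $\phi$-invariant) are disposed of directly and yield items (1) and (2); from now on I assume $N\ge2$.

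The heart of the argument is to trap the dimension of the top (central) piece $\n^{N-1}$ between two bounds. Freeness would force $\dim\n^{N-1}=\dim\mathcal F_{\rho,N}^{N-1}$, which by Witt's formula equals $\frac1N\sum_{s\vert N}\mu(s)\rho^{N/s}$, a quantity the elementary estimate displayed before the theorem shows grows essentially like $\rho^{N+1}/\big(N(\rho-1)\big)$. On the other side, the block structure confines $\n^{N-1}$: in the general case it lies in $p_{1,k}(\n)$, of dimension at most $d_1 d_k\le\rho^2$, while in the odd-symmetric case it lies in $p_{1,k-1}(\n)\oplus p_{2,k}(\n)$, of dimension at most $2d_2\le 2\rho$. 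Comparing the two bounds leaves only finitely many admissible pairs $(\rho,N)$: namely $(2,3),(3,3),(2,4)$ in the general case and $(2,3)$ in the odd-symmetric case. For each surviving pair the same inequality on the central dimension then pins $\vec d$ down to the short explicit lists appearing in items (3)--(6).

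It remains to decide, for each candidate $\vec d$, whether the epimorphism $\mathcal F_{\rho,N}\to\n$ is injective. Here I would use that for $\rho\ge2$ the center of $\mathcal F_{\rho,N}$ is exactly $\mathcal F_{\rho,N}^{N-1}$: if the kernel $\mathfrak j$ were nonzero then, $\mathcal F_{\rho,N}$ being nilpotent, $\mathfrak j$ would meet this center nontrivially, whereas the center surjects onto $\n^{N-1}$, so a matching top dimension $\dim\n^{N-1}=\frac1N\sum_{s\vert N}\mu(s)\rho^{N/s}$ forces $\mathfrak j=0$. Thus freeness reduces to a single numerical identity for $\dim\n^{N-1}$, checked by a direct computation of the relevant terms of the lower central series. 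This verification succeeds in the three $(2,3)$ sequences, in $\vec d=(2,1,2,1,2)$ with $(\rho,N)=(2,4)$, and in the odd-symmetric case $\vec d=(1,2,1,2,1)$, while it fails for $(3,3)$ (where the only candidate $\vec d=(3,1,1,3)$ gives $\dim\n^2<8$); this produces exactly items (3)--(6), the subcase $N=2$, $k=3$ being imported from \cite{CGS1}.

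The main obstacle is precisely the package of explicit dimension computations underpinning the final verification: confirming $\dim\n^2=2$ for each $(2,3)$ sequence, $\dim\n^3=3$ for $\vec d=(2,1,2,1,2)$, and the strict inequality $\dim\n^2<8$ that eliminates $(3,3)$. Each of these amounts to writing out the iterated brackets of the generators $(\ad D)^{j-1}(E)$ for the concrete $\vec d$ in question and reading off the block ranks, and it is exactly the delicate Lie-bracket cancellations emphasized throughout \S\ref{sec.nilp_degree}, rather than any uniform principle, that decide the outcome.
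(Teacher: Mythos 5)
Your proposal is correct and follows essentially the same route as the paper: it isolates the abelian cases, uses the generators $(\ad D)^{j-1}(E)$ from Proposition \ref{prop.struture} and the nilpotency degree from Theorem \ref{thm.main2}, traps $\dim\n^{N-1}$ between the Witt-formula lower bound for $\mathcal F_{\rho,N}$ and the block-structure upper bounds ($d_1d_k\le\rho^2$ in general, $2d_2\le2\rho$ in the odd-symmetric case) to reduce to $(\rho,N)\in\{(2,3),(3,3),(2,4)\}$, and then settles freeness via the kernel-meets-center argument together with the same explicit checks ($\dim\n^2=2$, $\dim\n^3=3$, $\dim\n^2<8$ for $(3,1,1,3)$), importing the $N=2$ case from \cite{CGS1}. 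The only cosmetic difference is that you state uniformly that the center of $\mathcal F_{\rho,N}$ equals its top graded piece, where the paper invokes this ad hoc for each case; the substance is identical.
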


\section{Uniserial representations of the Lie algebras
\texorpdfstring{$\g_{n,\lambda}$}{gn,l} and \texorpdfstring{$\g_{n,\lambda,\ell}$}{gn,l,l}}\label{sec.uniserials}

\subsection{The Lie algebras \texorpdfstring{$\g_{n,\lambda}$}{gn,l} and \texorpdfstring{$\g_{n,\lambda,\ell}$}{gn,l,l}}\label{subsec.def_Lie_alg}

Let $V$ be a vector space of dimension $n\geq 2$ and let
$\L(V)$ be the free Lie algebra associated to $V$
(or the free Lie algebra on $n$ generators).
For $\ell \ge 1$,  let
\[
\N_\ell(V)=\L(V)/\L(V)^\ell
\]
be
the free $\ell$-step nilpotent Lie algebra associated to $V$.

Let $x\in\text{End}(V)$ the linear map acting
on $V$ via a single Jordan block $J_n(\la)$.
In particular $V$ has a basis
$\{v_0,\dots,v_{n-1}\}$
such that
\begin{equation}\label{eq.00}
 ( x - \lambda 1_V)^{k} v_0 =
\begin{cases}
v_k,&\text{if $0 \le k < n$;} \\
0,&\text{if $k=n$.}
\end{cases}
 \end{equation}
 We extend the action of $x$ on $V$ to
 $\L(V)$ so that $x$ becomes a Lie algebra derivation.
 This action preserves $\L(V)^\ell$
and thus $x$ also acts by derivations  on $\N_\ell(V)$.
Let
\[
\g_{n,\lambda}=\langle x\rangle\ltimes \L(V)\quad\text{ and }\quad
\g_{n,\lambda,\ell}=\langle x\rangle\ltimes \N_\ell(V)
\]be the corresponding semidirect products.

\subsection{The uniserial representations \texorpdfstring{$R_{\vec d,\al,S}$}{Rd,a,S}}\label{subsec.R}
Recall that given a vector space $V$ of dimension $n$, $\g_{n,\lambda}=\langle x\rangle\ltimes \L(V)$
and $\g_{n,\lambda,\ell}=\langle x\rangle\ltimes \N_\ell(V)$
(see \S\ref{subsec.def_Lie_alg}).

Given a scalar $\al\in \F$, a sequence of positive integers
$\vec d =(d_1,\dots,d_{\ell+1})$
satisfying
\begin{equation}\label{eq.Condition_on_d_1}
 \max\{d_i+d_{i+1}:1\le i\le \ell\}\le n+1
\end{equation}
and a sequence  $S=(S(1),\dots,S({\ell}))$ as in  \eqref{eq.Condition_S},
we use  \eqref{eq.00}, \eqref{eq.Condition_on_d_1} and
the universal property of $\L(V)$
to define a representation
\[
R_{\vec d,\al,S}:\g_{n,\lambda}\to\gl(d),\quad d=|\vec d|,
\]
by  setting
\begin{align*}
R_{\vec d,\al,S}(x)& =D(\al,\la), \\
R_{\vec d,\al,S}(v_j)&=\big(\ad_{\gl(d)} D(\al,\la)-\la\big)^j (E(S)),\quad 0\leq j\leq n-1.
\end{align*}
We obtain
\begin{equation*}
R_{\vec d,\al,S}(\g_{n,\lambda}) =\h(\alpha, \lambda,S).
\end{equation*}
 It is clear that
$\L(V)^\ell \subset \ker(R_{\vec d,\al,S})$ and hence we
obtain  a representation of the truncated Lie algebra
\[
\bar R_{\vec d,\al,S}:\g_{n,\lambda,\ell}\to\gl(d).
\]
Since, for all $i=1,\dots,d-1$, either
$R(x)_{i,i+1}\ne0$ or
$R(v_0)_{i,i+1}\ne0$,
it follows that
$R_{\vec d,\al,S}$ and
$\bar R_{\vec d,\al,S}$ are
uniserial representations of $\g_{n,\lambda}$ and
$\g_{n,\lambda,\ell}$ respectively.

\begin{definition}
If the sequence $S$ is normalized, we say that
$R_{\vec d,\al,S}$ and $\bar R_{\vec d,\al,S}$
are \emph{normalized}.
\end{definition}

\begin{prop}\label{prop.normal2} Assume $\lambda\ne0$.
The normalized representations
$R_{\vec d,\al,S}$ (resp. $\bar R_{\vec d,\al,S}$) of
$\g_{n,\lambda}$ (resp. $\g_{n,\lambda,\ell}$)
are non-isomorphic to each other.
\end{prop}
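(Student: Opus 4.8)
The plan is to translate the isomorphism into a conjugacy between the defining generators and then invoke the uniqueness of the normalization from Proposition \ref{prop.normalized_h}. First I would observe that $\g_{n,\lambda}$ (and likewise its quotient $\g_{n,\lambda,\ell}$) is generated as a Lie algebra by $x$ and $v_0$: indeed \eqref{eq.00} gives $v_{j+1}=(\ad x-\la)(v_j)$ in $\g_{n,\lambda}$, so every $v_j$ lies in the subalgebra generated by $x$ and $v_0$, and the $v_j$ span $V$, which generates $\L(V)$. Since a linear map intertwining two representations on a generating set intertwines them everywhere, a module isomorphism $R_{\vec d,\al,S}\cong R_{\vec d',\al',S'}$ (the two modules necessarily having equal dimension $d=|\vec d|=|\vec d'|$) amounts to an invertible $P$ intertwining the images of $x$ and $v_0$ alone, that is
\[
P\,D(\al,\la)\,P^{-1}=D(\al',\la)\qquad\text{and}\qquad P\,E(S)\,P^{-1}=E(S'),
\]
where $\la$ is common since both are representations of the same $\g_{n,\lambda}$.

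Next I would exploit the first identity. Because $\la\ne0$, the blocks $J^{d_i}(\al-(i-1)\la)$ of $D(\al,\la)$ carry pairwise distinct eigenvalues, so $D(\al,\la)$ has exactly as many distinct eigenvalues as $\vec d$ has entries, each occurring in a single Jordan block. Conjugate matrices share their Jordan form, so $\vec d$ and $\vec d'$ have the same length; the two eigenvalue sets $\{\al-(i-1)\la\}$ and $\{\al'-(i-1)\la\}$, both arithmetic progressions of common difference $-\la$, then coincide, which (e.g. by equating their sums) forces $\al=\al'$ in characteristic $0$; and finally matching the size of the single Jordan block at each eigenvalue $\al-(i-1)\la$ gives $d_i=d_i'$, hence $\vec d=\vec d'$.

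Finally, with $\al=\al'$ and $\vec d=\vec d'$, the matrix $P$ commutes with $D(\al,\la)$. Since $\la\ne0$ renders the eigenvalues on distinct blocks pairwise different and each block a single Jordan block, the centralizer of $D(\al,\la)$ in $GL(d)$ is precisely the group $G(\vec d)$ of Proposition \ref{prop.normalized_h}: a commuting matrix preserves the eigenspaces, hence is block diagonal, and its $i$-th block, commuting with $J^{d_i}(\al-(i-1)\la)=(\al-(i-1)\la)\,1+J^{d_i}(0)$, is a polynomial in $J^{d_i}(0)$, with nonzero constant term by invertibility. Thus $P\in G(\vec d)$ satisfies $P\,E(S)\,P^{-1}=E(S')$ with $S$ and $S'$ both normalized. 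As $S$ is already normalized, the identity matrix conjugates $E(S)$ to its normalized form; by the uniqueness clause of Proposition \ref{prop.normalized_h} we conclude $P=I$ and $S'=S$, whence $(\vec d,\al,S)=(\vec d',\al',S')$. The argument for $\bar R_{\vec d,\al,S}$ is identical, $\g_{n,\lambda,\ell}$ being again generated by the images of $x$ and $v_0$. I expect the only delicate point to be the centralizer computation together with the correct application of the uniqueness of normalization; the remaining steps are routine consequences of the earlier results.
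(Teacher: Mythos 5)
Your proof is correct and follows essentially the same route as the paper's: comparing eigenvalues and multiplicities of the image of $x$ forces $\al=\al'$ and $\vec d=\vec d'$, commutation with $D(\al,\la)$ places the intertwiner $P$ in $G(\vec d)$, and the uniqueness clause of Proposition \ref{prop.normalized_h} then yields $S=S'$, exactly as in the paper. One small caveat: since conjugation by scalar matrices (which belong to $G(\vec d)$) fixes $E(S)$, your final conclusion $P=I$ is stronger than can literally hold — the uniqueness in Proposition \ref{prop.normalized_h} should be read as uniqueness of the normalized sequence (equivalently, of $P$ up to this trivial ambiguity) — but the conclusion you actually need, $S'=S$, is unaffected.
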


\begin{proof} It is enough to consider the case for the
representations of
$\g_{n,\lambda}$.
Considering the eigenvalues of the image of $x$ as well as their multiplicities, the only possible isomorphisms are easily seen to be between $R_{\vec d,\al,S}$ and $R_{\vec d,\al,S'}$.
Assume that $R_{\vec d,\al,S}$ is isomorphic to $R_{\vec d,\al,S'}$.
Then there is $P\in\GL(|\vec d|)$ satisfying
\begin{equation}\label{eq.intertwining}
P R_{\vec d,\al,S}(y) P^{-1}=R_{\vec d,\al,S'}(y),
\quad\text{for all }y\in\g_{n,\lambda}.
\end{equation}
Considering $y=x$ in \eqref{eq.intertwining} we obtain that
$P$ must commute with $D(\al,\la)$, and hence
$P\in G(\vec d)$ (see Proposition \ref{prop.normalized_h}).
Finally, considering $y=v_0$ in \eqref{eq.intertwining},
it follows from Proposition \ref{prop.normalized_h}
that $S=S'$.
\end{proof}

\subsection{Classification of all uniserial \texorpdfstring{$\g_{n,\lambda}$}{gn,l}-modules}\label{sec:uniserials_free}

For the remainder of the paper we assume that $F$ is an algebraically closed field of characteristic 0.

In this section we classify all uniserial
(finite dimensional) representations of
$\g_{n,\lambda}=\langle x\rangle\ltimes \L(V)$, where $V$ is a vector space
of dimension $n$
over $\F$ 
on which $x$ acts via a single Jordan block $J_n(\la)$.
First we prove a proposition that provides information about
the structure of a uniserial representation of a wider class of Lie algebras.

\begin{prop}\label{prop.ensu} Let $\n$ be a solvable Lie algebra and let $x$ be a derivation of $\n$ such that $[\n,\n]$ has an $x$-invariant complement, say $\p$, in $\n$, and $x$ acts on $\p$ via a single Jordan block $J_n(\la)$, $\la\neq 0$. Let $v_0,\dots,v_{n-1}$ be a basis $\p$ such that
\begin{equation}\label{eq.x_v}
x(v_0)=\la v_0+v_1, x(v_1)=\la v_1+v_2,\dots, x(v_{n-1})=\la v_{n-1}.
\end{equation}
Set $\g=\langle x\rangle\ltimes \n$ and let
$T:\g\to\gl(U)$ be a uniserial representation of dimension $d$ such that $$\ker(T)\cap\p= 0.$$
Then there is a basis $\B$ of $U$, a unique scalar $\al\in\F$,
a unique sequence of positive integers
$\vec d=(d_1,\dots,d_{\ell+1})$, $\ell\ge1$, satisfying $|\vec d| =d$ and
\[
 \max\{d_i+d_{i+1}:1\le i\le \ell\}=n+1
\]
and a unique
normalized sequence $S=(S(1),\dots,S(\ell))$ of matrices
such that the matrix representation $R:\g\to\gl(d)$ associated to $T$ and $\B$ satisfies:
\begin{align}
\label{s1}
R(x)&=J^{d_1}(\alpha)\oplus J^{d_2}(\alpha-\la)\oplus\cdots\oplus J^{d_{\ell+1}}(\alpha-\ell\la), \\
\label{s2}
R(v_0)&=\left(
               \begin{array}{ccccc}
                 0 & S(1) & 0 & \dots & 0 \\
                 0 & 0 & S(2) & \dots & 0\\
                 \vdots & \vdots & \ddots & \ddots & \vdots\\
                 0 & 0 & \dots & \ddots & S(\ell) \\
                 0 & 0 & \dots & \dots & 0 \\
               \end{array}
             \right),
\end{align}
and every $R(y)$, $y\in\n$, is block strictly upper triangular relative to $\vec d$.
Moreover, if $\n^{k-1}$ is not contained in $\ker(T)$, then $\ell\geq k$.
\end{prop}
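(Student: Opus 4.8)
The plan is to read the structure of $R$ off the unique composition series of the uniserial module $U$, and then to normalize. First I would use uniseriality to fix a full flag $0=U_0\subset U_1\subset\cdots\subset U_d=U$ of $\g$-submodules with one-dimensional quotients and pick a basis $u_1,\dots,u_d$ adapted to it. Since $\la\ne 0$, the derivation $x$ maps $\p$ onto $\p$, so $\p\subseteq[\g,\g]$ and in fact $[\g,\g]=\n$; hence every quotient $U_i/U_{i-1}$ is a character killing $\n$, so in the adapted basis $R(\n)$ is strictly upper triangular and $D:=R(x)$ is upper triangular with the composition-factor weights $\mu_1,\dots,\mu_d$ on the diagonal. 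The relations $[x,v_j]=\la v_j+v_{j+1}$ give $R(v_j)=(\ad D-\la)^jR(v_0)$ and $(\ad D-\la)^nR(v_0)=0$, while the fact that $\p$ spans $\n$ modulo $[\n,\n]$ (together with nilpotency of $R(\n)$) shows $R(\n)$ is generated by $R(\p)=\mathrm{span}\{R(v_j)\}$.

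Next I would pass to the generalized eigenspace decomposition $U=\bigoplus_\mu U_{[\mu]}$ of $D$. Writing $E:=R(v_0)$ in block form along this decomposition, $\ad D-\la$ acts on the $(\nu,\mu)$ block as $(\nu-\mu-\la)\,\mathrm{id}+(\text{nilpotent})$, so $(\ad D-\la)^nE=0$ forces $E$ (and likewise every $R(v_j)$, hence all of $R(\n)$) to raise the $D$-eigenvalue by $\la$, i.e. $E(U_{[\mu]})\subseteq U_{[\mu+\la]}$. Consequently the eigenvalues of $D$ split into cosets of $\Z\la$, each spanning a $\g$-submodule; since $U$ is indecomposable there is a single coset, and a missing eigenvalue inside it would again split $U$, so the eigenvalues are exactly $\al,\al-\la,\dots,\al-\ell\la$ with $\al$ the top one and every block nonzero. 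Setting $V_m=U_{[\al-m\la]}$ and $d_{m+1}=\dim V_m$ defines $\vec d$; that $R(v_0)\ne0$ forces $\ell\ge1$. Because $\bigoplus_{j\le m}V_j$ is a submodule and $\n$ acts trivially on the subquotient $V_m$, each $V_m$ is a uniserial $\langle x\rangle$-module, i.e. a single Jordan block; choosing Jordan bases gives \eqref{s1}, and since $E$ only connects $V_m$ to $V_{m-1}$ it has the shape \eqref{s2} with $S(i)=p_{i,i+1}(E)$.

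The delicate point — which I expect to be the main obstacle — is extracting the non-degeneracy of $S$ from uniseriality. For this I would use that $U$ is uniserial iff at every step $t$ some generator has nonzero $(t,t+1)$-entry (otherwise the subquotient $U_{t+1}/U_{t-1}$ splits, producing a second submodule of dimension $t$). Within a block this is automatic from the Jordan $1$'s; at the boundary between blocks $i$ and $i+1$ a direct computation shows $(\ad D-\la)^jE$ has zero $(d_i,1)$-entry in the $(i,i+1)$ block for every $j\ge1$, so only $v_0$ can cover that position and hence $S(i)_{d_i,1}\ne0$, i.e. $S$ satisfies \eqref{eq.Condition_S}. Feeding $S(i)_{d_i,1}\ne0$ into Proposition \ref{prop.F0a} shows the largest $j$ with $(\ad D-\la)^jE\ne0$ equals $\max_i(d_i+d_{i+1})-2$; since $\ker T\cap\p=0$ gives $(\ad D-\la)^{n-1}E=R(v_{n-1})\ne0$ while $(\ad D-\la)^nE=0$, this yields $\max_i(d_i+d_{i+1})=n+1$. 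Finally I would apply Proposition \ref{prop.normalized_h} to conjugate $E(S)$ by the unique $P\in G(\vec d)$ into normalized form (this $P$ preserves $D=D(\al,\la)$, whose centralizer is exactly $G(\vec d)$ because $\la\ne0$), which also gives the asserted uniqueness of $\al$, $\vec d$ and the normalized $S$.

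For the last assertion, since $R(\n)=\n(S)$ is graded by block-degree relative to $\vec d$ (Proposition \ref{prop.struture}), the lower central term $\n(S)^{k-1}$ lies in block-degrees $\ge k$, which vanish once $k>\ell$ (there being only $\ell+1$ blocks). As $T$ is a homomorphism, $T(\n^{k-1})=\n(S)^{k-1}$, so $\n^{k-1}\not\subseteq\ker T$ forces $k\le\ell$, i.e. $\ell\ge k$.
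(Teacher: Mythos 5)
Your proof is correct, and it reaches the paper's intermediate facts (spectrum of $R(x)$ an arithmetic chain of step $\la$, $R(\n)$ strictly block upper triangular, the corner entries $S(i)_{d_i,1}\neq 0$, $\max_i(d_i+d_{i+1})=n+1$, normalization and uniqueness via Proposition \ref{prop.normalized_h}, and the block-degree argument for $\ell\geq k$) by a genuinely different route in its first half. The paper works inside a single triangularizing basis adapted to the composition series: it first conjugates so that $D_{i,j}=0$ whenever $D_{i,i}\neq D_{j,j}$ (citing \cite[Lemma 2.2]{CS}), extracts the eigenvalue chain entrywise from the superdiagonal relation $(D_{i,i}-D_{i+1,i+1}-\la)^k(E_0)_{i,i+1}=(E_k)_{i,i+1}$ (its Step 1), kills the unwanted blocks of the $E_k$ by a generalized-eigenvector argument on subquotients (Steps 3--4), and Jordanizes each diagonal block using the step-covering criterion of \cite[Lemma 2.1]{CS} inside the block (Step 5). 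You instead pass directly to the generalized eigenspace decomposition of $R(x)$, note that $(\ad D-\la)^nE=0$ forces every element of $R(\n)$ to shift eigenvalues by positive multiples of $\la$, and use indecomposability twice (coset splitting and gap splitting --- the gap argument is sound precisely because brackets of shift-by-$\la$ operators cannot jump a missing eigenvalue) to pin the spectrum; and you identify each $V_m$ as a single Jordan block by the cleaner observation that it is a subquotient of a uniserial module on which $\n$ acts trivially. For $\max_i(d_i+d_{i+1})=n+1$ you reuse Proposition \ref{prop.F0a} internally, where the paper cites the Clebsch--Gordan argument of \cite[Proposition 2.2]{CPS}; these are the same $\sl(2)$ computation. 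One point your route obliges you to make explicit, and which you correctly do in passing: the criterion ``uniserial iff every step $(t,t+1)$ is covered'' must be applied with respect to your \emph{new} ordered Jordan basis, which is legitimate because $R(\g)$ is again upper triangular there and a uniserial module cannot have two distinct submodules of the same dimension; with that said, your corner computation (only $v_0$ can produce a nonzero $(d_i,1)$ entry of block $(i,i+1)$, since $(\ad D-\la)$ annihilates that entry) coincides with the paper's Step 1, and your final normalization and $\ell\geq k$ arguments coincide with its Final Step and Step 5. Net effect: your version trades the paper's careful entrywise bookkeeping for a more conceptual, basis-light derivation of the block structure, at no loss of rigor.
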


\begin{proof}
This proof follows the lines of the proof of \cite[Theorem 3.2]{CPS}.
It follows from  Lie's theorem that there is a basis
$\B =\{u_1 ,\dots, u_d \}$ of $U$
such that the corresponding matrix representation
$R:\g\to\gl(d)$ consists of upper triangular matrices.

Set
\[
D = R(x)\text{ and }E_k = R(v_k ),\; 0 \le k \le n - 1.
\]
Conjugating by an upper triangular matrix
(see \cite[Lemma 2.2]{CS} for the details) we may assume
that $D$ satisfies:
\begin{equation}\label{eq.1}
 D_{i,j} = 0\text{ whenever }D_{i,i} \ne D_{j,j}.
\end{equation}
Since $\lambda\ne0$ we have that the action of $x$ on $\p$
is invertible and hence $\p\subset [\g, \g]$.
This implies that
\begin{equation}\label{eq.0}
\begin{split}
E_k\text{ is strictly upper triangular for all }0 \le k \le n - 1, \\
\text{ and hence $R(v)_{i,i+1}=0$  for all $1 \le i < d$ and $v\in[\n,\n]$}.
\end{split}
\end{equation}

On the other hand we know, from \cite[Lemma 2.1]{CS},
 that for every $1 \le i \le d$ there is some
$y\in\g$ such that
\begin{equation}\label{eq.01}
 R(y)_{i,i+1}\ne 0.
 \end{equation}
 This, combined with
 \eqref{eq.0} and \eqref{eq.1}, imply that
\begin{equation}\label{eq.2}
 \text{if $D_{i,i}\ne D_{i+1,i+1}$ then $R(v)_{i,i+1}\ne 0$ for some $v\in\p$.}
 \end{equation}

\medskip

\noindent
\textsl{Step 1.} If $D_{i,i}\ne D_{i+1,i+1}$ then
$D_{i,i}- D_{i+1,i+1}=\lambda$ and  $(E_0)_{i,i+1}\ne 0$.

Indeed,  since $T$ is a representation,
it follows from \eqref{eq.x_v} that,
for  $1\le i < d$,
\begin{equation}\label{eq.3}
 (\ad_{\gl(d)} D - \lambda)^{k} E_0 =
\begin{cases}
E_k,&\text{if $0 \le k < n$;} \\
0,&\text{if $k=n$.}
\end{cases}
 \end{equation}
Since $D$ is upper triangular and $E_0$ is strictly upper triangular, this implies that, for  $1\le i < d$,
\begin{equation}\label{eq.4}
(D_{i,i} - D_{i+1,i+1}- \lambda)^k(E_0)_{i,i+1} =
\begin{cases}
(E_k)_{i,i+1},&\text{if $0 \le k < n$;} \\
0,&\text{if $k=n$.}
\end{cases}
 \end{equation}
Now, if  $D_{i,i}\ne D_{i+1,i+1}$ then it follows from
\eqref{eq.2} and \eqref{eq.4} that
 $(E_0)_{i,i+1}\ne 0$ and case $k=n$ in
\eqref{eq.4} implies
$D_{i,i}- D_{i+1,i+1}=\lambda$.

\medskip

\noindent
\textsl{Step 2.} For some integer $\ell\geq 0$, there is a unique
sequence $\vec d=(d_1,\dots,d_{\ell+1})$ of positive integers,
with  $d=|\vec d|$, such that
\[
D = D_1 \oplus \cdots \oplus D_{\ell+1},\quad D_i\in \gl(d_i),
\]
where each $D_i$ has scalar diagonal of scalar
$\alpha_i = \alpha - (i - 1)\lambda$ for some $\alpha\in\F$.

This follows at once from \eqref{eq.1} and Step 1, uniqueness
is a consequence of the arrangement of the eigenvalues of $D$.

\medskip

\noindent
\textsl{Step 3.} According to the  block structure of $\gl(d)$
given by $\vec d$,  $p_{r,r}(E_k)=0$ for all
$1\le r \le \ell+1$ and $0\le k\le n-1$.

Indeed, setting $U^j = \text{span}\{u_1 , \dots , u_j \}$ (each $U^j$ is a $\g$-submodule of $U$), we have to
show that the endomorphism induced by $E_k$, say $\bar E_k$, in
\[
\bar U^{r}=U^{d_1+\cdots+d_r}/U^{d_1+\cdots+d_{r-1}}
\]
 is zero.
On the one hand, the endomorphism induced by $\ad_{\gl(d)} D$
in $\gl(\bar U^{r})$ is nilpotent.
On the other hand, it follows from \eqref{eq.3} that $\bar E_k$
is a generalized eigenvector of eigenvalue
$\lambda$ of  the endomorphism induced by $\ad_{\gl(d)} D$.
Since $\lambda\ne0$ this is a contradiction.

\medskip

\noindent
\textsl{Step 4.} According to the  block structure of $\gl(d)$
given by $\vec d$, if $1\le i <j\le \ell+1$  and $j\ne i+1$, then
$p_{i,j}(E_k)=0$ for all $0\le k\le n-1$.

The proof of this uses the same argument used in the proof of Step 3.
The point is that $p_{i,j}(E_k)$ corresponds to an eigenvector
of eigenvalue $(j-i)\lambda$ of $\ad_{\gl(d)} D$ and, if $j-i\ne1$,
\eqref{eq.3} implies that $p_{i,j}(E_k)$ must be zero.

\medskip

\noindent
\textsl{Step 5.}
Let $\alpha$ as in Step 2. We may assume that $D$ is in Jordan form
 \[
D=J^{d_1}(\al)\oplus J^{d_2}(\al-\la)\oplus\cdots\oplus J^{d_{\ell+1}}(\al- \ell\la).
\]
Moreover, $\ell\ge 1$ and if
$\n^{k-1}$ is not contained in $\ker(T)$, then $\ell\geq k$.

Indeed, by \eqref{eq.01} and Step 3,
the first superdiagonal of every $D_i$
consists entirely of non-zero entries.
Thus, for each $1 \le i \le \ell+1$, there is $P_i \in GL(d_i )$
such that
\[
P_i D_iP_i^{-1} = J^{d_i}(\al-(i-1)\la).
\]
Set $P = P_1 \oplus\cdots\oplus P_{\ell+1}\in GL(d)$,
then $PDP^{-1}$ is as stated and and $PE_k P^{-1}$ is still strictly block upper triangular with
$p_{i,j}(PE_k P^{-1})=0$ if $1\le i \le j\le \ell+1$ and $j-i\ne 1$.
Since $\n^{k-1}$ is obtained by bracketing elements of $\p$,
it follows from Step 3 that, if  $\ell<k$, then
$\n^{k-1}\subset\ker(T)$.
In particular, since $\ker(T)\cap\p= 0$, we have $\ell\ge1$.

\medskip

\noindent
\textsl{Step 6.}
For all $1\le i \le \ell$, $d_i+d_{i+1}\leq n+1$ and
the equality holds for some $i$.

Indeed, from Step 1 we know that $(E_0)_{d_i,d_i+1}\ne 0$ for all $i$.
If $d_i+d_{i+1}>n+1$, for some $i$, it follows from the Clebsh-Gordan decomposition of the tensor product of irreducible representations of $\sl(2)$ that
$(\ad_{\gl(d)} D - \lambda 1_{\gl(d)})^{n} E_0 \ne 0$, contradicting \eqref{eq.3}
(for the details, see \cite[Proposition 2.2]{CPS}).
On the other hand,
if $d_i+d_{i+1}<n+1$ for all $i$ then Clebsh-Gordan implies that
$E_n=(\ad_{\gl(d)} D - \lambda 1_{\gl(d)})^{n-1}E_0 = 0$, which is impossible
since $\ker(T)\cap\p= 0$.

\medskip

\noindent
\textsl{Final Step.} We may assume $E_0=\tiny\left(
               \begin{matrix}
                 0 & S(1)  & \dots & 0 \\
                 \vdots  & \ddots & \ddots & \vdots\\
                 0  &  & \ddots & S(\ell) \\[1mm]
                 0  & 0 & \dots & 0 \\
               \end{matrix}
             \right)$,
for a unique normalized sequence $S=(S(1),\dots,S(\ell))$.

Indeed, it follows from Step 3 and 4 that $E_0=E(S)$ for some sequence
as in \eqref{eq.Condition_S}. It follows from Proposition \ref{prop.normalized_h} that there is a unique normalized sequence $S=(S(1),\dots,S(\ell))$ and an invertible matrix
$P=P_1\oplus\cdots\oplus P_{\ell+1}\in  GL(d)$, with
$P_i$ a polynomial in $J^{d_i}(0)$ (and thus commuting with $D$),
such that
$PE_0P^{-1}=E(S)$.
\end{proof}

Combining Propositions \ref{prop.normal2} and \ref{prop.ensu} we
obtain the classification of all uniserial $\g_{n,\lambda}$-modules
for $\lambda\ne0$ over an algebraically closed field of characteristic zero.

\begin{theorem}\label{thm.main1} Let $\lambda\neq 0$. Every finite dimensional uniserial representation $T:\g_{n,\la}\to\gl(U)$ satisfying
$\ker(T)\cap V= 0$ is isomorphic to one and
only one normalized representation  $R_{\vec d,\alpha,S}$
with $\vec d=(d_1,\dots,d_{\ell+1})$
satisfying $|\vec d| =\dim U$ and
\[
 \max\{d_i+d_{i+1}:1\le i\le \ell\}=n+1.
\]
\end{theorem}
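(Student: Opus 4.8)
The plan is to obtain Theorem \ref{thm.main1} by combining the normal form furnished by Proposition \ref{prop.ensu} with the rigidity of normalized representations recorded in Proposition \ref{prop.normal2}: existence of a normalized $R_{\vec d,\alpha,S}$ isomorphic to $T$ will come from the former, and uniqueness from the latter.

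The first step is to bring $T$ into the situation covered by Proposition \ref{prop.ensu}, whose hypotheses ask that the ideal playing the role of $\n$ be solvable. This is the one point requiring care, because $\L(V)$ is free and hence neither solvable nor finite dimensional. I would proceed as follows. A finite dimensional uniserial $T$ carries a complete chain of submodules, so in an adapted basis all $T(y)$ are upper triangular; since $\la\neq0$, the operator $x$ acts invertibly on $V$, whence $V=[x,V]\subseteq[\g_{n,\la},\g_{n,\la}]$, and therefore $T(V)$ consists of strictly upper triangular matrices. Consequently $T(\L(V))$, being generated by $T(V)$, is nilpotent, so $\L(V)^{m}\subseteq\ker(T)$ for some $m$ and $T$ factors through $\g_{n,\la,m}=\langle x\rangle\ltimes\N_m(V)$. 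Now $\N_m(V)$ is nilpotent, hence solvable, and $V$ is an $x$-invariant complement of $[\N_m(V),\N_m(V)]$ on which $x$ acts through the single Jordan block $J_n(\la)$; together with $\ker(T)\cap V=0$ this places the induced representation exactly under the hypotheses of Proposition \ref{prop.ensu} (with $\n=\N_m(V)$ and $\p=V$).

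Applying Proposition \ref{prop.ensu} then produces a basis $\B$ of $U$, a scalar $\alpha$, a sequence $\vec d=(d_1,\dots,d_{\ell+1})$ with $\max\{d_i+d_{i+1}:1\le i\le\ell\}=n+1$, and a normalized sequence $S$ such that the matrix representation $R$ of $T$ relative to $\B$ satisfies $R(x)=D(\alpha,\la)$ and $R(v_0)=E(S)$; that is, $R$ agrees with $R_{\vec d,\alpha,S}$ on $x$ and on $v_0$. The relations $[x,v_j]=\la v_j+v_{j+1}$ show that $x$ and $v_0$ generate $\g_{n,\la}$ as a Lie algebra, and since both $R$ and $R_{\vec d,\alpha,S}$ are homomorphisms agreeing on these generators they must coincide. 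Hence $T\cong R_{\vec d,\alpha,S}$ with $|\vec d|=\dim U$, which is the existence assertion.

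For uniqueness, if $T$ were isomorphic to two normalized representations $R_{\vec d,\alpha,S}$ and $R_{\vec d',\alpha',S'}$, these would be isomorphic to one another, and Proposition \ref{prop.normal2} (the normalized representations of $\g_{n,\la}$ are pairwise non-isomorphic for $\la\neq0$) forces $(\vec d,\alpha,S)=(\vec d',\alpha',S')$. I expect the only genuinely delicate step to be the reduction carried out in the second paragraph --- establishing that a uniserial $T$ acts nilpotently on $\L(V)$ and so descends to a representation of $\g_{n,\la,m}$, which is what makes Proposition \ref{prop.ensu} applicable. Once that reduction is in place, the theorem is simply the assembly of the two cited propositions together with the remark that $x$ and $v_0$ generate $\g_{n,\la}$.
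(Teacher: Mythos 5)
Your overall architecture coincides with the paper's: Theorem \ref{thm.main1} is obtained there by combining Proposition \ref{prop.ensu} (existence of the normal form) with Proposition \ref{prop.normal2} (uniqueness), and your observation that $x$ and $v_0$ generate $\g_{n,\la}$, so that two homomorphisms agreeing on $R(x)=D(\al,\la)$ and $R(v_0)=E(S)$ must coincide, correctly assembles the pieces. The gap is in the reduction you yourself single out as the delicate point. You assert that a finite dimensional uniserial module ``carries a complete chain of submodules, so in an adapted basis all $T(y)$ are upper triangular.'' Uniseriality only says that the submodules form a chain; the composition factors are irreducible but need not be one-dimensional, so the unique composition series need not be a complete flag of subspaces. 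Triangularizability is (by Lie's theorem, over an algebraically closed field of characteristic $0$) equivalent to solvability of the image $T(\g_{n,\la})$ --- which is exactly what your reduction is trying to prove: you use triangularity to conclude that $T(V)$ is strictly upper triangular and hence that $T(\L(V))$ is nilpotent, and you need that nilpotency to get solvability and hence triangularity. The argument is circular. That the danger is genuine is shown by uniserial modules over non-solvable algebras such as $\sl(2)\ltimes V(m)$ (see \cite{CS_JofAlg}), whose unique composition series has factors of dimension greater than one; for such modules no adapted triangular basis exists.

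Fortunately the repair is short and needs neither uniseriality nor triangularization. Set $\h'=T(\g_{n,\la})$, $D=T(x)$ and $E_j=T(v_j)$. The relations $[D,E_j]=\la E_j+E_{j+1}$, $E_n=0$, give $(\ad D-\la)^nE_0=0$, so every $E_j$ lies in the generalized $\la$-eigenspace of $\ad D$ on the finite dimensional algebra $\h'$. Since $(\ad D-(\mu+\nu))^N[a,b]=\sum_i\binom{N}{i}[(\ad D-\mu)^ia,(\ad D-\nu)^{N-i}b]$, any $k$-fold bracket of the $E_j$ lies in the generalized $k\la$-eigenspace; as $\la\neq0$ and $\mathrm{char}\,\F=0$, the scalars $k\la$ are pairwise distinct, and only finitely many of them can be eigenvalues of $\ad D$ on $\h'$. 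Hence all sufficiently long brackets vanish, $T(\L(V))$ is nilpotent, $\L(V)^m\subseteq\ker(T)$ for some $m$, and $T$ factors through $\g_{n,\la,m}$, where $\n=\N_m(V)$ is nilpotent (hence solvable) and $V$ is an $x$-invariant complement of $[\n,\n]$, so Proposition \ref{prop.ensu} applies verbatim. With this substitution the remainder of your proof goes through; it is worth noting that the paper leaves this reduction implicit, its stated proof being precisely the one-line combination of Propositions \ref{prop.ensu} and \ref{prop.normal2}.
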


\begin{remark}
The integer $\ell$ is determined by $T$, more precisely by
$\ker(T)$ as it will be shown in Theorem \ref{thm.main3}.
\end{remark}

\subsection{Relatively faithful representations of the Lie algebra
\texorpdfstring{$\g_{n,\lambda,\ell}$}{gnl,l}}\label{subsec.rel_faithful}

Recall that given an integer $\ell \ge 1$
and a vector space $V$ of dimension $n\geq 2$,
$\N_\ell(V)$ is the free $\ell$-step nilpotent Lie algebra associated to $V$,
and that
$\g_{n,\lambda,\ell}=\langle x\rangle\ltimes \N_\ell(V)$ is the semidirect product where $x$ acts on $V$ via a single Jordan block $J_n(\la)$
(and its action is extended to $\N_\ell(V)$ so that $x$ becomes a Lie algebra derivation).
In this section we will use
Theorem \ref{thm.main2}
to give a complete classification of all uniserial representations of the solvable Lie algebra $\g_{n,\lambda,\ell}$ for $\lambda\ne 0$.

\begin{definition}
We say that a representation $T:\g_{n,\lambda,\ell}\to\gl(U)$ is
\emph{relatively faithful} if $\ker(T)\cap V= 0$ and
$\N_\ell(V)^{\ell-1}\not\subset\ker(T)$ (that is, there is $Z$ in the center of
$\N_\ell(V)$ such that $T(Z)\ne0$).
\end{definition}

In order to classify all uniserial representations of $\g_{n,\lambda,\ell}$
it suffices to classify those that are relatively faithful.
Indeed, let $T:\g_{n,\lambda,\ell}\to\gl(U)$ be a uniserial representation. If $V\subseteq \ker(T)$
then $T$ is determined by a uniserial representation $\langle x\rangle\to\gl(U)$.
The Jordan normal form suffices to classify such representations. We may thus assume
without loss of generality that $V$ is not contained in $\ker(T)$.
Now, if $0\neq \ker(T)\cap V\neq V$, then $T$ is determined by a uniserial representation ${T'}:\g_{n',\lambda,\ell}\to\gl(U)$, where
$\g_{n',\lambda,\ell}
=\langle x\rangle\ltimes \N_\ell(V')$, $V'$ is a factor of $V$ by an $x$-invariant subspace, $x$ acts on $V'$ via an invertible Jordan block $J_{n'}(\la)$, $1\leq n'<n$, and $\ker(T')\cap V'= 0$. Hence, we may assume without loss of generality that $\ker(T)\cap V= 0$.
Let $1\le \ell_0 \le \ell$ be the smallest positive integer such that $\N_\ell(V)^{\ell_0-1}$ is not contained in $\ker(T)$.
Then $T$ is determined by a relatively faithful uniserial
representation $T':\g_{n,\lambda,\ell_0}\to\gl(U)$.

In order to state the classification we introduce the following notation.
\begin{notation}
Given sequences
$\vec d=(d_1,\dots,d_k)$ and $S=(S(1),\dots, S(k-1))$,
we sat that the pair $\vec d$, $S$
is of \emph{extreme type} if
\begin{enumerate}
    \item $\vec d$ is odd-symmetric with $d_1=d_k=1$ and
    \item $S$ is $\phi$-invariant.
\end{enumerate}
\end{notation}

\begin{theorem}\label{thm.main3} Let $n>1$, $\ell\ge 1$ and suppose $\lambda\ne0$.
Then any finite dimensional relatively faithful uniserial
representation $T:\g_{n,\la,\ell}\to\gl(U)$
is isomorphic to one and only one normalized representation
$\bar R_{\vec d,\alpha,S}$
with $\vec d=(d_1,\dots,d_k)$ and $S=(S(1),\dots, S(k-1))$ satisfying:
\begin{enumerate}
    \item $|\vec d| =\dim U$,
    \item  $\max\{d_i+d_{i+1}:1\le i <k\}=n+1$,
     \item if $\ell$ is even then $k=\ell+1$ and the pair  $\vec d,S$ is not of extreme type,
     \item if $\ell$ is odd then either $k=\ell+1$ or we have:
     $k=\ell+2$ and the pair  $\vec d,S$ is of extreme type.
\end{enumerate}
\end{theorem}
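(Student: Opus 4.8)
The plan is to leverage the heavy machinery already established—chiefly Proposition~\ref{prop.ensu} and Theorem~\ref{thm.main2}—and to show that classifying relatively faithful uniserial representations of $\g_{n,\la,\ell}$ amounts to classifying exactly those normalized representations $R_{\vec d,\al,S}$ of $\g_{n,\la}$ that factor through the quotient $\g_{n,\la,\ell}$ in a relatively faithful manner. First I would invoke Proposition~\ref{prop.ensu} with $\n=\N_\ell(V)$ and $\p=V$: since $T$ is uniserial with $\ker(T)\cap V=0$, there is a basis $\B$, a unique scalar $\al$, a unique sequence $\vec d=(d_1,\dots,d_k)$ with $|\vec d|=\dim U$ and $\max\{d_i+d_{i+1}\}=n+1$, and a unique normalized $S$ so that the associated matrix representation $R$ satisfies \eqref{s1} and \eqref{s2}; thus $R=\bar R_{\vec d,\al,S}$ and conditions (1) and (2) hold automatically. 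Uniqueness of the normalized data will follow from Proposition~\ref{prop.normal2}. This reduces the theorem to pinning down which $k$ can occur, i.e.\ proving conditions (3) and (4).

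\smallskip

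For the constraint on $k$, the key is the final clause of Proposition~\ref{prop.ensu}: if $\n^{k'-1}\not\subset\ker(T)$ then $\ell'\ge k'$ in its notation. Here relative faithfulness means precisely that $\N_\ell(V)^{\ell-1}\not\subset\ker(T)$, and since $\bar R$ is the pass-to-quotient of $R_{\vec d,\al,S}$, the image $R(\N_\ell(V)^{j-1})$ is exactly the $j$-th term of the lower central series of $\n(S)$. Therefore $\bar R$ is relatively faithful if and only if the nilpotency degree of $\n(S)$ is at least $\ell$, while the factoring condition $\L(V)^\ell\subset\ker(R_{\vec d,\al,S})$ forces the nilpotency degree of $\n(S)$ to be at most $\ell$. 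Hence the relevant representations are exactly those for which the nilpotency degree of $\n(S)$ equals $\ell$. Now I would apply Theorem~\ref{thm.main2} directly: the nilpotency degree of $\n(S)$ equals its generic value $k-1$ unless $\vec d$ is odd-symmetric with $d_1=d_k=1$ and $T$ (equivalently $S$, after normalization) is $\phi$-invariant, in which case the degree drops to $k-2$.

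\smallskip

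The case analysis then splits on the parity of $\ell$. In the generic (non-extreme-type) situation the nilpotency degree is $k-1$, so requiring it to equal $\ell$ forces $k=\ell+1$; this covers condition (3) when $\ell$ is even and the first alternative in condition (4) when $\ell$ is odd. In the exceptional extreme-type situation—$\vec d$ odd-symmetric with $d_1=d_k=1$ and $S$ $\phi$-invariant—Theorem~\ref{thm.main2} tells us $k$ is odd and the nilpotency degree is $k-2$; setting $k-2=\ell$ gives $k=\ell+2$ with $k$ odd, i.e.\ $\ell$ odd. This is precisely the second alternative in condition (4), and it explains why no extreme-type possibility arises when $\ell$ is even: an even $\ell$ would demand an odd $k=\ell+2$ but simultaneously the extreme-type hypothesis already forces $k$ odd and $k-2=\ell$ even, consistent only with $\ell$ odd, so the extreme-type case is excluded exactly when $\ell$ is even. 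Conversely, I would check that every such $\bar R_{\vec d,\al,S}$ is genuinely relatively faithful and uniserial: uniseriality is noted in \S\ref{subsec.R}, and relative faithfulness follows because the nilpotency degree equaling $\ell$ means $R(\N_\ell(V)^{\ell-1})=\n(S)^{\ell-1}\ne0$.

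\smallskip

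The main obstacle I anticipate is the careful bookkeeping identifying $R(\N_\ell(V)^{j-1})$ with the lower central series $\n(S)^{j-1}$ and confirming that ``nilpotency degree equals $\ell$'' is the exact translation of ``relatively faithful and factors through $\g_{n,\la,\ell}$''; this is where one must be precise about the truncation $\L(V)^\ell\subset\ker$ versus the non-vanishing of the center. Once that dictionary is in place, the rest is a direct and clean application of Theorem~\ref{thm.main2} together with the parity constraint inherent to odd-symmetric sequences.
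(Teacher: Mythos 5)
Your proposal is correct and follows essentially the same route as the paper: reduce to a normalized $\bar R_{\vec d,\al,S}$ (the paper lifts $T$ to a uniserial representation of $\g_{n,\la}$ and cites Theorem~\ref{thm.main1}, while you invoke Proposition~\ref{prop.ensu} directly with $\n=\N_\ell(V)$ — the same machinery), get uniqueness from Proposition~\ref{prop.normal2}, translate relative faithfulness plus factoring through $\g_{n,\la,\ell}$ into ``the nilpotency degree of $\n(S)$ equals $\ell$,'' and conclude (3) and (4) from Theorem~\ref{thm.main2} with the parity observation that extreme type forces $k$ odd. Your explicit dictionary $R(\N_\ell(V)^{j-1})=\n(S)^{j-1}$ and the parity bookkeeping are exactly what the paper's terser proof leaves implicit (note only that the degenerate case $\vec d=(1,\dots,1)$ of Theorem~\ref{thm.main2} is excluded by $n>1$ together with condition (2), a point both you and the paper pass over silently).
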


\begin{proof} Let $T:\g_{n,\la,\ell}\to\gl(U)$ be a relatively faithful uniserial representation of dimension $d$.
We may extend $T$ obtaining a uniserial representation
$T':\g_{n,\la}\to\gl(U)$ of
$\g_{n,\la}$ such that $\ker(T)\cap V\ne 0$.
It follows from Theorem \ref{thm.main1} that $T'$
is isomorphic, as a representation of $\g_{n,\la}$,
to one and
only one normalized representation  $R_{\vec d,\alpha,S}$
with $\vec d$ satisfying $|\vec d| =\dim U$ and $\max\{d_i+d_{i+1}:1\le i <k\}=n+1$.
Therefore $T$, the original representation of $\g_{n,\la,\ell}$,
is isomorphic to  $\bar R_{\vec d,\alpha,S}$.
Uniqueness follows from Proposition \ref{prop.normal2}.

Since $T'$ is obtained from $T$, we must have  $\L(V)^{\ell}\subset\ker(T')$ and
since $T$ is  relatively faithful, we have
$\L(V)^{\ell-1}\not\subset\ker(T')$.
Therefore, the nilpotency degree of
$\h(\alpha, \lambda,S)$, the image of $T$, is $\ell$.
Now (3) and (4) follow from Theorem \ref{thm.main2}.
\end{proof}

\section{Questions}
We point out that the classification of all 
uniserial representations of the \emph{nilpotent} Lie algebra 
$\g_{n,\la,\ell}$  remains incomplete for $\lambda=0$. 
Also, we know very little about the uniserials of $\g_{n,\la,\ell}$  in positive characteristic, see for instance Question \ref{ques1}. 

 Let $\g$, $\n$ and $\p$ be as in Proposition \ref{prop.ensu}, and suppose that $\g$ is generated by $x$ and $\p$ as Lie algebra.
When will (\ref{s1}) and (\ref{s2}) extend to a representation of $\g$? This extension is clearly unique, if it exists.
We are mostly interested in the case when $\n$ is nilpotent and $\n^q= 0$. Since $\n$ need not be free $\ell$-step nilpotent, the
answer will depend on the actual sequences $\vec d$ and $S$, so that the definition relations of $\n$
will be satisfied. Some positive and negative cases can be found using the Heisenberg Lie algebra $\n=\h(2m+1)$
and in \cite{CGS2}. There may be some $\n$ so that no extension exists for any choice of  $\vec d$ and $S$.



\begin{thebibliography}{RBMW}


\bibitem[ARS]{ARS} M. Auslander, I. Reiten and S.O. Smal\o,
Representation Theory of Artin Algebras,
Cambridge Univ. Press, Cambridge, New York, Melbourne 1995.

\bibitem[BH-Z]{BH-Z} W. D. Burgess and B. Huisgen-Zimmermann,
\emph{Approximating modules by modules of finite projective
dimension}, J. Algebra 178 (1995) 48--91.

\bibitem[C]{C} P. Casati, \emph{The classification of the perfect cyclic $\sl(n + 1)\ltimes {\mathbb C}^{n+1}$-modules}, J. Algebra 476
(2017) 311--343.



\bibitem[CPS]{CPS} P. Casati, A. Previtali and F. Szechtman, \emph{Indecomposable modules of a family of solvable Lie algebras},
Linear Algebra Appl. 531 (2017) 423--446.


\bibitem[CMS]{CMS} P. Casati, S. Minniti and V. Salari, \emph{Indecomposable representations of the Diamond Lie algebra},
J. Math. Phys. 51 (2010), 033515--033515.



\bibitem[CS1]{CS_JofAlg} L. Cagliero and F. Szechtman,
\emph{The classification of uniserial $\sl(2)\ltimes V(m)$-modules and a new interpretation of the Racah–Wigner $6j$–symbol},
 J. Algebra 386 (2013) 142--175.

\bibitem[CS]{CS} L. Cagliero and F. Szechtman,
\emph{Indecomposable modules of 2-step solvable Lie algebras in arbitrary characteristic},
Comm. Algebra 44 (2016) 1--10.

\bibitem[CGS1]{CGS1} L. Cagliero, L. Guti\'errez Frez and F. Szechtman,
\emph{Uniserial representations and free 2-step nilpotent Lie algebras},
Comm. Algebra, 46 (2018) 2990--3005.



\bibitem[CGS2]{CGS2} 
 L. Cagliero, L. Gutierrez and F. Szechtman, 
 \emph{Classification of finite dimensional uniserial representa
tions of conformal Galilei algebras}, 
Journal of Mathematical Physics 57 (2016) 101706.


\bibitem[Co]{Co} C.H. Conley,
\emph{Quantizations of modules of differential operators},
Contemp. Math. 490 (2009) 61--81.

\bibitem[DdG]{DdG} A. Douglas and H. de Guise, 
\emph{Some nonunitary, indecomposable representations of the Euclidean
algebra $\mathfrak{e}(3)$}.
J. Phys. A Math. Theory (2010) 43:085204.

\bibitem[DR]{DR} A. Douglas and J. Repka, 
\emph{Indecomposable representations of the Euclidean algebra 
$\mathfrak{e}(3)$ from irreducible
representations of $\mathfrak{sp}(4)$}, 
Bull. Aust. Math. Soc. 83 (2011) 439--449.

\bibitem[DP]{DP} A. Douglas and A. Premat, \emph{A class of nonunitary, finite dimensional representations of the
euclidean algebra ${\mathfrak e}(2)$}, Comm. Algebra 35 (2007) 1433–-1448.

\bibitem[J]{J} H. P. Jakobsen, \emph{Indecomposable finite-dimensional representations of a class of Lie algebras
and Lie superalgebras}, Supersymmetry in mathematics and physics, 125–138, Lecture Notes in
Math., 2027, Springer, Heidelberg, 2011.

\bibitem[Na]{Na} T. Nakayama,
\emph{On Frobeniusean algebras II},
Ann. of Math. 42 (1941) 1--21.

\bibitem[NGB]{NGB} Z. Nazemian, A. Ghorbani and  M. Behboodi,
\emph{Uniserial dimension of modules},
J. Algebra 399 (2014) 894--903.


\bibitem[H-Z]{H-Z} B. Huisgen-Zimmermann, \emph{The geometry of uniserial representations of finite dimensional algebras I},
J. Pure Appl. Algebra  127  (1998) 39--72.

\bibitem[H-Z2]{H-Z2} B. Huisgen-Zimmermann, {\em The geometry of
uniserial representations of finite dimensional algebras. III:
Finite uniserial type}, Trans. Amer. Math.
Soc. 348 (1996) 4775--4812.



\bibitem[RdG]{RdG} J. Repka and H. de Guise, \emph{Some finite-dimensional indecomposable representations of $E(2)$},
J. Math. Phys. 40 (1999) 6087--6109.



\end{thebibliography}
\end{document}